\newtheorem{theorem}{Theorem}[section]
\newtheorem{proposition}{Proposition}[section]
\newtheorem{lemma}[theorem]{Lemma}
\newtheorem{corollary}[theorem]{Corollary}
\newtheorem*{theorem*}{Theorem}
\theoremstyle{definition}
\newtheorem{definition}{Definition}[section]
\newtheorem{example}[theorem]{Example}
\newtheorem{remark}[theorem]{Remark}
\numberwithin{equation}{section}
\date{}
\def \C {{\mathbb C}}
\def \C {{\mathbb {C}}}
\newcommand{\psiinf}{\psi_{\infty}}
\newcommand{\norm}[1]{\left\lVert#1\right\rVert}
\begin{document}

\title[Fixed points and Holomorphic Structures]{Fixed points and Holomorphic Structures on Line Bundles over the Quantum Projective Line}

\author[I. Biswas]{Indranil Biswas}
\address{Department of Mathematics, Shiv Nadar University, Dadri 201314, Uttar Pradesh, India.}
\email{indranil.biswas@snu.edu.in, inrdranil29@gmail.com}

\author[S. Guin]{Satyajit Guin}
\address{Department of Mathematics and Statistics, IIT Kanpur, Kanpur 208016, Uttar Pradesh, India.}
\email{sguin@iitk.ac.in}

\author[P. Kumar]{Pradip Kumar}
\address{Department of Mathematics, Shiv Nadar University, Dadri 201314, Uttar Pradesh, India.}
\email{pradip.kumar@snu.edu.in}

\subjclass[2020]{58B34, 58B32, 14A22, 14C22}

\keywords{Noncommutative geometry, complex structure, holomorphic structure,\\ quantum projective line, Picard group}

\begin{abstract}

It has recently been observed that, in contrast to the classical case, holomorphic structures on line bundles over the quantum projective line are not uniquely
determined by degree. We formulate a fixed--point--theoretic framework for the analysis of flat $\overline\partial$-connections that define holomorphic structures on line bundles over the quantum projective line. Within this framework, we relate the existence of invertible solutions to the gauge equation associated with holomorphic structures precisely to the existence of fixed points, lying in the open unit ball, of certain nonlinear maps acting on an appropriate Banach space.

\end{abstract}
\maketitle


\section{Introduction}

Noncommutative geometry developed by Connes \cite{Connes} is a far-reaching generalization of classical differential geometry, which enables mathematical analysis of spaces beyond the scope of classical methods. Within this framework, the notion of a {\em spectral triple} plays a fundamental role, extending the concept of 
a Riemannian manifold to the noncommutative setting. Over the past three decades, substantial progress has been made in understanding the geometry of noncommutative spaces, leading to a wealth of applications, particularly in mathematical physics. By contrast, the development of noncommutative complex geometry remains comparatively 
nascent. To the best of our knowledge, this line of inquiry began with the work of Connes and Cuntz \cite{Connes-Cuntz}, which outlined a possible approach to the idea of a complex structure in noncommutative geometry, based on the notion of positive Hochschild cocycle on an involutive algebra. Subsequently, Fr\"ohlich, Grandjean, and Recknagel \cite{MR1695097} introduced an alternative perspective inspired by Witten's supersymmetric approach to Morse inequalities \cite{MR683171}. The paradigmatic 
example of the noncommutative torus has been studied by Polishchuk and Schwarz \cite{Polishchuk-Schwarz}, and later was systematically developed by Polishchuk in detail \cite{Polishchuk}, \cite{Polishchuk2006AnaloguesExponentialMap} (see also \cite{satyajit} for noncommutative $2n$-torus). Beyond the torus, numerous authors have  investigated a variety of examples of `noncommutative complex manifolds' arising from quantum homogeneous spaces; see, for instance, \cite{Beggs-Smith}, 
\cite{DAndreya-Dabrowski}, \cite{DAndreaLandi2013GeometryQuantumProjectiveSpaces}, \cite{KhalkhaliLandiVanSuijlekom2011}, \cite{KhalkhaliMoatadelro2011NCComplexGeometryQPS}, \cite{Buachalla}
and references therein. Research in this direction remains active and continues to develop rapidly.

In this article, we focus on the quantum projective line $\mathbb{C}\mathrm{P}_q^1$ and on \emph{holomorphic} line bundles over it, which in many aspects parallel the situation of the classical Riemann sphere. From topological point of view, the coordinate algebra $\mathcal{A}(\mathbb{C}\mathrm{P}_q^1)$ of the quantum projective line coincides with that of the standard Podle\'s sphere $S_q^2$ \cite{Podles1987QuantumSpheres}. The noncommutative differential geometry of the standard Podle\'s sphere has been extensively studied from a variety of perspectives; among these the works (\cite{AguilarKaad2018PodlesSpectralMetric}, \cite{Dabrowski-Sitarz}, \cite{Neshveyev}, \cite{Schmudgen-Wagner}) are particularly relevant for the present work. Khalkhali, Landi, and van Suijlekom initiated a systematic study of complex geometry on the standard Podleś sphere in \cite{KhalkhaliLandiVanSuijlekom2011}. Recall that in the quantum setting a vector bundle is understood as a finitely generated projective (left) module over the algebra, and a holomorphic structure on it is induced by a flat $\overline\partial$-connection. Starting from a natural algebraic complex structure already implicit in earlier work of Majid \cite{Majid}, they lifted this structure to the analytic level of holomorphic functions and holomorphic sections. In particular, they equipped line bundles over the quantum projective line with canonical holomorphic structures and showed that many features of the classical complex geometry of the complex projective line $\mathbb{C}\mathrm{P}^1$ persist under $q$-deformation.

Very recently, a surprising phenomenon was discovered in \cite{GravemanLaRueMacArthurPesinWei2025NonStandardHolomorphic}. There, the authors show that on each holomorphic line bundle $\mathcal{L}_n$ over $\mathcal{A}(\mathbb{C}\mathrm{P}_q^1)$, there exist not just one but at least countably many distinct gauge-equivalence classes of flat $\overline\partial$-connections. In other words, holomorphic structures on line bundles over $\mathbb{C}\mathrm{P}_q^1$ are not uniquely determined (up to gauge equivalence) by degree. This phenomenon is genuinely quantum and has no classical analogue. To better appreciate this distinction, it is instructive to recall the classical situation of $\mathbb{C}\mathrm{P}^1$. Two holomorphic structures on a line bundle $\mathcal{O}(n)$ over $\mathbb{C}\mathrm{P}^1$ are gauge equivalent if the gauge equation
$\overline\partial g\,=\,g\overline\partial f-\overline\partial h\,.\,g$ admits an invertible solution $g$, where $f,\,h\,\in\, C^\infty(\mathbb{C}\mathrm{P}^1)$ are associated with the given $\bar\partial$-connections. This equation always admits an invertible solution, namely $g=\exp(f-h)$. Consequently, the holomorphic structure on $\mathcal{O}(n)$ is uniquely determined. This fundamental fact underlies the classical result that the Picard group of $\mathbb{C}\mathrm{P}^1$ is $\mathbb{Z}$. By contrast, in the quantum case the main result of \cite{GravemanLaRueMacArthurPesinWei2025NonStandardHolomorphic} shows that there exist multiple holomorphic structures on each line bundle $\mathcal{L}_n$ over $\mathcal{A}(\mathbb{C}\mathrm{P}_q^1)$. We emphasize a crucial point: the existence of solutions to the gauge equation is not the only difficulty; rather, the more subtle issue concerns the \emph{invertibility} of such solutions. In the quantum setting, this becomes more challenging because the solutions are no longer functions, as in the classical case, but elements of a noncommutative algebra. To indicate the level of complexity, it turns out that holomorphic structures corresponding to $f$ and $\alpha f$ need not always be gauge equivalent for a given $f\in\mathcal{A}(\mathbb{C}\mathrm{P}^1_q)$ and $\alpha\in\mathbb{C}^\times$ (see Lemma \ref{complicacy} and Example \ref{example}); such a situation never occurs in the classical case. Consequently, the complex geometry of $\mathbb{C}\mathrm{P}_q^1$ is significantly more intricate than that of its classical counterpart $\mathbb{C}\mathrm{P}^1$.

The concrete method in \cite{GravemanLaRueMacArthurPesinWei2025NonStandardHolomorphic} relies on restricting attention to a particular commutative unital subalgebra of the $C^*$-algebra $C(S_q^2)$ of the standard Podle\'s sphere. Let $B_0$ denote the standard self-adjoint generator of  $C(S_q^2)$ (see Section~2.3 for the notation), and consider $\mathcal{B}:=C^*\{1,B_0\}\subseteq C(S_q^2)$. Then $\mathcal{B}$ is isomorphic to the algebra of continuous functions on $\sigma(B_0)=\{0\}\cup\{q^{2n}:n\in\mathbb{N}\}$ (see \cite{Podles1987QuantumSpheres}, or \cite[Proposition 4.1]{AguilarKaad2018PodlesSpectralMetric}),
rendering this commutative $C^*$-subalgebra of $C(S_q^2)$ particularly tractable from an analytic perspective. Denote by $\mathrm{Pol}(B_0)$ the unital subalgebra of $\mathcal{B}\cong C(\sigma(B_0))$ consisting of polynomials in $B_0$. Then the restriction of the holomorphic differential $\overline\partial$ on $\mathcal{A}(\mathbb{C}\mathrm{P}_q^1)$ to $\mathrm{Pol}(B_0)$ is well-defined, since $B_0\in\mathcal{A}(\mathbb{C}\mathrm{P}_q^1)$. Then $\overline\partial$ acts, up to the factor $\overline\partial B_0$, analogously to the Jackson derivative (commonly referred to as the $q$-derivative), and one can effectively analyze the gauge equation by restricting attention to $\mathrm{Pol}(B_0)$. This is the theme of \cite{GravemanLaRueMacArthurPesinWei2025NonStandardHolomorphic}. However, these advantages are lost as soon as one moves beyond $\mathrm{Pol}(B_0)$. Outside of this commutative subalgebra, the methods of \cite{GravemanLaRueMacArthurPesinWei2025NonStandardHolomorphic} no longer apply, since the continuous functional calculus cannot be used. This limitation makes it clear that fundamentally new abstract methods are required in order to investigate the gauge equation over $\mathcal{A}(\mathbb{C}\mathrm{P}_q^1)$ and to probe its deeper structural features.

We formulate a fixed--point--theoretic framework to further analyze holomorphic structures on line bundles over the quantum projective line. Our approach begins with the identification of an appropriate Banach algebra $\mathscr{D}$, which turns out to be a linear subspace of the $C^*$-algebra $C(\mathbb{C}\mathrm{P}_q^1)$, but equipped with a certain ``graph norm'' different from the $C^*$-norm induced from $C(\mathbb{C}\mathrm{P}_q^1)$ (see Section $3$). We systematically formulate the gauge and the mixed gauge equation (see Def. \ref{main definition}), and then proceed to the fixed--point formulation of gauge equivalence of holomorphic structures (Section $4$). By applying the Banach fixed--point theorem, we establish a sufficient criterion for the gauge equivalence of holomorphic structures on $\mathcal{L}_n$. By example (see Example \ref{example}), we show how this criterion can be useful in the context of concretely producing many gauge-equivalent holomorphic structures on $\mathcal{L}_n$. Building on this framework, we introduce a one-parameter family of nonlinear maps that act on an appropriate Banach subspace of the Banach algebra $\mathscr{D}$. Finally, we obtain a necessary and sufficient criterion for the gauge equivalence of holomorphic structures on $\mathcal{L}_n$ in terms of the existence of fixed points of these maps (Thm. \ref{main}). This shows that the existence of an invertible solution to the gauge equation is intimately related to a fixed--point problem, highlighting that the situation of $\mathbb{C}\mathrm{P}_q^1$ is considerably more subtle than its classical analogue $\mathbb{C}\mathrm{P}^1$.

The paper is organized as follows. In Section~2, we briefly recall the essential preliminary material. In Section~3, we revisit the D\polhk{a}browski--Sitarz Dirac operator on the standard Podleś sphere and formulate the mixed gauge equation involving the anti-holomorphic differential $\bar\partial$. Section~4 contains the core of the paper, where we formulate the fixed--point--theoretic framework, and obtain a sufficient condition for the gauge equivalence. Finally, in Section~5 we derive, within this framework, a necessary and sufficient criterion for the gauge equivalence of holomorphic structures on $\mathcal{L}_n$ over $\mathcal{A}(\mathbb{C}\mathrm{P}_q^1)$ in terms of the existence of fixed points of certain maps acting on an appropriate Banach space.
\medskip


\section{The Hopf algebras $\mathcal{U}_q(\mathfrak{su}(2)),\,\mathcal{A}(SU_q(2))$, and $\mathbb{C}\mathrm{P}_q^1$}\label{sec:hopf-algebras}

Let $q$ be a real number with $0\,<\,q\,<\,1$. Throughout the article, for any $s\in\mathbb{N}$ the ``$q$-number'' $[s]_q=\frac{q^{-s}-q^s}{q^{-1}-q}$ will be simply denoted by $[s]$. In some formulae, the symbol $X^\pm$ appears. This indicates that the corresponding formula should be understood separately for $X$ and for $X^{-1}$.

In the following, we follow the notation of \cite{Klimyk}.

\subsection{The Hopf algebras $\mathcal{U}_q(\mathfrak{su}(2))\mbox{ and }\mathcal{A}(SU_q(2))$}

The quantum universal enveloping algebra $\mathcal{U}_q(\mathfrak{su}(2))$ is the Hopf $\ast$-algebra generated by $E,\,F,\,K,\,K^{-1}$ with relations $KK^{-1}\,=\,K^{-1}K\,=\,1$ and
\[
K^\pm E\,=\,q^\pm EK^\pm,\quad K^\pm F\,=\,q^\mp FK^\pm,\quad[E,F]\,=\,\frac{K^2-K^{-2}}{q-q^{-1}}.
\]
The $\ast$-structure is given by
\[
K^*\ =\ K,\quad E^*\ =\ F,\quad F^*\ =\ E,
\]
and the Hopf algebra structure is provided by coproduct $\Delta$, antipode $S$, and counit $\varepsilon$:
\[
\Delta(K^\pm)=K^\pm\otimes K^\pm,\quad \Delta(E)=E\otimes K+K^{-1}\otimes E,\quad \Delta(F)=F\otimes K+K^{-1}\otimes F,
\]
\[
S(K)\ =\ K^{-1},\quad S(E)\ =\ -qE,\quad S(F)\ =\ -q^{-1}F,
\]
\[
\varepsilon(K)\ =\ 1,\quad\varepsilon(E)\ =\ \varepsilon(F)\ =\ 0.
\]
On the other hand, the coordinate algebra $\mathcal{A}(SU_q(2))$ of the compact quantum group $SU_q(2)$ of Woronowicz \cite{Woronowicz} is the unital $\ast$-algebra generated by elements $a$ and $c$, with relations
\begin{eqnarray}\label{SUq(2)}
ac\ =\ qca,\quad ac^*\ =\ qc^*a,\quad cc^*\ =\ c^*c,\nonumber\\
a^*a+c^*c\ =\ 1,\quad aa^*+q^2cc^*\ =\ 1.
\end{eqnarray}
These relations are equivalent to requiring that the ``defining'' matrix
\[
U\ =\ \begin{pmatrix}
a & -qc^*\\
c & a^*
\end{pmatrix}
\]
is unitary. The Hopf $\ast$-algebra structure on $\mathcal{A}(SU_q(2))$ is given by the following:
\[
\Delta(U)\,=\,U\otimes U,\quad\Delta(U^*)\,=\,U^*\otimes U^*,\quad S(U)\,=\,U^*,\quad \varepsilon(U)\,=\,I_2.
\]


\subsection{The dual pairing}
There is a non-degenerate bilinear pairing between $\mathcal{U}_q(\mathfrak{su}(2))$ and $\mathcal{A}(SU_q(2))$, given
on the generators by
\[
\langle K^\pm,a\rangle=q^{\mp\frac{1}{2}},\quad\langle K^\pm,a^*\rangle=q^{\pm\frac{1}{2}},\quad\langle E,c\rangle=1,\quad\langle F,c^*\rangle=-q^{-1},
\]
and all other pairings of generators are zero. One regards $\mathcal{U}_q(\mathfrak{su}(2))$ as a subspace of
the linear dual of $\mathcal{A}(SU_q(2))$ via this pairing. Then, there are canonical left and right
$\mathcal{U}_q(\mathfrak{su}(2))$-module algebra structures on $\mathcal{A}(SU_q(2))$ such that
\[
\langle g,\, h\triangleright x\rangle\ :=\ \langle gh,\, x\rangle,\quad \langle g,\,x\triangleleft h\rangle
\ :=\ \langle hg,x\rangle
\]
for all $g,\, h\,\in\, \mathcal{U}_q(\mathfrak{su}(2))$ and $x\,\in\,\mathcal{A}(SU_q(2))$.
They are given by $h\triangleright x\, =\, \langle\mathrm{id}\otimes h,\,\Delta x\rangle$ and
$x\triangleleft h\,=\,\langle h\otimes\mathrm{id},\,\Delta x\rangle$, or equivalently,
\[
h\triangleright x\ =\ x_{(1)}\langle h,\,x_{(2)}\rangle,\quad x\triangleleft h\ =\
\langle h,\,x_{(1)}\rangle x_{(2)},
\]
in the Sweedler notation $\Delta(x)\,=\,x_{(1)}\otimes x_{(2)}$ for the coproduct. These left and right actions are related by the antipodes\,: $S(Sh\triangleright x)\,=\,Sx\triangleleft h$.

On the generators, these canonical
left and right actions are given by the following:
\[
K^\pm\triangleright a^s=q^{\mp\frac{s}{2}}a^s,\quad K^\pm\triangleright (a^*)^s=q^{\pm\frac{s}{2}}(a^*)^s,\quad K^\pm\triangleright c^s=q^{\mp\frac{s}{2}}c^s,\quad K^\pm\triangleright (c^*)^s=q^{\pm\frac{s}{2}}(c^*)^s
\]
\[
E\triangleright a^s=-q^{\frac{3-s}{2}}[s]a^{s-1}c^*\quad,\quad E\triangleright c^s=q^{\frac{1-s}{2}}[s]c^{s-1}a^*\quad,\quad E\triangleright (a^*)^s=E\triangleright(c^*)^s=0
\]
\[
F\triangleright (a^*)^s=q^{\frac{1-s}{2}}[s]c(a^*)^{s-1}\quad,\quad F\triangleright (c^*)^s=-q^{-\frac{1+s}{2}}[s]a(c^*)^{s-1}\quad,\quad F\triangleright a^s=F\triangleright c^s=0;
\]
and
\[
a^s\triangleleft K^\pm=q^{\mp\frac{s}{2}}a^s,\quad (a^*)^s\triangleleft K^\pm=q^{\pm\frac{s}{2}}(a^*)^s,\quad c^s\triangleleft K^\pm=q^{\pm\frac{s}{2}}c^s,\quad (c^*)^s\triangleleft K^\pm=q^{\mp\frac{s}{2}}(c^*)^s
\]
\[
(a^*)^s\triangleleft E=-q^{\frac{3-s}{2}}[s]a^{s-1}c^*\quad,\quad c^s\triangleleft E=q^{\frac{1-s}{2}}[s]c^{s-1}a^*\quad,\quad a^s\triangleleft E=(c^*)^s\triangleleft E=0
\]
\[
a^s\triangleleft F=q^{\frac{s-1}{2}}[s]ca^{s-1}\quad,\quad (c^*)^s\triangleleft F=-q^{\frac{s-3}{2}}[s]a^*(c^*)^{s-1}\quad,\quad (a^*)^s\triangleleft F=c^s\triangleleft F=0
\]
for any $s\in\mathbb{N}$, where $[s]$ denotes the $q$-number. These left and right actions are mutually commuting\,:
\[
(h\triangleright a)\triangleleft g\ =\ h\triangleright (a\triangleleft g),
\]
for all $h,g\in\mathcal{U}_q(\mathfrak{su}(2))$ and $a\in\mathcal{A}(SU_q(2))$, and since the pairing satisfies $\langle (Sh)^*,x\rangle\,=\,\overline{\langle h,x^*\rangle}$, the
$\ast$-structure is compatible with both the actions\,:
\[
h\triangleright x^*\ =\ \left((Sh)^*\triangleright x)\right)^*,\quad x^*\triangleleft h
\ =\ \left(x\triangleleft (Sh)^*)\right)^*.
\]
There is another left action, denoted by $\partial$, of $\mathcal{U}_q(\mathfrak{su}(2))$ on
$\mathcal{A}(SU_q(2))$, given by the formula:
\[
\partial_{g}(x)\ :=\ x\triangleleft S^{-1}(g)
\]
for $g\,\in\,\mathcal{U}_q(\mathfrak{su}(2))$ and $x\,\in\,\mathcal{A}(SU_q(2))$. Explicitly on the
generators, we have
\[
\partial_K(a)=q^{\frac{1}{2}}a,\quad\partial_K(a^*)=q^{-\frac{1}{2}}a^*,\quad\partial_K(c)=q^{-\frac{1}{2}}c,\quad\partial_K(c^*)=q^{\frac{1}{2}}c^*
\]
\[
\partial_E(a^*)=c^*\quad,\quad\partial_E(c)=-q^{-1}a\quad,\quad\partial_E(a)=\partial_E(c^*)=0
\]
\begin{equation}\label{reqd actions}
\partial_F(a)=-qc\quad,\quad\partial_F(c^*)=a^*\quad,\quad\partial_F(c)=\partial_F(a^*)=0.
\end{equation}
This action is used in \cite{Schmudgen-Wagner}.


\subsection{The homogeneous space $\mathcal{A}(S_q^2)$}
The principal bundle structure on the compact quantum group $SU_q(2)$ is
constructed using the right $U(1)$-action $\alpha\,:\,U(1)\, \longrightarrow\,
\mathrm{Aut}(\mathcal{A}(SU_q(2)))$, which on the generators is given by $a\,\mapsto\, za$ and $c\,\mapsto\, zc$ for $z\,\in\, U(1)$. We have a vector space decomposition $\mathcal{A}(SU_q(2))
\,=\,\oplus_{n\in\mathbb{Z}}\,\mathcal{L}_n$, where
\[
\mathcal{L}_n\ :=\ \{x\,\in\,\mathcal{A}(SU_q(2))\,\,\big\vert\,\, \alpha_z(x)
\,=\,z^{-n}x\ \,\,\forall\,z\,\in\,  U(1)\}.
\]
Alternatively, we have
\begin{eqnarray}\label{bimod Ln}
\mathcal{L}_n\ =\ \{x\,\in\,\mathcal{A}(SU_q(2))\,\,\big\vert\,\, K\triangleright x
\, =\, q^{\frac{n}{2}}x\}.
\end{eqnarray}
Observe that $\mathcal{L}_n\mathcal{L}_m\,\subseteq\, \mathcal{L}_{n+m}$ and $\mathcal{L}_n^*
\,\subseteq\, \mathcal{L}_{-n}$.

We may also consider the left $U(1)$-action $\beta\,:\,U(1)\,\longrightarrow\, \mathrm{Aut}(\mathcal{A}
(SU_q(2)))$, given on the generators by $a\,\longmapsto\, za$ and $c\,\longmapsto\, \overline{z}c$
for $z\,\in\, U(1)$. We obtain a vector space decomposition $\mathcal{A}(SU_q(2))
\,=\,\oplus_{n\in\mathbb{Z}}\,M_n$, where
\[
M_n\ :=\ \{x\,\in\,\mathcal{A}(SU_q(2))\,\,\big\vert\,\, \alpha_z(x)\,=\,z^nx\ \,\,\forall\,
\, z\,\in\, U(1)\}.
\]
Alternatively, we have
\begin{eqnarray}\label{bimod Mn}
M_n &=&\{x\,\in\,\mathcal{A}(SU_q(2))\,\,\big\vert\,\, \partial_K(x)\,=\, q^{-\frac{n}{2}}x\}\nonumber\\
&=& \{y\,\in\,\mathcal{A}(SU_q(2))\,\,\big\vert\,\, y\triangleleft K\,=\,q^{\frac{n}{2}}y\}\,,
\end{eqnarray}
(recall the $\partial_K$-action from \eqref{reqd actions}). This formulation has been used in \cite{Schmudgen-Wagner}. Precise relationship between these two decompositions of $\mathcal{A}(SU_q(2))$ is given by the following.

\begin{lemma}\label{essentially same}
The following holds: $$S(\mathcal{L}_n)\ =\ M_{-n}$$ for each $n\,\in\,\mathbb{Z}$, where $S$ is the antipode
of $\,\mathcal{A}(SU_q(2))$.
\end{lemma}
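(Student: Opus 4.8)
The plan is to deduce the inclusion $S(\mathcal{L}_n)\subseteq M_{-n}$ directly from the antipodal compatibility $S(Sh\triangleright x)=Sx\triangleleft h$ recorded above, and then to upgrade it to an equality using that the antipode $S$ is a linear automorphism of $\mathcal{O}(\mathrm{SU}_q(2))$ together with the two weight-space decompositions $\mathcal{O}(\mathrm{SU}_q(2))=\bigoplus_k\mathcal{L}_k=\bigoplus_k M_k$. Throughout I will use the ``alternative'' descriptions $\mathcal{L}_n=\{x:K\triangleright x=q^{n/2}x\}$ and $M_n=\{x:x\triangleleft K=q^{n/2}x\}$, which are the most convenient for this purpose.

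First I would specialize $S(Sh\triangleright x)=Sx\triangleleft h$ to $h=K$. Since $S(K)=K^{-1}$, this reads $S(K^{-1}\triangleright x)=Sx\triangleleft K$ for every $x\in\mathcal{O}(\mathrm{SU}_q(2))$. Now take $x\in\mathcal{L}_n$. Because $K$ and $K^{-1}$ act by mutually inverse operators and $K\triangleright x=q^{n/2}x$, we get $K^{-1}\triangleright x=q^{-n/2}x$, hence $S(K^{-1}\triangleright x)=q^{-n/2}Sx$. Comparing the two sides gives $Sx\triangleleft K=q^{-n/2}Sx$, which is exactly the condition $Sx\in M_{-n}$. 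Thus $S(\mathcal{L}_n)\subseteq M_{-n}$ for every $n\in\mathbb{Z}$. (The same inclusion can also be checked by a direct computation: $S$ sends the standard monomial $a^ic^j(c^*)^k$ to a nonzero scalar multiple of $(a^*)^ic^j(c^*)^k$, and one simply reads off the $K$-weights of the two sides; the argument via the antipodal identity is, however, shorter and coordinate-free.)

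For the reverse inclusion, recall that the antipode of $\mathcal{O}(\mathrm{SU}_q(2))$ is invertible (for instance $S^2$ is an algebra automorphism, so $S$ is in particular a bijective linear map). Fix $n$ and let $y\in M_{-n}$; write $y=S(x)$ and decompose $x=\sum_k x_k$ with $x_k\in\mathcal{L}_k$. By the inclusion just proved, $S(x_k)\in M_{-k}$, so $y=\sum_k S(x_k)$ is the expansion of $y$ relative to $\mathcal{O}(\mathrm{SU}_q(2))=\bigoplus_k M_{-k}$. Since $y\in M_{-n}$ and this sum is direct, $S(x_k)=0$ for all $k\neq n$, whence $x_k=0$ for $k\neq n$ by injectivity of $S$. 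Therefore $x=x_n\in\mathcal{L}_n$ and $y=S(x_n)\in S(\mathcal{L}_n)$, which yields $M_{-n}\subseteq S(\mathcal{L}_n)$ and completes the proof.

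I do not expect a real obstacle here: the statement is essentially bookkeeping. The two points that require a little care are (i) tracking the exponent sign correctly --- inserting $h=K$ in the antipodal identity produces $K^{-1}\triangleright x$, hence the eigenvalue $q^{-n/2}$ rather than $q^{n/2}$ --- and (ii) the passage from the pointwise inclusions $S(\mathcal{L}_n)\subseteq M_{-n}$ to equalities, which genuinely uses both the bijectivity of $S$ and the fact that the $M_k$ form an internal direct sum.
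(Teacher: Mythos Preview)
Your proof is correct and rests on exactly the same two ingredients as the paper's --- the antipodal compatibility $S(Sh\triangleright x)=Sx\triangleleft h$ specialized to $h=K$, and the invertibility of $S$. The only difference is packaging: the paper runs the computation as a single chain of ``if and only if'' statements (using injectivity of $S$ to cancel $S$ from both sides of $S(K^{-1}\triangleright x)=q^{n/2}Sx$), thereby obtaining both inclusions at once, whereas you prove $S(\mathcal{L}_n)\subseteq M_{-n}$ first and then recover the reverse inclusion via the direct-sum decompositions. Your route is slightly longer but perfectly sound.
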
\label{lem:antipode-Ln-Mn}

\begin{proof}
Recall that the antipode $S$ in a Hopf $\ast$-algebra is invertible with inverse $S^{-1}=\ast\circ S\circ\ast$ \cite[Section 1.2.7]{Klimyk}. We have the following if and only if statements\,:
\begin{eqnarray*}
Sx\in M_n &\iff& Sx\triangleleft K=q^{\frac{n}{2}}Sx\\
&\iff& S(S(K)\triangleright x)=q^{\frac{n}{2}}Sx\\
&\iff& S(K^{-1}\triangleright x)=q^{\frac{n}{2}}Sx\\
&\iff& K^{-1}\triangleright x=q^{\frac{n}{2}}x\qquad[\mbox{since }S\mbox{ is invertible}]\\
&\iff& K\triangleright x=q^{-\frac{n}{2}}x\\
&\iff& x\in L_{-n}
\end{eqnarray*}
for all $x\in\mathcal{A}(SU_q(2))$.
\end{proof}
Note that for each $n\in\mathbb{Z}$,
\[
F\triangleright \mathcal{L}_n\subseteq \mathcal{L}_{n-2},\quad E\triangleright \mathcal{L}_n\subseteq \mathcal{L}_{n+2},\quad K\triangleright \mathcal{L}_n\subseteq \mathcal{L}_n
\]
whereas
\[
\partial_F(M_n)\subseteq M_{n+2},\quad \partial_E(M_n)\subseteq M_{n-2},\quad \partial_K(M_n)\subseteq M_n.
\]
We also have $\mathcal{L}_n\triangleleft g\subseteq \mathcal{L}_n$ and $g\triangleright M_n\subseteq M_n$, for all $g\in\mathcal{U}_q(\mathfrak{su}(2))$.

The fixed point subalgebra under the right $U(1)$-action (left $K\triangleright\,\cdot\,$-action) is the coordinate algebra $\mathcal{A}(S_q^2)$ of the standard Podle\'s sphere \cite{Podles1987QuantumSpheres} generated by
\begin{eqnarray}\label{Sq^2}
B_0:=cc^*\,,\quad B_-:=ac^*\,,\quad B_+:=ca^*
\end{eqnarray}
subject to the following relations:
\begin{eqnarray}\label{Sq^21}
B_-B_0=q^2B_0B_-,\quad B_+B_-=B_0(1-B_0),\quad B_-B_+=q^2B_0(1-q^2B_0).
\end{eqnarray}
All the $\mathcal{L}_n$'s in \eqref{bimod Ln} are bimodules over $\mathcal{A}(S_q^2)$, and $\mathcal{L}_0=\mathcal{A}(S_q^2)$. It is known that $\mathcal{L}_n$'s are finitely generated projective $\mathcal{A}(S_q^2)$-modules,
analogous to the canonical line bundles $\mathcal{O}(-n)$ on $\mathbb{C}\mathrm{P}^1$ of degree
$-n\, \in\, {\mathbb Z}$.

Alternatively, we can also use the left $U(1)$-action (the $\partial_K$-action), and the fixed point subalgebra now has the following alternative description: unital $\ast$-subalgebra of $\mathcal{A}(SU_q(2))$ generated by
\begin{eqnarray}\label{alternative Sq^2}
A:=c^*c\,,\quad B:=ac\,,\quad B^*:=c^*a^*
\end{eqnarray}
subject to the following relations:
\begin{eqnarray}\label{alternative Sq^21}
BA\ =\ q^2AB,\quad B^*B\ =\ A-A^2,\quad BB^*\ =\ q^2A(1-q^2A).
\end{eqnarray}
All the $M_n$'s in \eqref{bimod Mn} are bimodules over $\mathcal{A}(S_q^2)$, and $M_0\,=\, \mathcal{A}(S_q^2)$.


\subsection{The $C^*$-algebras $C(SU_q(2))\mbox{ and }C(S_q^2)$}

The algebra $\mathcal{A}(SU_q(2))$ has a vector space basis consisting of matrix elements of its
irreducible corepresentations $$\{t^\ell_{m,n}\,\, \big\vert\,\, \ell\,\in\,\frac{1}{2}\mathbb{N},
\ \,m,\,n\,=\,-\ell,\,-\ell+1,\,\cdots,\, \ell\}.$$ In particular, we have
\[
t^0_{00}\ =\ 1,\quad t^{\frac{1}{2}}_{-\frac{1}{2},-\frac{1}{2}}\ =\ 
a,\quad t^{\frac{1}{2}}_{\frac{1}{2},-\frac{1}{2}}\ =\ c.
\]
The coproduct satisfies the formula $\Delta(t^\ell_{mn})\ =\ \sum_k\,t^\ell_{mk}\otimes t^\ell_{kn}$, and the
product satisfies the formula
\[
t^j_{rs}\otimes t^\ell_{mn}\ =\ \sum_{}^{}\,C_q\begin{pmatrix}
j & \ell & k\\
r & m & r+m
\end{pmatrix}\,C_q\begin{pmatrix}
j & \ell & k\\
s & n & s+n
\end{pmatrix}\,t^k_{r+m,s+n},
\]
where the $C_q$'s are the $q$-Clebsch-Gordan coefficients.

The universal $C^*$-algebra $C(SU_q(2))$, which is generated by $a,\,c$ subject to the relations same as 
in $\mathcal{A}(SU_q(2))$, is the first example of a compact quantum group \cite{Woronowicz}. One of 
the main features of this compact quantum group is the existence of a unique left-invariant Haar state that we 
shall denote by $h$. This state is faithful, and $h(t^\ell_{mn})\,=\,0$ for $\ell\,>\,0$. Note that the $C^*$-algebras 
$C(SU_q(2))$ for different $q\,\in\,(0,\,1)$ are all isomorphic, although as compact quantum groups this is 
not the case (recall that we have $0\,<\,q\,<\,1$).

Let $L^2(SU_q(2))$ denote the GNS-Hilbert space with respect to the Haar state with an orthonormal 
basis (the Peter-Weyl basis) 
$$\left\{|\ell,\,m,\,n\rangle\,:=\,q^m[2\ell+1]^{\frac{1}{2}}\eta(t^\ell_{m,n})\,\,\big\vert\,\,
\ell\,\in\,\frac{1}{2}\mathbb{N},\,\, m,\,n\,=\,-\ell,\,-\ell+1,\, \cdots,\,\ell\right\},$$ 
where $\eta\,:\,C(SU_q(2))\,\longrightarrow\, L^2(SU_q(2))$ is the GNS map which is injective.

We have the following irreducible representation of $\mathcal{U}_q(\mathfrak{su}(2))$ on the $(2\ell+1)$-dimensional
vector space $V_\ell\,:=\,\mathrm{span}\{|\ell,\,m,\,n\rangle\,\,\big\vert\,\,
m\,=\, -\ell,\,-\ell+1,\,\cdots,\,\ell\}$ given by
\begin{align*}
\sigma(K)|\ell,m,n\rangle &= q^n|\ell,m,n\rangle\\
\sigma(E)|\ell,m,n\rangle &= ([\ell-n][\ell+n+1])^{\frac{1}{2}}|\ell,m,n+1\rangle\\
\sigma(F)|\ell,m,n\rangle &= ([\ell+n][\ell-n+1])^{\frac{1}{2}}|\ell,m,n-1\rangle.
\end{align*}
This representation is such that the left regular representation $\pi$ of $C(SU_q(2))$ on the GNS-space $L^2(SU_q(2))$ is equivariant with respect to the left $\mathcal{U}_q(\mathfrak{su}(2))$-action:
\[
\sigma(g)\pi(f)\ =\ \pi(g\triangleright f),\quad g\,\in\, \mathcal{U}_q(\mathfrak{su}(2)),\,\, \,f
\,\in \,C(SU_q(2)).
\]
The right $U(1)$-action $\alpha$ on $\mathcal{A}(SU_q(2))$ defined in Section $2.3$ extends to $C(SU_q(2))$ by
automorphisms, and we have a $C^*$-dynamical system $\left(C(SU_q(2)),\,L^2(SU_q(2)),\,
U(1),\,\alpha\right)$. The invariant $\ast$-subalgebra in $C(SU_q(2))$ under this action is by definition the $C^*$-algebra of the standard Podle\'s sphere, denoted by $C(S_q^2)\,
:=\,C(SU_q(2))^{U(1)}$. It is well-known that $C(S_q^2)\,\cong\,
\mathcal{K}(\ell^2(\mathbb{N}))\oplus\mathbb{C}$ (the minimal unitization of compact operators), where
$\mathcal{K}(\ell^2(\mathbb{N}))$ denotes the space of all compact operators on $\ell^2(\mathbb{N})$.

It turns out that the restriction of the GNS-map $\eta\,:\,C(S_q^2)\,\longrightarrow\,
L^2(SU_q(2))^{U(1)}$ is still injective, and we have
\[
L^2(S_q^2)\,:=\,L^2(C(S_q^2))\,=\,\overline{\mathrm{span}}\left\{|\ell,\,m,\,0\rangle\,\,\big\vert\,
\, \ell\,\in\,\frac{1}{2}\mathbb{N},\,\,m\,=\,-\ell,\,-\ell+1,\,\cdots,\,\ell\right\},
\]
which is a closed subspace of $L^2(SU_q(2))$.


\subsection{$\mathbb{C}\mathrm{P}_q^1$ and holomorphic line bundles}\label{subsec:holomorphic-line-bundles}

The quantum principal $U(1)$-bundle $\mathcal{A}(S_q^2)\, \xhookrightarrow{}\,\mathcal{A}(SU_q(2))$ is endowed
(\cite{BM1}, \cite{BM2}) with compatible nonuniversal calculi obtained from Woronowicz's left-covariant $3D$-calculus on $\mathcal{A}(SU_q(2))$ \cite{Woronowicz}. Its restriction on $\mathcal{A}(S_q^2)$
gives the unique left-covariant $2D$-calculus on $\mathcal{A}(S_q^2)$ (\cite{Podles2}, \cite{Majid}). This unique
calculus was also realized via a Dirac operator on $\mathcal{A}(S_q^2)$ \cite{Schmudgen-Wagner}. We describe it briefly here and refer to Section $3.4$ in \cite{KhalkhaliLandiVanSuijlekom2011} for details. We set
\[
\omega_-\ =\ c^*da^*-qa^*dc^*\,,\quad\omega_+\ =\ adc-qcda,
\]
\[
X_-\ =\ q^{-\frac{1}{2}}FK\,,\quad X_+\ =\ q^{\frac{1}{2}}EK,
\]
where $E,F,K\in\mathcal{U}_q(\mathfrak{su}(2))$. Then, for $f\,\in\,\mathcal{A}(S_q^2)$ we have the differential
\[
df\ =\ (X_-\triangleright f)\omega_-+(X_+\triangleright f)\omega_+,
\]
where the holomorphic and the anti-holomorphic differentials are respectively
\begin{eqnarray}\label{the differentials}
\partial f\,=\,(X_+\triangleright f)\,\omega_+\,,\quad\overline\partial f\,=\,(X_-\triangleright f)\,\omega_-.
\end{eqnarray}
They satisfy the condition $(\partial f)^*\,=\,\overline\partial f^*$. This decomposes $\Omega^1(S_q^2)$ into $\Omega^{1,0}(S_q^2)\oplus\Omega^{0,1}(S_q^2)$, and we also have $\Omega^2(S_q^2)=\Omega^{1,1}(S_q^2)$ with $\Omega^{2,0}(S_q^2)=\Omega^{0,2}(S_q^2)=\{0\}$. All the higher forms vanish. Thus,
\[
\Omega^\bullet(S_q^2)=\mathcal{A}(S_q^2)\oplus\big(\Omega^{1,0}(S_q^2)\oplus\Omega^{0,1}(S_q^2)\big)\oplus\Omega^{1,1}(S_q^2)\,.
\]
The splitting $\Omega^1(S_q^2)=\Omega^{1,0}(S_q^2)\oplus\Omega^{0,1}(S_q^2)$ together with the differentials $\partial\mbox{ and }\bar\partial$ given in \eqref{the differentials} constitute a complex structure for the differential calculus on the standard Podle\'s sphere \cite[Proposition 3.4]{KhalkhaliLandiVanSuijlekom2011}. We remark that $\Omega^{1,0}(S_q^2)\mbox{ and }\Omega^{0,1}(S_q^2)$ are rank one finitely generated projective bimodules and $\Omega^{1,1}(S_q^2)$ is a free rank one bimodule over $\mathcal{A}(S_q^2)$.
\smallskip

\noindent\textbf{Notation:} Following \cite{KhalkhaliLandiVanSuijlekom2011}, henceforth we shall write $\mathcal{A}(\mathbb{C}\mathrm{P}_q^1)$ in place of $\mathcal{A}(S_q^2)$ to denote the coordinate algebra of the quantum projective line $\mathbb{C}\mathrm{P}_q^1\,$. This is to stress on the associated complex structure. Similarly, we shall write $C(\mathbb{C}\mathrm{P}_q^1)$ in place of $C(S_q^2)$ for the $C^*$-algebra.

\begin{definition}\label{holomorphic vb}
A finitely generated projective left module $E$ over $\mathcal{A}(\mathbb{C}\mathrm{P}_q^1)$ is called holomorphic
if there exists a flat $\overline\partial$-connection $\overline{\nabla}_E\,:\,E\,\longrightarrow\,
\Omega^{0,1}(\mathbb{C}\mathrm{P}_q^1)\otimes_{\mathcal{A}(\mathbb{C}\mathrm{P}_q^1)}E$. The pair $(E,\,\overline{\nabla}_E)$ is called a holomorphic vector bundle over the quantum projective line $\mathbb{C}\mathrm{P}_q^1$.
\end{definition}

Each bimodule $\mathcal{L}_n$ in \eqref{bimod Ln} is a rank-one finitely generated projective left $\mathcal{A}(\mathbb{C}\mathrm{P}_q^1)$-module, and enjoy the following properties
\begin{eqnarray}\label{properties}
\mathcal{L}_n^*=\mathcal{L}_{-n}\,,\qquad\mathcal{L}_n\otimes_{\mathcal{A}(\mathbb{C}\mathrm{P}_q^1)}\mathcal{L}_m\cong \mathcal{L}_{n+m}
\end{eqnarray}
for any $n,m\in\mathbb{Z}$, where the isomorphism is the multiplication map \cite[Proposition 3.1]{KhalkhaliLandiVanSuijlekom2011}. We have $\Omega^{0,1}(\mathbb{C}\mathrm{P}_q^1)=\mathcal{L}_{-2}\,\omega_-$ and $\Omega^{1,0}(\mathbb{C}\mathrm{P}_q^1)=\mathcal{L}_2\,\omega_+$. Each $\mathcal{L}_n$ carries a flat $\overline\partial$-connection $\overline{\nabla}^{(n)}_0\,:\,
\mathcal{L}_n\,\rightarrow\,\Omega^{0,1}(\mathbb{C}\mathrm{P}_q^1)\otimes_{\mathcal{A}(\mathbb{C}\mathrm{P}_q^1)}\mathcal{L}_n$ that induces a holomorphic structure on it, and it is defined by
\begin{eqnarray}\label{standard hol}
\overline{\nabla}^{(n)}_0\phi\ :=\ q^{-n+2}\omega_-(X_-\triangleright\phi).
\end{eqnarray}
In particular, $\overline\partial\,:\,\mathcal{L}_0\,=\,\mathcal{A}(\mathbb{C}\mathrm{P}_q^1)\,\longrightarrow\,
\Omega^{0,1}(\mathbb{C}\mathrm{P}_q^1)\cong\mathcal{L}_{-2}$ is the holomorphic structure $\overline{\nabla}^{(0)}_0$ on the trivial line bundle $\mathcal{L}_0$. It is known that $\mathrm{ker}(\bar\partial)\,=\,\mathbb{C}$ \cite[Section 4.1]{KhalkhaliLandiVanSuijlekom2011}.

The connection $\overline{\nabla}^{(n)}_0$ will be referred to as the standard holomorphic structure on $\mathcal{L}_n$ throughout the article.

At the $C^*$-algebraic level, we can use the $U(1)$-action to decompose $C(SU_q(2))$ into
$C(\mathbb{C}\mathrm{P}_q^1)$-modules
\[
\Gamma(\mathcal{L}_n)\ =\ \{f\,\in\, C(SU_q(2))\,\, \big\vert\,\, \alpha_z(f)
\,=\,z^{-n}f \, \,\, \forall\,\, z\,\in\, U(1)\},
\]
which are spaces of continuous sections on the line bundles $\mathcal{L}_n$. The space of $L^2$-sections are defined as
\[
L^2(\mathcal{L}_n)\ :=\ \{\psi\,\in\, L^2(SU_q(2))\,\,\big\vert\,\,
\rho(z)\psi\,=\,z^{-n}\psi\,\, \,\forall\,\,z\,\in\, U(1)\}.
\]
The operator $\sigma(F)$ is unbounded acting on $L^2(SU_q(2))$ with a dense domain that contains the
image of $\mathcal{A}(\mathbb{C}\mathrm{P}_q^1)$ inside $L^2(\mathbb{C}\mathrm{P}_q^1)\,\subseteq\, L^2(SU_q(2))$ under the
GNS-map $\eta\,:\,C(\mathbb{C}\mathrm{P}_q^1)\,\longrightarrow\, L^2(\mathbb{C}\mathrm{P}_q^1)$. It turns out that the kernel of $\sigma(F)$
restricted to $L^2(\mathbb{C}\mathrm{P}_q^1)$ is $\mathbb{C}$, and as a consequence, there are no nontrivial holomorphic
polynomial functions on $\mathbb{C}\mathrm{P}_q^1$. Furthermore, there are no nontrivial holomorphic functions
in $\mathrm{Dom}(\overline\partial)\cap C(\mathbb{C}\mathrm{P}_q^1)$, where
\[
\mathrm{Dom}(\overline\partial)\ :=\ \{f\,\in\, C(SU_q(2))\,\,\big\vert\,\,
\norm{F\triangleright f}\,<\,\infty\}.
\]
See \cite[Section 4]{KhalkhaliLandiVanSuijlekom2011} for details.
\medskip


\section{The Dirac operator for $\mathbb{C}\mathrm{P}_q^1$ and the Gauge equation}


\subsection{The gauge equation and the mixed gauge equation}

Let $\mathscr{A}$ be a noncommutative algebra equipped with a complex structure in the sense of \cite[Definition 2.1]{KhalkhaliLandiVanSuijlekom2011}, and $E$ be a holomorphic vector bundle over $\mathscr{A}$. That is, $E$ is a finitely generated projective (f.g.p in short) left module over $\mathscr{A}$ and there is a flat $\overline\partial$-connection $\overline\nabla\,:\,E\,\longrightarrow\,\Omega^{0,1}(\mathscr{A})\otimes_{\mathscr{A}} E$. Two flat $\overline\partial$-connections $\overline\nabla_j$, for $j\,=\,1,\,2$, are called gauge equivalent if there exists an invertible element $g\,\in\,\mathcal{E}nd_{\mathscr{A}}(E)$ such that $g\overline\nabla_2g^{-1}
\,=\,\overline\nabla_1$.

In this article, we are interested in $\mathscr{A}:=\mathcal{A}(\mathbb{C}\mathrm{P}_q^1)$ and $E\,:=\,\mathcal{L}_n$. In this case, we have $\mathcal{E}nd_{\mathcal{A}(\mathbb{C}\mathrm{P}_q^1)}(\mathcal{L}_n)\,=\,
\mathcal{A}(\mathbb{C}\mathrm{P}_q^1)$, realized through the right multiplication. Recall that for each $n\,\in\,\mathbb{Z}$, there is a twisted flip isomorphism
\begin{equation}\label{twisted}
\Phi_{(n)}\ :\ \mathcal{L}_n\otimes_{\mathcal{A}(\mathbb{C}\mathrm{P}_q^1)} \Omega^{0,1}(\mathbb{C}\mathrm{P}_q^1)\
\longrightarrow\ \Omega^{0,1}(\mathbb{C}\mathrm{P}_q^1)\otimes_{\mathcal{A}(\mathbb{C}\mathrm{P}_q^1)} \mathcal{L}_n
\end{equation}
of $\mathcal{A}(\mathbb{C}\mathrm{P}_q^1)$-bimodules \cite[Lemma 3.6]{KhalkhaliLandiVanSuijlekom2011}. Then, the difference between
two $\bar\partial$-connections on $\mathcal{L}_n$ is an element in
\begin{eqnarray}\label{230}
\mathrm{Hom}_{\mathcal{A}(\mathbb{C}\mathrm{P}_q^1)}(\mathcal{L}_n,\,\Omega^{0,1}(\mathbb{C}\mathrm{P}_q^1)\otimes_{\mathcal{A}(\mathbb{C}\mathrm{P}_q^1)} \mathcal{L}_n)\ \cong\ \Omega^{0,1}(\mathbb{C}\mathrm{P}_q^1)\,,
\end{eqnarray}
realized through the right multiplication. Since $\overline{\nabla}^{(n)}_0$, given in \eqref{standard hol}, is a flat connection on $\mathcal{L}_n$, we have the cochain complex
\begin{eqnarray}\label{231}
0\,\,\overset{0}{\longrightarrow}\,\,\mathcal{L}_n\,\,\overset{\overline{\nabla}^{(n)}_0}{\longrightarrow}\,\,\Omega^{0,1}(\mathbb{C}\mathrm{P}_q^1)\otimes_{\mathcal{A}(\mathbb{C}\mathrm{P}_q^1)}\mathcal{L}_n\,\,\overset{\overline{\nabla}^{(n)}_0}{\longrightarrow}\,\, 0
\end{eqnarray}
(because $\Omega^{0,2}(\mathbb{C}\mathrm{P}_q^1)=\{0\}$) in which
\[
H^{0,0}_{\overline{\nabla}^{(n)}_0}(\mathcal{L}_n)\ =\ \begin{cases}
0\,,\qquad\,n>0,\\
\mathbb{C}^{|n|+1},\,\,n\leq 0;
\end{cases}
\]
(see \cite[Theorem 4.4]{KhalkhaliLandiVanSuijlekom2011}). For $n=0$, we have in particular, $\overline{\nabla}^{(0)}_0=\bar\partial$ on $\mathcal{L}_0=\mathcal{A}(\mathbb{C}\mathrm{P}_q^1)$, and
\begin{eqnarray}\label{2321}
\Omega^{0,1}(\mathbb{C}\mathrm{P}_q^1)\otimes_{\mathcal{A}(\mathbb{C}\mathrm{P}_q^1)}\mathcal{L}_0\cong\Omega^{0,1}(\mathbb{C}\mathrm{P}_q^1)\,.
\end{eqnarray}
The cochain complex in \eqref{231} reduces to the Dolbeault complex\,:
\begin{eqnarray}\label{232}
0\,\,\overset{0}{\longrightarrow}\,\,\mathcal{L}_0\,\,\overset{\bar\partial}{\longrightarrow}\,\,\Omega^{0,1}(\mathbb{C}\mathrm{P}_q^1)\,\,\overset{\bar\partial}{\longrightarrow}\,\,\Omega^{0,2}(\mathbb{C}\mathrm{P}_q^1)=0
\end{eqnarray}
and we have $H^{0,1}_{\overline\partial}(\mathcal{L}_0)\,=\,0$
\cite[Proposition 7.2]{DAndreaLandi2013GeometryQuantumProjectiveSpaces}. Therefore, from \eqref{232} we conclude that the map
$\overline\partial\,:\,\mathcal{L}_0=\mathcal{A}(\mathbb{C}\mathrm{P}_q^1)\,\longrightarrow\, \Omega^{0,1}(\mathbb{C}\mathrm{P}_q^1)$ is surjective. This fact, together with \eqref{230}, says that any
element $\theta\,\in\, \mathrm{Hom}_{\mathcal{A}(\mathbb{C}\mathrm{P}_q^1)}(\mathcal{L}_n,\,\Omega^{0,1}(\mathbb{C}\mathrm{P}_q^1)
\otimes_{\mathcal{A}(\mathbb{C}\mathrm{P}_q^1)}\mathcal{L}_n)$ is of the form $\overline\partial f$ for
some $f\,\in\,\mathcal{A}(\mathbb{C}\mathrm{P}_q^1)$. Moreover, $f$ is unique up to scalars as $\mathrm{ker}(\overline\partial)
\,=\, \mathbb{C}$.  Thus, all holomorphic structures on $\mathcal{L}_n$ are given by the following:
\begin{equation}\label{all connection}
\overline{\nabla}^{(n)}_{\overline\partial f}(s)\ :=\ \overline{\nabla}^{(n)}_0(s)-\Phi_{(n)}(s\otimes\overline\partial f)
\end{equation}
for some $f\,\in\,\mathcal{A}(\mathbb{C}\mathrm{P}_q^1)$, where $\overline{\nabla}^{(n)}_0$ is the standard holomorphic structure on $\mathcal{L}_n$ given in \eqref{standard hol}, and $\Phi_{(n)}$ as in \eqref{twisted}.

We remark that the standard $\bar\partial$-connection $\overline{\nabla}^{(n)}_0$ on $\mathcal{L}_n$ is in fact a $\Phi_{(n)}$-bimodule connection, that is, it satisfies the twisted right Leibniz property:
\begin{equation}\label{bimod con}
\overline{\nabla}^{(n)}_0(sx)=\overline{\nabla}^{(n)}_0(s)x+\Phi_{(n)}(s\otimes\bar\partial x)
\end{equation}
for $s\in\mathcal{L}_n$ and $x\in\mathcal{A}(\mathbb{C}\mathrm{P}_q^1)$. Each $\overline{\nabla}^{(n)}_{\bar\partial f}$ in \eqref{all connection} is a left $\bar\partial$-connection on $\mathcal{L}_n$, but unlike $\overline{\nabla}^{(n)}_0$, it is not a $\Phi_{(n)}$-bimodule connection on $\mathcal{L}_n$ in general. Now, recall the following result from \cite{GravemanLaRueMacArthurPesinWei2025NonStandardHolomorphic}.

\begin{lemma}\cite[Lemma 3.1]{GravemanLaRueMacArthurPesinWei2025NonStandardHolomorphic}\label{their gauge}
Two $\overline\partial$-connections $\overline{\nabla}_1^n,\, \overline{\nabla}^{(n)}_2$ on the line bundle $\mathcal{L}_n$ are gauge equivalent, that is $g\overline{\nabla}_2^n g^{-1}\,=\, \overline{\nabla}_1^n$, if and only if $\overline\partial g\,=\,g\overline\partial f_1-\overline\partial f_2\,.\,g$ for $f_1,\,f_2\,\in\,\mathcal{A}(\mathbb{C}\mathrm{P}_q^1)$ associated, via \eqref{all connection}, to $\overline{\nabla}_1^n\mbox{ and }\overline{\nabla}_2^n$ respectively.
\end{lemma}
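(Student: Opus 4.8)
The plan is to unwind the definitions in \eqref{all connection} and the bimodule Leibniz rule \eqref{bimod con}, and to use the identification $\mathcal{E}nd_{\mathbb{C}\mathrm{P}_q^1}(\mathcal{L}_n)=\mathbb{C}\mathrm{P}_q^1$ via right multiplication. Write $\overline{\nabla}_j^n=\overline{\nabla}^n_{\overline\partial f_j}$, so that by \eqref{all connection}, $\overline{\nabla}_j^n(s)=\overline{\nabla}^n_0(s)-\Phi_{(n)}(s\otimes\overline\partial f_j)$. A gauge transformation is the invertible element $g\in\mathbb{C}\mathrm{P}_q^1$ acting on $\mathcal{L}_n$ by $s\mapsto sg$; thus the equation $g\overline{\nabla}_2^n g^{-1}=\overline{\nabla}_1^n$ should be read as $\overline{\nabla}_1^n(sg)=\overline{\nabla}_2^n(s)\,g$ for all $s\in\mathcal{L}_n$ (after transporting through the bimodule structure on $\Omega^{0,1}\otimes\mathcal{L}_n$). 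This is the reformulation I would adopt as the working identity.

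First I would expand the left-hand side $\overline{\nabla}_1^n(sg)$ using \eqref{all connection} and then the twisted right Leibniz rule \eqref{bimod con} for the standard connection $\overline{\nabla}^n_0$:
\[
\overline{\nabla}_1^n(sg)=\overline{\nabla}^n_0(sg)-\Phi_{(n)}(sg\otimes\overline\partial f_1)
=\overline{\nabla}^n_0(s)\,g+\Phi_{(n)}(s\otimes\overline\partial g)-\Phi_{(n)}(sg\otimes\overline\partial f_1).
\]
Next I would expand the right-hand side $\overline{\nabla}_2^n(s)\,g=\overline{\nabla}^n_0(s)\,g-\Phi_{(n)}(s\otimes\overline\partial f_2)\,g$. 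Cancelling the common term $\overline{\nabla}^n_0(s)\,g$, the gauge equivalence becomes equivalent to
\[
\Phi_{(n)}(s\otimes\overline\partial g)-\Phi_{(n)}(sg\otimes\overline\partial f_1)=-\Phi_{(n)}(s\otimes\overline\partial f_2)\,g
\]
for all $s\in\mathcal{L}_n$. The remaining work is to strip off $\Phi_{(n)}$ and $s$: since $\Phi_{(n)}$ is a bimodule isomorphism and the right $\mathbb{C}\mathrm{P}_q^1$-action on $\mathcal{L}_n\otimes\Omega^{0,1}$ is through the second factor while the isomorphism $\mathrm{Hom}_{\mathbb{C}\mathrm{P}_q^1}(\mathcal{L}_n,\Omega^{0,1}\otimes\mathcal{L}_n)\cong\Omega^{0,1}$ is realized by right multiplication, one reads off, using $\Phi_{(n)}(sg\otimes\overline\partial f_1)=\Phi_{(n)}(s\otimes g\,\overline\partial f_1)$ (left $\mathbb{C}\mathrm{P}_q^1$-linearity of tensoring over $\mathbb{C}\mathrm{P}_q^1$), that the identity collapses to
\[
\overline\partial g-g\,\overline\partial f_1=-\overline\partial f_2\,.\,g,
\]
i.e. $\overline\partial g=g\overline\partial f_1-\overline\partial f_2\,.\,g$, as claimed. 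Conversely, reversing each step shows that any invertible $g$ satisfying this identity furnishes a gauge equivalence.

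The main obstacle I anticipate is the careful bookkeeping of the bimodule structures: one must track on which side $\Phi_{(n)}$ moves the form, verify that $sg\otimes\overline\partial f_1=s\otimes g\overline\partial f_1$ genuinely holds in the balanced tensor product $\mathcal{L}_n\otimes_{\mathbb{C}\mathrm{P}_q^1}\Omega^{0,1}$, and confirm that after applying $\Phi_{(n)}$ the right $\mathbb{C}\mathrm{P}_q^1$-action lands correctly so that the element of $\Omega^{0,1}$ extracted is literally $g\overline\partial f_1$ and not some twisted version of it. Everything else is a routine substitution. This is essentially the content of \cite[Lemma 3.6]{KhalkhaliLandiVanSuijlekom2011} together with the surjectivity of $\overline\partial$ recorded above, so the proof is short once the identification $\mathcal{E}nd_{\mathbb{C}\mathrm{P}_q^1}(\mathcal{L}_n)=\mathbb{C}\mathrm{P}_q^1$ and the twisted Leibniz rule \eqref{bimod con} are in hand.
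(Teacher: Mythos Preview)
Your proposal is correct and follows essentially the same route as the paper: expand both sides via \eqref{all connection} and the twisted Leibniz rule \eqref{bimod con}, cancel $\overline{\nabla}^n_0(s)\,g$, and then strip off $\Phi_{(n)}$ and $s$. The only cosmetic difference is that the paper phrases the ``strip off $s$'' step as non-degeneracy of the evaluation pairing $\mathcal{L}_n^*\times\mathcal{L}_n\to\mathbb{C}\mathrm{P}_q^1$ (using $\mathcal{L}_n\cong\mathcal{L}_n^{**}$), which is precisely the content of your invoked isomorphism $\mathrm{Hom}_{\mathbb{C}\mathrm{P}_q^1}(\mathcal{L}_n,\Omega^{0,1}\otimes\mathcal{L}_n)\cong\Omega^{0,1}$.
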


\begin{definition}\label{mixed gauge equation}
The equation $\overline\partial g\,=\,g\overline\partial f$ is called the gauge equation/noncommutative
exponential equation, and the equation $\overline\partial g\,=\,g\overline\partial f-\overline\partial h\,.\,g$
is called the mixed gauge equation.
\end{definition}

Therefore, the problem of determining whether two $\bar\partial$-connections $\overline{\nabla}^{(n)}_{\overline\partial f}$ and $\overline{\nabla}^{(n)}_{\overline\partial h}$ as in \eqref{all connection} on line bundle $\mathcal{L}_n$ are gauge equivalent is to precisely solve the mixed gauge equation $\,\overline\partial g\,=\,g\overline\partial f-\overline\partial h\,.\,g\,$ for invertible $g\in\mathcal{E}nd_{\mathcal{A}(\mathbb{C}\mathrm{P}_q^1)}(\mathcal{L}_n)\cong\mathcal{A}(\mathbb{C}\mathrm{P}_q^1)$, with $f,\,h\in\mathcal{A}(\mathbb{C}\mathrm{P}_q^1)$. In the classical case, given any $f,\,h\,\in\, C^\infty(\mathbb{C}\mathrm{P}^1)$ we always have an invertible solution $g\,=\,\exp(f-h)$ of the gauge equation. However, lack of `many' invertible elements in $\mathcal{A}(\mathbb{C}\mathrm{P}_q^1)$, and considering the fact that even in the classical case solutions lie in the algebra of smooth functions rather than in the coordinate algebra, it is clear that we need to enlarge the domain of `$g$' and bring the $C^*$-algebra $C(\mathbb{C}\mathrm{P}_q^1)$ in the picture (also suggested in 
\cite[Section 3.1]{GravemanLaRueMacArthurPesinWei2025NonStandardHolomorphic}) to look for \emph{invertible} solutions of the mixed gauge equation. We shall formally formulate this in Section $3.3$ (see Defn. \ref{main definition}) as it requires some preparation.


\subsection{The Dirac operator and the anti-holomorphic differential}

Consider the $C^*$-completion $C(SU_q(2))$ of $\mathcal{A}(SU_q(2))$, the underlying $C^*$-algebra of the compact quantum group $SU_q(2)$. Let $h\,:\, C(SU_q(2))\,\rightarrow\, \mathbb{C}$ denote the Haar state, which is faithful in this case, and $L^2(SU_q(2))$ be the GNS Hilbert space with the inner-product given by
\[
  \langle x\,,\, y \rangle\ :=\ h(x^* y)\,.
\]
Recall from \eqref{bimod Mn} the decomposition
$\mathcal{A}(SU_q(2))\,=\,\bigoplus_{n\in\mathbb{Z}}\,M_n$ with $M_0\,=\,\mathcal{A}(\mathbb{C}\mathrm{P}_q^1)$. Set
\[
\mathcal{H}_+\ =\ \overline{M_1}^{\langle \cdot, \cdot \rangle},  \qquad 
  \mathcal{H}_-\ =\ \overline{M_{-1}}^{\langle \cdot, \cdot \rangle}.
\]
The restriction of the GNS-representation $\rho:C(SU_q(2))\to\mathcal{B}(L^2(SU_q(2)))$ to the standard Podle\'s sphere induces a faithful representation $\pi$ of the $C^*$-algebra $C(\mathbb{C}\mathrm{P}_q^1)$ on the Hilbert space $\mathcal{H}_+\oplus \mathcal{H}_-\,,$
\begin{eqnarray}\label{representation}
\pi(x)\ =\ \begin{pmatrix}
\pi_+(x) & 0\\
0 & \pi_-(x)
\end{pmatrix}\ :\ \mathcal{H}_+\oplus \mathcal{H}_-\ \longrightarrow\ \mathcal{H}_+\oplus \mathcal{H}_-\,.
\end{eqnarray}
where $\pi_\pm(x):=\rho(x)|_{\mathcal{H}_\pm}$. Note that $\pi_\pm$ are not faithful, but their direct sum $\pi$ is indeed so. We have $\norm{\pi(x)}_{\mathrm{op}}\,=\,\max_\pm\norm{\pi_\pm(x)}_{\mathrm{op}}$ for all $x\,\in\,C(\mathbb{C}\mathrm{P}_q^1)$. 

Now we recall the $0^+$-summable Dirac operator on the standard Podle\'s sphere introduced by D\polhk{a}browski and Sitarz \cite{Dabrowski-Sitarz}. We follow the formulation of Schm\"udgen and Wagner \cite{Schmudgen-Wagner}, where the Dirac operator is unitarily equivalent to that constructed by D\polhk{a}browski and Sitarz. We shall briefly recall it here, and refer to \cite[Section 3]{AguilarKaad2018PodlesSpectralMetric} or \cite[Section 2.2]{Kaad-Kyed} for details. Recall from \eqref{reqd actions} the linear maps
\[
\partial_E,\,\partial_F,\,\partial_K:\mathcal{A}(SU_q(2))\to\mathcal{A}(SU_q(2))
\]
where, $E,\,F,\,K$ are the generators of $\mathcal{U}_q(\mathfrak{su}(2))$. Using the dual pairing, it follows that $\partial_K$ is an algebra automorphism and that $\partial_E,\,\partial_F$ are twisted derivations, in the
sense that
\begin{align*}
\partial_E(xy) &= \partial_E(x)\partial_K(y)+\partial_K^{-1}(x)\partial_E(y)\\
\partial_F(xy) &= \partial_F(x)\partial_K(y)+\partial_K^{-1}(x)\partial_F(y)
\end{align*}
for all $x,y\in\mathcal{A}(SU_q(2))$. The $0^+$--summable spectral triple on the standard Podle\'s sphere is given by the following Dirac operator
\[
  \mathcal{D}\ =\ 
  \begin{pmatrix} 
    0 & \partial_F \\ 
    \partial_E & 0 
  \end{pmatrix}
\]
(see \cite[Section 2.2]{Kaad-Kyed} for details). This Dirac operator is not {\em isospectral} with the spin Dirac operator on the sphere $S^2$ equipped with the round metric, and it is a genuine quantum phenomenon. Note that for $x\,\in\,\mathcal{A}(\mathbb{C}\mathrm{P}_q^1)$, one has
\begin{eqnarray}\label{closable}
  [\mathcal{D}, x]\ =\ 
  \begin{pmatrix} 
    0 & \delta_2(x)\\ 
    \delta_1(x) & 0 
  \end{pmatrix}
\  \in\ \mathcal{B}(\mathcal{H}_+\oplus \mathcal{H}_-),
\end{eqnarray}
where $\delta_1\,=\,q^{\frac{1}{2}}\partial_E$ and $\delta_2\,=\,q^{-\frac{1}{2}}\partial_F$, and note that the derivation $d:\mathcal{A}(\mathbb{C}\mathrm{P}_q^1)\to\mathcal{B}(\mathcal{H}_+\oplus \mathcal{H}_-)$ induced by the commutator $[\mathcal{D},\cdot]$ is closable. The closable derivations $\delta_1$ and $\delta_2$ are related by the formula $\delta_1(x)^*\,=\,-\delta_2(x^*)$ for all $x
\,\in\,\mathcal{A}(\mathbb{C}\mathrm{P}_q^1)$. Connes' bimodule $\Omega^1_{\mathcal{D}}$ of one-forms is identified with the bimodule $M_{-2}\oplus M_2$ (see discussion after Proposition 3.2 in \cite{KhalkhaliLandiVanSuijlekom2011}, and also \cite[Remark 3.6]{Mesland-Rennie}). Thus, the calculus on $\mathcal{A}(\mathbb{C}\mathrm{P}_q^1)$ is also realized via this Dirac operator.

The same phenomenon persists in the setting of $\mathcal{L}_n$'s defined in \eqref{bimod Ln}, where $\mathcal{L}_0\,=\,\mathcal{A}(\mathbb{C}\mathrm{P}_q^1)$, and this is just a different way (left action vs. right action) of looking at the same things, as Lemma \ref{essentially same} shows. In fact, Neshveyev and Tuset \cite{Neshveyev} shows that starting with the differential calculus on $\mathcal{A}(\mathbb{C}\mathrm{P}_q^1)$, one can construct the Dirac operator, and by an important result of \cite{Dabrowski-Sitarz} we know that the Dirac operator on the standard Podle\'s sphere is essentially (up to non-zero scalar multiplication) uniquely determined by the requirements of $SU_q(2)$-invariance, boundedness of commutators, and the first-order condition. In this picture, the closable derivations $\delta_1^\prime(\cdot)\,=\,q^{-\frac{1}{2}}(F\triangleright\,\cdot\,)$ and $\delta_2^\prime(\cdot)\,=\,q^{\frac{1}{2}}(E\triangleright\,\cdot\,)$ are identified with the anti-holomorphic differential $\overline\partial$ and the holomorphic differential $\partial$ on $\mathcal{A}(\mathbb{C}\mathrm{P}_q^1)$ respectively up to the scalar factor $i$. Connes' bimodule $\Omega^1_{\mathcal{D}}$ of one-forms is similarly identified with the bimodule $\mathcal{L}_{-2}\oplus \mathcal{L}_2$. We write $H_\pm$ for the Hilbert space completion of $\mathcal{L}_{\pm 1}$, similar to $\mathcal{H}_\pm$ defined earlier.
\medskip

\noindent\textbf{Notation:} For elements in $\mathcal{A}(\mathbb{C}\mathrm{P}_q^1)$, or more generally in $C(\mathbb{C}\mathrm{P}_q^1)$, henceforth we shall write the $C^*$-norm as $\norm{\cdot}_{C(\mathbb{C}\mathrm{P}_q^1)}$. The norm of $\overline\partial f$ for any $f\in\mathcal{A}(\mathbb{C}\mathrm{P}_q^1)$ will be written as $\norm{\overline\partial f}_{\mathrm{op}}$ viewed as an element in $\mathcal{B}(H_+,\,H_-)$.
\medskip

In this article, we shall concentrate only on the anti-holomorphic differential $\overline\partial$. Denote
\[
\mathcal{B}(H_+,\,H_-):=\{T:H_+\to H_-\,:\,T\mbox{ is a bounded linear operator}\}.
\]
The closable derivation $\overline\partial:\mathcal{A}(\mathbb{C}\mathrm{P}_q^1)\subseteq C(\mathbb{C}\mathrm{P}_q^1)\to \mathcal{B}(H_+,\,H_-)$ extends to a closed derivation (also denoted by the same symbol) $\overline\partial\,:\,\mathscr{D}\, \longrightarrow\, \mathcal{B}(H_+,\,H_-)$ with domain $\mathscr{D}$. For any element $x\in\mathscr{D}$, there exists a sequence $\{x_n\}\subseteq\mathcal{A}(\mathbb{C}\mathrm{P}_q^1)$ such that $x_n\to x$ in $\norm{\cdot}_{C(\mathbb{C}\mathrm{P}_q^1)}$ and $\{\bar\partial(x_n)\}$ converges in the operator norm. Then, $\mathscr{D}$ is a Banach space with the graph norm $\norm{x}_{\mathrm{Gr}}\,:=\,\norm{x}_{C(\mathbb{C}\mathrm{P}_q^1)}+\norm{\overline\partial x}_{\mathrm{op}}$. Since the graph norm dominates the $C^*$-norm $\norm{\cdot}_{C(\mathbb{C}\mathrm{P}_q^1)}$ on $\mathcal{A}(\mathbb{C}\mathrm{P}_q^1)$, it is clear that $\mathscr{D}\,\subseteq\,C(\mathbb{C}\mathrm{P}_q^1)$ is a dense subspace with respect to the $C^*$-norm, as $\mathcal{A}(\mathbb{C}\mathrm{P}_q^1)\,\subseteq\,C(\mathbb{C}\mathrm{P}_q^1)$ is so. Although the following fact may be well-known to experts, we choose to provide a proof for the sake of completeness.

\begin{lemma}\label{Banach algebra}
$\mathscr{D}$ is a unital Banach algebra, and the closure of the closable derivation $\overline\partial:\mathcal{A}(\mathbb{C}\mathrm{P}_q^1)\to \mathcal{B}(H_+,\,H_-)$  is a derivation on $\mathscr{D}$.
\end{lemma}
\begin{proof}
For $a,\,b\,\in\,\mathscr{D}$, there exist sequences $\{a_n\},\,\{b_n\}\,\subseteq\,\mathcal{A}(\mathbb{C}\mathrm{P}_q^1)$ such
that $a_n\,\to\,a$ and $b_n\,\to\,b$ in the $\norm{\cdot}_{\mathrm{Gr}}$-norm. Then, $a_nb_n\,\to \,ab$ in the $C(\mathbb{C}\mathrm{P}_q^1)$-norm. Now, $\overline\partial(a_nb_n)\,=\,a_n\overline\partial(b_n)+\overline\partial(a_n)b_n$. Since convergent sequence of real numbers must be bounded, we have $\norm{\overline\partial b_n}_{\mathrm{op}}\,<\,
M$ for some $M\,>\,0$ and for all $n\,\ge\, 1$. Given $\epsilon\,>\,0$ there exists $N\,\in\,\mathbb{N}$ such
that we have
\[
\norm{a_n\overline\partial(b_n)-a\overline\partial b}_{\mathrm{op}}\,\le\,
\norm{a_n-a}_{C(\mathbb{C}\mathrm{P}_q^1)}\norm{\overline\partial b_n}_{\mathrm{op}}+
\norm{a}_{C(\mathbb{C}\mathrm{P}_q^1)}\norm{\overline\partial(b_n-b)}_{\mathrm{op}}\,<\,\epsilon
\]
for all $n\,\ge \,N$. This proves that $a_n\overline\partial(b_n)\,\to\, a\overline\partial b$, and similarly
$\overline\partial(a_n)b_n\,\to\, \overline\partial a\,.\,b$. So $\overline\partial(a_nb_n)\,\to\,
a\overline\partial b+\overline\partial a.b$. By the closedness of $\overline\partial$, we have $ab\,\in\,
\mathscr{D}$ and $\overline\partial(ab)
\,=\,a\overline\partial b+\overline\partial a\,.\,b$. Thus, $\mathscr{D}$ is an algebra and $\overline\partial$ is still a derivation on $\mathscr{D}$.

Finally, using the derivation property of $\overline\partial$, we obtain
\begin{eqnarray*}
\norm{ab}_{\mathrm{Gr}} &=& \norm{ab}_{C(\mathbb{C}\mathrm{P}_q^1)}+\norm{\overline\partial(ab)}_{\mathrm{op}}\\
&\le& \norm{a}_{C(\mathbb{C}\mathrm{P}_q^1)}\norm{b}_{C(\mathbb{C}\mathrm{P}_q^1)}+\norm{a}_{C(\mathbb{C}\mathrm{P}_q^1)}\norm{\overline\partial b}_{\mathrm{op}}+\norm{b}_{C(\mathbb{C}\mathrm{P}_q^1)}\norm{\overline\partial a}_{\mathrm{op}}\\
&\le& \norm{a}_{C(\mathbb{C}\mathrm{P}_q^1)}\norm{b}_{C(\mathbb{C}\mathrm{P}_q^1)}+\norm{a}_{C(\mathbb{C}\mathrm{P}_q^1)}\norm{\overline\partial b}_{\mathrm{op}}+\norm{b}_{C(\mathbb{C}\mathrm{P}_q^1)}\norm{\overline\partial a}_{\mathrm{op}}+\norm{\overline\partial a}_{\mathrm{op}}\norm{\overline\partial b}_{\mathrm{op}}\\
&=& \norm{a}_{\mathrm{Gr}}\norm{b}_{\mathrm{Gr}},
\end{eqnarray*}
which proves that $\mathscr{D}$ is a Banach algebra.
\end{proof}

It may be noted that $\mathscr{D}$ need not be $\ast$-closed. This is because we have no information on $\partial x$ for $x\,\in\,\mathscr{D}$. However, we do not need it in the context of holomorphic structures. In the setting of $C^*$-algebras and closed $\ast$-derivations, one usually works with an equivalent norm on the domain, see for instance \cite[Section 3.3]{Sakai}.


\subsection{Gauge equivalence of holomorphic structures}

Given $f,\,g,\,h\,\in\,\mathcal{A}(\mathbb{C}\mathrm{P}_q^1)$, consider the mixed gauge equation $\overline\partial g\,=\,g\overline\partial f-\overline\partial h\,.\,g$ in Definition \ref{mixed gauge equation}. Strictly speaking, as an element in $\mathcal{B}(H_+,\,H_-)$, this equation is understood by  $\overline\partial g
\,=\,\pi_-(g)\overline\partial f-\overline\partial h\,\pi_+(g)$ due to \eqref{representation}. Recall that $\pi_\pm$ are not individually faithful, although their direct sum is. If $g$ is an invertible solution of this equation, then $\pi_\pm(g)\,\neq\, 0$ because $1\,=\,\pi_\pm(gg^{-1})\,=\,\pi_\pm(g)\pi_\pm(g^{-1})$. Thus, when we restrict our attention only to the invertible solutions of the equation $\overline
\partial g\,=\,\pi_-(g)\overline\partial f-\overline\partial h\,\pi_+(g)$, non-faithfulness of $\pi_\pm$ does not create any trouble. So we can safely omit the representation symbols $\pi_\pm$.

We have already discussed at the end of Section 3.1 that we need to enlarge the domain $\mathcal{A}(\mathbb{C}\mathrm{P}_q^1)$ to find invertible solutions of the gauge/mixed gauge equation. Using the Dirac operator prescription in Section 3.2, combining Lemma \ref{their gauge}, Def. \ref{mixed gauge equation}, and Lemma \ref{Banach algebra}, we finally propose the following.

\begin{definition}\label{main definition}
Two (flat) $\overline\partial$-connections $\overline{\nabla}_j^{(n)}\,:\,\mathcal{L}_n\,\longrightarrow\,
\Omega^{0,1}(\mathbb{C}\mathrm{P}_q^1)\otimes_{\mathcal{A}(\mathbb{C}\mathrm{P}_q^1)} \mathcal{L}_n$, for $j\,=\,1,\,2$, on the line bundle $\mathcal{L}_n$ over the quantum projective line, given by
\[
\overline{\nabla}_1^{(n)}(\cdot)=\overline{\nabla}_0^{(n)}(\cdot)-\Phi_{(n)}(\cdot\otimes\bar\partial f_1)\,,\quad
\overline{\nabla}_2^{(n)}(\cdot)=\overline{\nabla}_0^{(n)}(\cdot)-\Phi_{(n)}(\cdot\otimes\bar\partial f_2)
\]
(see \eqref{all connection}) with $f_1,f_2\in\mathcal{A}(\mathbb{C}\mathrm{P}_q^1)$, are called \emph{gauge equivalent} if the mixed gauge equation $\bar\partial g=g\bar\partial f_1-\bar\partial f_2\,.\,g\,$ has an invertible solution in $\mathscr{D}$.
\end{definition}

Note that when we say that $g\,\in\,\mathscr{D}$ is invertible, it is implicit that the inverse $g^{-1}$ also lies in $\mathscr{D}$ and not just in $C(\mathbb{C}\mathrm{P}_q^1)$, that is, the Banach algebra structure on $\mathscr{D}$ plays a crucial role here. Thus, our definition \ref{main definition} is not strikingly different, but rather natural compared to Definition $3.2$ proposed in \cite{GravemanLaRueMacArthurPesinWei2025NonStandardHolomorphic}.

Note that if we have an invertible element $g\,\in\,\mathscr{D}$ that solves the gauge equation $\overline\partial g\,=\,g\overline\partial f$, then we have $\overline\partial g^{-1}\,=\,-\overline\partial f\,.\,g^{-1}$. Moreover, the
converse is also true, that is, the equality $\overline\partial g^{-1}\,=\,-\overline\partial f\,.\,g^{-1}$ implies $\overline\partial g\,=\,g\overline\partial f$.
\smallskip

\noindent\textbf{Notation:} We write $\overline{\nabla}^{(n)}_{\overline\partial f}\ \equiv_g\
\overline{\nabla}^{(n)}_{\overline\partial h}$ to mean the connections on $\mathcal{L}_n$ are gauge
equivalent, and $\overline{\nabla}^{(n)}_{\overline\partial f}\ \not\equiv_g\
\overline{\nabla}^{(n)}_{\overline\partial h}$ means that they are not so.
\smallskip

This seems to be the proper place to highlight the complicated nature of the gauge equation. Recall the notion of the defective spot from \cite[Def. 3.25]{GravemanLaRueMacArthurPesinWei2025NonStandardHolomorphic}. For $f\in C^*\{1,B_0\}\subseteq C(\mathbb{C}\mathrm{P}_q^1)$, the commutative unital $C^*$-subalgebra of $C(\mathbb{C}\mathrm{P}_q^1)$ generated by the self-adjoint generator $B_0$, we say that $f$ has a defective spot at $n\in\mathbb{N}$ if
\[
\Psi(f)(q^{2n})-\Psi(f)(q^{2n-2})=1
\]
where $\Psi:C^*\{1,B_0\}\to C(\sigma(B_0))$ is the continuous functional calculus isomorphism.

\begin{lemma}\label{complicacy}
There exist at least contably many non-constant $f\in\mathrm{Pol}(B_0)\subseteq\mathcal{A}(\mathbb{C}\mathrm{P}_q^1)$ for which non-zero scalar $\alpha\in\mathbb{C}^\times$ exists such that $\overline{\nabla}^{(n)}_{\overline\partial f}\not\equiv_g\overline{\nabla}^{(n)}_{\overline\partial (\alpha f)}$.
\end{lemma}
\begin{proof}
Let $m\in\mathbb{N}\setminus\{0\}$ and consider $f_m:=(q^{2m}-q^{2m-2})^{-1}\,B_0$. Then, each $f_m$ has a defective spot at $m\in\mathbb{N}\setminus\{0\}$. By \cite[Theorem 3.27]{GravemanLaRueMacArthurPesinWei2025NonStandardHolomorphic}, no invertible solution of the gauge equation $\bar\partial g=g\bar\partial f_m$ exists for any $f_m$. Hence, $\overline{\nabla}^{(n)}_{\overline\partial f_m}\not\equiv_g\overline{\nabla}^{(n)}_0$, where $\overline{\nabla}^{(n)}_0$ is the standard holomorphic structure on the line bundle $\mathcal{L}_n$. Now, consider $g_m:=qf_m$. Observe that there is no defective spot for any $g_m$ as $0<q<1$. Hence, we get $\overline{\nabla}^{(n)}_{\overline\partial g_m}\equiv_g\overline{\nabla}^{(n)}_0$ by the same result of \cite{GravemanLaRueMacArthurPesinWei2025NonStandardHolomorphic}. Consequently, $\overline{\nabla}^{(n)}_{\overline\partial f_m}\not\equiv_g\overline{\nabla}^{(n)}_{\overline\partial g_m}$ for any $m\in\mathbb{N}\setminus\{0\}$.
\end{proof}

Thus, the holomorphic structures corresponding to any given $f\in\mathcal{A}(\mathbb{C}\mathrm{P}_q^1)$ and $\alpha f$, for $\alpha\in\mathbb{C}^\times$, need not be the same up to gauge equivalence. The main result of 
\cite{GravemanLaRueMacArthurPesinWei2025NonStandardHolomorphic} is that for the trivial line bundle $\mathcal{L}_0$, there exists a countable family 
$$\{f_j\,\,\big\vert\,\,\, f_j\,\in\,\mathrm{Pol}(B_0)\subseteq \mathcal{A}(\mathbb{C}\mathrm{P}_q^1)\,,\,j\,\in\,\mathbb{N}\}$$ such that 
$\overline{\nabla}^{(0)}_{\overline\partial f}\,\not\equiv_g\,\overline{\nabla}^{(0)}_0\,=\,\overline\partial$ for each $j$, 
and $\overline{\nabla}^{(0)}_{\overline\partial f_i}\,\not\equiv_g\,\overline{\nabla}^{(0)}_{\overline\partial f_j}$ for 
$i\neq j$. Using the one-to-one correspondence \cite[Proposition 3.7]{GravemanLaRueMacArthurPesinWei2025NonStandardHolomorphic}) between sets of gauge equivalence classes of holomorphic structures on $\mathcal{L}_n\mbox{ and }\mathcal{L}_m$ for any $n\mbox{ and }m$, the authors finally conclude that on each line bundle $\mathcal{L}_n$, there are at least countably many distinct holomorphic structures. A crucial ingredient in their analysis was that invertible and nonzero non-invertible solutions of the gauge equation cannot co-exist.

\begin{lemma}\cite[Lemma 3.6]{GravemanLaRueMacArthurPesinWei2025NonStandardHolomorphic}\label{their}
If there is a nonzero non-invertible solution of $\overline\partial g\,=\,g\overline\partial f$ in
$\mathscr{D}$, then there does not exist any invertible solution.
\end{lemma}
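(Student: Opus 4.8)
The plan is to prove the contrapositive: I will show that if the noncommutative exponential equation $\overline\partial g = g\overline\partial f$ admits an invertible solution $g$ in $\mathscr{D}$, then \emph{every} nonzero solution in $\mathscr{D}$ is a nonzero scalar multiple of $g$ — in particular, every nonzero solution is then invertible, which precludes the coexistence of a nonzero non-invertible one. So first I would suppose that $g\in\mathscr{D}$ is invertible with $\overline\partial g = g\overline\partial f$; since $\mathscr{D}$ is a unital Banach algebra we have $g^{-1}\in\mathscr{D}$, and, as recorded just before the statement, the equation is then equivalent to $\overline\partial(g^{-1}) = -\overline\partial f\cdot g^{-1}$.

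Now let $g_0\in\mathscr{D}$ be any solution, $\overline\partial g_0 = g_0\overline\partial f$. The key step is to apply the derivation property of $\overline\partial$ on the Banach algebra $\mathscr{D}$ (from the lemma that $\mathscr{D}$ is a Banach algebra) to the product $g_0 g^{-1}\in\mathscr{D}$:
\[
\overline\partial(g_0 g^{-1}) \;=\; (\overline\partial g_0)\,g^{-1} + g_0\,\overline\partial(g^{-1}) \;=\; (g_0\overline\partial f)\,g^{-1} - g_0\,(\overline\partial f\cdot g^{-1}) \;=\; 0,
\]
the cancellation being just associativity of the $\mathbb{C}\mathrm{P}_q^1$-bimodule structure on one-forms: concretely, inside $\mathcal{B}(H_+,H_-)$ both middle terms read $\pi_-(g_0)\,(\overline\partial f)\,\pi_+(g^{-1})$, so the identity is nothing but associativity of operator composition. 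Since the kernel of $\overline\partial$ restricted to $\mathscr{D}$ is $\mathbb{C}$ (there are no nonconstant holomorphic elements in $\mathscr{D}\subseteq C(S_q^2)$; cf. Section~2.5), I would conclude $g_0 g^{-1} = \lambda$ for some $\lambda\in\mathbb{C}$, i.e. $g_0 = \lambda g$.

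Finally I would dispatch the two cases: if $\lambda\neq 0$ then $g_0 = \lambda g$ is invertible in $\mathscr{D}$, while if $\lambda = 0$ then $g_0 = 0$. Thus, whenever an invertible solution exists, no nonzero non-invertible solution does, which is exactly the desired contrapositive. I do not expect a genuine obstacle here; the argument is short, and the only points requiring care are that it is the Banach-algebra structure on $\mathscr{D}$ that supplies both $g^{-1}\in\mathscr{D}$ and $g_0 g^{-1}\in\mathscr{D}$, and that the bracketing $(g_0\overline\partial f)\,g^{-1} = g_0\,(\overline\partial f\cdot g^{-1})$ must be read through the representations $\pi_\pm$ as in Section~3.1 — both of which are already available in the text.
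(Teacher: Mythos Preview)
Your proof is correct. The paper does not actually give its own proof of this lemma --- it is cited from \cite{GravemanLaRueMacArthurPesinWei2025NonStandardHolomorphic} without argument --- but your approach is exactly the technique the paper uses in the proof of the Corollary that immediately follows (and again in the lemma on the mixed gauge equation): form $a=g_0g^{-1}\in\mathscr{D}$, use the Leibniz rule on the Banach algebra $\mathscr{D}$ to obtain $\overline\partial a=0$, and then invoke $\ker(\overline\partial|_{\mathscr{D}})=\mathbb{C}$ to conclude $g_0=\lambda g$.
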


\begin{corollary}
The space of invertible solutions of $\overline\partial g\,=\,g\overline\partial f$ is at most one-dimensional.
\end{corollary}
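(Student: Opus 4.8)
The statement concerns the \emph{linear span}: since the invertible elements of $\mathscr{D}$ do not form a vector subspace, ``the space of invertible solutions is at most one-dimensional'' is to be read as the assertion that any two invertible solutions $g_1,g_2\in\mathscr{D}$ of $\overline\partial g=g\overline\partial f$ are scalar multiples of one another (and, if there are no invertible solutions at all, their span is $\{0\}$). The plan is to prove exactly this proportionality by a short computation in the Banach algebra $\mathscr{D}$. Fix invertible solutions $g_1,g_2\in\mathscr{D}$, so that $\overline\partial g_j=\pi_-(g_j)\overline\partial f$ in $\mathcal{B}(H_+,H_-)$ for $j=1,2$. As observed just after the definition of the gauge equation, invertibility of $g_2$ in $\mathscr{D}$ forces $g_2^{-1}\in\mathscr{D}$ and yields the companion identity
\[
\overline\partial\bigl(g_2^{-1}\bigr)\ =\ -\,\overline\partial f\cdot g_2^{-1}\ =\ -\,\overline\partial f\,\pi_+(g_2)^{-1},
\]
using that $\pi_+$ is a unital homomorphism.

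Now consider $g_1g_2^{-1}$, which again lies in $\mathscr{D}$ and is invertible there. Since $\overline\partial$ is a derivation on $\mathscr{D}$ (the lemma that $\mathscr{D}$ is a unital Banach algebra), the Leibniz rule, written in the spectral-triple realization, gives
\begin{align*}
\overline\partial\bigl(g_1g_2^{-1}\bigr)
&=\ \pi_-(g_1)\,\overline\partial\bigl(g_2^{-1}\bigr)+\overline\partial(g_1)\,\pi_+\bigl(g_2^{-1}\bigr)\\
&=\ -\,\pi_-(g_1)\,\overline\partial f\,\pi_+(g_2)^{-1}+\pi_-(g_1)\,\overline\partial f\,\pi_+(g_2)^{-1}\\
&=\ 0 .
\end{align*}
Hence $g_1g_2^{-1}\in\mathscr{D}$ belongs to $\ker(\overline\partial)=\mathbb{C}\cdot 1$ (as recalled earlier), and being invertible it must equal $\lambda\cdot 1$ for some $\lambda\in\mathbb{C}\setminus\{0\}$; thus $g_1=\lambda g_2$, which is the claim. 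In fact the computation uses only that $g_2$ is invertible, so the same argument shows that \emph{every} solution $g\in\mathscr{D}$ of $\overline\partial g=g\overline\partial f$ equals $\lambda g_2$ for a scalar $\lambda$ once one invertible solution $g_2$ exists — which simultaneously recovers the preceding lemma (a nonzero non-invertible solution cannot coexist with an invertible one).

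The argument is essentially a one-line computation, so the only point that requires genuine care is the bookkeeping in the spectral-triple picture: one must check that products such as $\pi_-(g_1)\,\overline\partial f\,\pi_+(g_2)^{-1}$ are legitimate, associative compositions of operators among $H_+$ and $H_-$ (this is precisely how the gauge and mixed-gauge equations were set up), and that the non-faithfulness of the individual representations $\pi_\pm$ causes no trouble — which, for invertible elements, was already addressed in Section~3. No further obstacle arises.
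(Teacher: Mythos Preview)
Your proof is correct and follows essentially the same route as the paper: set $a=g_1g_2^{-1}$, use the Leibniz rule together with the gauge equation to deduce $\overline\partial a=0$, and conclude $a\in\mathbb{C}^\times$. The only cosmetic difference is that the paper computes $\overline\partial(ag_2)$ and cancels against $\overline\partial g_1$, whereas you expand $\overline\partial(g_1g_2^{-1})$ directly via the companion identity $\overline\partial(g_2^{-1})=-\overline\partial f\cdot g_2^{-1}$; your added remark that only invertibility of $g_2$ is needed (thereby recovering the preceding lemma) is a nice bonus.
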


\begin{proof}
Let $g_1,\,g_2$ be two invertible solutions, if exist, of the gauge equation $\overline\partial g\,=\,g\overline\partial f$. Take
$a\,=\,g_1g_2^{-1}\in\mathscr{D}$ and obtain $g_1\overline\partial f=\overline\partial g_1\,=\,\overline\partial(ag_2)\,=\,
\overline\partial a\,.\,g_2+ag_2\overline\partial f$. This gives $\overline\partial a\,=\,0$, and consequently,
$g_1\,=\,\lambda g_2$ for some $\lambda\,\in\,\mathbb{C}^\times$, as $\mathrm{ker}(\bar\partial)=\mathbb{C}$.
\end{proof}

However, the situation for the mixed gauge equation $\overline\partial g\,=\,g\overline\partial f-\overline\partial h\,.\,g$ is more tricky. In this case, invertible and nonzero non-invertible solutions can co-exist unless the following condition is met.

\begin{lemma}\label{ours}
Let $f,\, h\,\in\,\mathcal{A}(\mathbb{C}\mathrm{P}^1_q)\setminus\mathbb{C}$. Assume that every $a\,\in\,\mathscr{D}$ of the form $ug$, where $u\,\in\,\mathscr{D}$ is nonzero non-invertible and $g\,\in\,\mathscr{D}$ is invertible, satisfying $\overline\partial a
\,=\,a\overline\partial h-\overline\partial h\,.\,a$ is a scalar multiple of the identity. If there exists a nonzero non-invertible solution $u\,\in\,\mathscr{D}$ of the mixed gauge equation
\[
\overline\partial g\ =\ g\,\overline\partial f \;-\; \overline\partial h\cdot g\,,
\]
then there is no invertible solution of the same equation.
\end{lemma}

\begin{proof}
To prove by contradiction, assume there exists a nonzero non-invertible $u\,\in\,\mathscr{D}$ and an invertible $g\,\in\,\mathscr{D}$ both solving the mixed gauge equation $\overline\partial x\,=\,x\,\overline\partial f \;-\; \overline\partial h\cdot x$. Take $a\,:=\, ug^{-1}\,\in\,\mathscr{D}$. Then,
\[
  \overline\partial u
  \;=\;
  \overline\partial(a g)
  \;=\;
  (\overline\partial a)\,g \;+\; a\,\overline\partial g.
\]
On the other hand, we have
\[
  \overline\partial u
  \;=\;
  u\,\overline\partial f \;-\; \overline\partial h\cdot u
  \;=\;
  a g\,\overline\partial f \;-\; \overline\partial h\cdot a g.
\]
Therefore,
\[
  (\overline\partial a)\,g \;+\; a\,(g\,\overline\partial f - \overline\partial h\cdot g)
  \;=\;
  a g\,\overline\partial f \;-\; \overline\partial h\cdot a g.
\]
The terms $a g\,\overline\partial f$ cancel on both sides and we obtain
\[
  (\overline\partial a)\,g \;-\; a\,\overline\partial h\cdot g
  \;=\;
  -\,\overline\partial h\cdot a g.
\]
Since $g\in\mathscr{D}$ is invertible, it follows that
\[
  \overline\partial a
  \;=\;
  a\,\overline\partial h \;-\; \overline\partial h\cdot a.
\]
By assumption, we have $a \,=\, \lambda$ for some $\lambda\,\in\,\C$. If $\lambda \,=\, 0$, then $u\, =\, 0$, contradicting the assumption that $u$ is nonzero. If $\lambda\,\neq\, 0$, then $u$ is invertible because $g$ is invertible, and thus contradicting the assumption that $u$ is non-invertible. In either case, we reach a
contradiction, and hence no such invertible solution $g\,\in\,\mathscr{D}$ can exist.
\end{proof}
\smallskip


\section{Fixed Points and the Gauge equation}

In this section, we formulate the mixed gauge equation in Definition \ref{main definition} as a fixed--point problem on an appropriate Banach space.

\subsection{A bounded left-inverse for $\overline\partial$ modulo scalars}

Recall that $\overline\partial$ is injective modulo scalars, and we need a bounded operator that works as a left inverse of $\overline\partial$ modulo scalars. In the context of compact quantum metric spaces of Rieffel \cite{Rieffel}, Aguilar and Kaad \cite{AguilarKaad2018PodlesSpectralMetric} constructed one such bounded linear operator, which they refer to as the quantum integral, as it enjoy similar properties to the classical Volterra operator. We briefly recall it here.

First, apply the change of notations $a\leftrightarrow a^*$ and $c\leftrightarrow b^*$ for the $SU_q(2)$-generators in \eqref{SUq(2)} to match with \cite[Section 3]{AguilarKaad2018PodlesSpectralMetric} (see also \cite[Section 2]{Kaad-Kyed}). Then the generators for the coordinate algebra $\mathcal{A}(\mathbb{C}\mathrm{P}_q^1)$ of the quantum projective line in \eqref{Sq^2} change by $B_0\longleftrightarrow A,\,q^{-1}B_+\longleftrightarrow B,\,q^{-1}B_-\longleftrightarrow B^*$ to those in \cite[Section 3]{AguilarKaad2018PodlesSpectralMetric}. The relations in \eqref{Sq^21} coincide with those in \cite{AguilarKaad2018PodlesSpectralMetric}. Recall that the spectrum of the self-adjoint generator $B_0$ of the $C^*$-algebra $C(\mathbb{C}\mathrm{P}_q^1)$ is $\{0\}\cup\{q^{2n}:n\in\mathbb{N}\}$. For $k\in\mathbb{N}$, define the subspace $Y_k:=C(\mathbb{C}\mathrm{P}_q^1)\,.\,\chi_{\{q^{2k}\}}(B_0)\subseteq C(\mathbb{C}\mathrm{P}_q^1)$, which is closed in the $C^*$-norm because $\chi_{\{q^{2k}\}}(B_0)\in C(\mathbb{C}\mathrm{P}_q^1)$ is a projection. Then, $Y_k$ is thought to be fiber over the point $q^{2k}\in\sigma(B_0)$ \cite[Section 6]{AguilarKaad2018PodlesSpectralMetric}. Define
\[
X:=\{\xi\in\mathcal{B}(H_+,\,H_-):\xi\,.\,\chi_{\{q^{2k}\}}(B_0)\in Y_k\,.\,c^2\,\,\forall\,\,k\in\mathbb{N}\},
\]
which is a closed subspace of $\mathcal{B}(H_+,\,H_-)$ \cite[Section 7]{AguilarKaad2018PodlesSpectralMetric}. On each fiber $Y_k$, we have the bounded vertical and horizontal shift operators (see \cite[Eqn. 7.1]{AguilarKaad2018PodlesSpectralMetric})
\begin{align*}
S^V &: Y_k\to Y_k\qquad S^V(f_{n,k}):=f_{n+1,k}\\
S^H &: Y_k\to Y_{k+1}\qquad S^H(f_{n,k}):=f_{n+1,k+1}
\end{align*}
where $f_{n,k}$'s, for $n,k\in\mathbb{N}$, are the matrix units (see \cite[Eqn. 4.2]{AguilarKaad2018PodlesSpectralMetric}) of the standard Podle\'s sphere given by
\[
f_{n,k}:=\begin{cases}
C_{n,k}^{-\frac{1}{2}}\,(q^{-1}B_+)^{n-k}\,.\,\chi_{\{q^{2k}\}}(B_0)\,, & n\ge k\,; \\
C_{n,k}^{-\frac{1}{2}}\,(q^{-1}B_-)^{k-n}\,.\,\chi_{\{q^{2k}\}}(B_0)\,, & 0\le n\le k-1\,.
\end{cases}
\]
with certain scalars $C_{n,k}$ defined in \cite[Eqn. 4.1]{AguilarKaad2018PodlesSpectralMetric}. Consider the bounded diagonal operator $\Gamma:Y_k\to Y_k$, for each $k\in\mathbb{N}$, given by
\[
\Gamma(f_{n,k}):=\begin{cases}
q^{n-k-1}(1-q^{2(k+1)})^{\frac{1}{2}}(1-q^{2n})^{-\frac{1}{2}}f_{n,k}\,, & n\ge k+1\,; \\
0\,, & 0\le n\le k\,.
\end{cases}
\]
which has the operator norm equal to one. The vertical quantum integral $\int^V:X\to C(\mathbb{C}\mathrm{P}_q^1)$ is defined by
\[
\int^V\xi:=-\sum_{m=0}^\infty\,\frac{q^m(1-q^2)}{\sqrt{1-q^{2(m+1)}}}\sum_{\ell=0}^m\,\Gamma(S^H\Gamma)^{m-1}S^V\left(\xi\,.\,\chi_{\{q^{2\ell}\}}(B_0)(c^*)^2q^{-2\ell}\right)\,.
\]
Now, for each $k\in\mathbb{N}\setminus\{0\}$, define the bounded diagonal operator $\Delta:Y_k\to Y_k$ by
\[
\Delta(f_{n,k}):=\begin{cases}
q^{k-n+1}(1-q^{2(n-1)})^{\frac{1}{2}}(1-q^{2k})^{-\frac{1}{2}}f_{n,k}\,, & 0< n\le k\,; \\
0\,, & \mbox{otherwise}.
\end{cases}
\]
with operator norm less than equal to $q$ for all $k\in\mathbb{N}$. Furthermore, for each $k\in\mathbb{N}$, consider the orthogonal projection $P:Y_k\to Y_k$ given by
\[
P(f_{n,k}):=\begin{cases}
f_{n,k}\,, & 0 \le n\le k\,; \\
0\,, & n \ge k+1\,.
\end{cases}
\]
The horizontal quantum integral $\int^H:X\to C(\mathbb{C}\mathrm{P}_q^1)$ is defined by
\[
\int^H\xi:=\sum_{m=0}^\infty\,\frac{q^m(1-q^2)}{\sqrt{1-q^{2(m+1)}}}\sum_{\ell=m+1}^\infty\,(S^H)^*((S^H)^*\Delta)^{\ell-m-1}PS^V\left(\xi\,.\,\chi_{\{q^{2\ell}\}}(B_0)(c^*)^2q^{-2\ell}\right)
\]
Finally, the quantum integral
\[
\int\ \,:\ \, X\ \longrightarrow\ C(\mathbb{C}\mathrm{P}_q^1)
\]
is defined by $\int:=\int^V+\int^H$ \cite[Deinition 7.11]{AguilarKaad2018PodlesSpectralMetric}, and the operator norm of $\int$ is bounded by $\,2\sqrt{1-q^2}\,(1-q)^{-2}$ (notice the $q$-dependence).

Recall that we have an isomorphism of $C^*$-algebras $C(\mathbb{C}\mathrm{P}_q^1)\,\cong\,\mathcal{K}(\ell^2(\mathbb{N}))\oplus\mathbb{C}$ (minimal unitization of the compact operators). Consider the positive linear functional $\psi_\infty$ on the $C^*$-algebra $C(\mathbb{C}\mathrm{P}_q^1)$ defined by $\psi_\infty(x\oplus\lambda)\,:=\,\lambda$ using the isomorphism. In fact, $\psi_\infty$ is the unique character of $C(\mathbb{C}\mathrm{P}_q^1)$. Although $\psiinf$ may be denoted by the more familiar notation $\varepsilon$, we stick to the notation used in \cite{AguilarKaad2018PodlesSpectralMetric} for the reader's convenience.

We need the following result rather than the explicit construction of the quantum integral, and we shall frequently use it in the sequel without further mention.

\begin{theorem}\cite[Theorem 7.1]{AguilarKaad2018PodlesSpectralMetric}\label{integral}
For any $x\in\mathcal{A}(\mathbb{C}\mathrm{P}_q^1)$, one has
\begin{equation*}
\int \overline{\partial} x\ \, =\ \, x - \psiinf(x)\,.\,1_{\mathcal{A}(\mathbb{C}\mathrm{P}_q^1)}\,.
\end{equation*}
\end{theorem}

In fact, the above result holds beyond $\mathcal{A}(\mathbb{C}\mathrm{P}_q^1)$ (for any elements in the Lipschitz algebra corresponding to the D\polhk{a}browski--Sitarz spectral triple), but at the moment we only need this much. For our purpose, in fact we need the domain of the quantum integral restricted to $\overline{\overline\partial(\mathcal{A}(\mathbb{C}\mathrm{P}_q^1))}^{\norm{\cdot}_{\mathrm{op}}}\,\subseteq\, X$ only, the closure in $\mathcal{B}(H_+,\,H_-)$ of the image of $\mathcal{A}(\mathbb{C}\mathrm{P}_q^1)$ under $\bar\partial$, as the following result demonstrates. Recall the Banach algebra $\mathscr{D}$ from Lemma \ref{Banach algebra}.

\begin{proposition}\label{main 1}
For any $f\,\in\,\mathcal{A}(\mathbb{C}\mathrm{P}_q^1)$ and $g\,\in\,\mathscr{D}$,
\[
f\overline\partial g\,,\,\overline{\partial} f \cdot g\,\ \in\,\ X\,\ =\,  \mathrm{Dom}(\int)
\]
and
\[
\psiinf \left( \int f\overline{\partial} g \right)\,\ =\,\ \psiinf \left( \int \overline{\partial} f \cdot g \right)
\,\ =\,\ 0.
\]
As a consequence, the following two hold: $\overline\partial(fg)\,\in\, X$ and $\int\overline\partial(fg)
\,=\,fg-\psiinf(fg)$.
\end{proposition}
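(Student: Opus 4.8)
The plan is to reduce every assertion to the case $g\in\mathbb{C}\mathrm{P}_q^1$ and feed it into the identity \eqref{integral}, handling a general $g\in\mathscr{D}$ by pure soft analysis: $\mathbb{C}\mathrm{P}_q^1$ is dense in $\mathscr{D}$ for $\norm{\cdot}_{\mathrm{Gr}}$ (being a core of the closed operator $\overline\partial$), $X=\mathrm{Dom}(\int)$ is closed in $\norm{\cdot}_{\mathrm{op}}$, $\int$ is bounded, and $\psi_\infty$ is continuous (indeed multiplicative and unital). Fix $f\in\mathbb{C}\mathrm{P}_q^1$ and $g\in\mathscr{D}$, and pick $g_m\in\mathbb{C}\mathrm{P}_q^1$ with $g_m\to g$ in $\norm{\cdot}_{\mathrm{Gr}}$; then $g_m\to g$ in $\norm{\cdot}_{S_q^2}$ and $\overline\partial g_m\to\overline\partial g$ in $\norm{\cdot}_{\mathrm{op}}$, so using $\norm{\pi_-(f)}_{\mathrm{op}}\le\norm{f}_{S_q^2}$, $\norm{\pi_+(x)}_{\mathrm{op}}\le\norm{x}_{S_q^2}$ and $\norm{\overline\partial f}_{\mathrm{op}}<\infty$, one gets $f\overline\partial g_m=\pi_-(f)\overline\partial g_m\to f\overline\partial g$ and $\overline\partial f\cdot g_m=\overline\partial f\,\pi_+(g_m)\to\overline\partial f\cdot g$ in $\norm{\cdot}_{\mathrm{op}}$.

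The crux is the case $g\in\mathbb{C}\mathrm{P}_q^1$. Here I would invoke $H^{0,1}_{\overline\partial}(\mathcal{L}_0)=0$, which forces $\overline\partial$ to map $\mathbb{C}\mathrm{P}_q^1$ \emph{onto} the one-form bimodule $\Omega^{0,1}(\mathbb{C}\mathrm{P}_q^1)\cong\mathcal{L}_{-2}$, realized inside $\mathcal{B}(H_+,H_-)$ with $\mathbb{C}\mathrm{P}_q^1$-bimodule structure $x\cdot T=\pi_-(x)T$ and $T\cdot x=T\pi_+(x)$. Hence $f\overline\partial g=\pi_-(f)\overline\partial g$ and $\overline\partial f\cdot g=\overline\partial f\,\pi_+(g)$ again lie in $\Omega^{0,1}(\mathbb{C}\mathrm{P}_q^1)=\overline\partial(\mathbb{C}\mathrm{P}_q^1)\subseteq X$, so there exist $w,v\in\mathbb{C}\mathrm{P}_q^1$ with $\overline\partial w=f\overline\partial g$ and $\overline\partial v=\overline\partial f\cdot g$. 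Then \eqref{integral} gives $\int f\overline\partial g=\int\overline\partial w=w-\psi_\infty(w)\in\ker\psi_\infty$ (as $\psi_\infty$ is unital), i.e.\ $\psi_\infty(\int f\overline\partial g)=0$; likewise $\psi_\infty(\int\overline\partial f\cdot g)=0$.

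For an arbitrary $g\in\mathscr{D}$ I would transport these along $g_m\to g$: since $f\overline\partial g_m,\overline\partial f\cdot g_m\in X$ and $X$ is $\norm{\cdot}_{\mathrm{op}}$-closed, $f\overline\partial g,\overline\partial f\cdot g\in X$; and by boundedness of $\int$ and continuity of $\psi_\infty$, $\psi_\infty(\int f\overline\partial g)=\lim_m\psi_\infty(\int f\overline\partial g_m)=0$, and similarly for $\overline\partial f\cdot g$. For the concluding consequence, the Leibniz rule on the Banach algebra $\mathscr{D}$ gives $\overline\partial(fg)=f\overline\partial g+\overline\partial f\cdot g$, which lies in the subspace $X$ as a sum of two of its elements; and to pin down $\int\overline\partial(fg)$, note $fg_m\to fg$ in $\norm{\cdot}_{\mathrm{Gr}}$ (left multiplication by $f$ is $\norm{\cdot}_{\mathrm{Gr}}$-continuous on $\mathscr{D}$), so $\overline\partial(fg_m)\to\overline\partial(fg)$ in $\norm{\cdot}_{\mathrm{op}}$ with each $fg_m\in\mathbb{C}\mathrm{P}_q^1$; passing \eqref{integral} for $fg_m$ to the limit through continuity of $\int$ and $\psi_\infty$ gives $\int\overline\partial(fg)=fg-\psi_\infty(fg)$.

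\textbf{The main obstacle.} The only non-routine point is the bimodule identification used above: that composing an operator in $\overline\partial(\mathbb{C}\mathrm{P}_q^1)$ on the left with $\pi_-(x)$ or on the right with $\pi_+(x)$ returns an element of $\overline\partial(\mathbb{C}\mathrm{P}_q^1)$. This rests on matching the concrete operator products $\pi_-(x)T$ and $T\pi_+(x)$ with the abstract $\mathbb{C}\mathrm{P}_q^1$-bimodule $\Omega^{0,1}(\mathbb{C}\mathrm{P}_q^1)\cong\mathcal{L}_{-2}$, and crucially on the surjectivity $\overline\partial(\mathbb{C}\mathrm{P}_q^1)=\Omega^{0,1}(\mathbb{C}\mathrm{P}_q^1)$ coming from $H^{0,1}_{\overline\partial}(\mathcal{L}_0)=0$; granting that, everything else is continuity, closedness of $X$, and the Leibniz rule, with no further structural input needed.
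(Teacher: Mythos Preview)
Your proposal is correct and follows essentially the same route as the paper's own proof: approximate $g\in\mathscr{D}$ by $g_m\in\mathbb{C}\mathrm{P}_q^1$, use the surjectivity of $\overline\partial:\mathbb{C}\mathrm{P}_q^1\to\Omega^{0,1}$ together with the $\mathbb{C}\mathrm{P}_q^1$-bimodule structure of $\Omega^{0,1}$ to land $f\overline\partial g_m$ and $\overline\partial f\cdot g_m$ in $\overline\partial(\mathbb{C}\mathrm{P}_q^1)\subseteq X$, apply \eqref{integral} there, and pass everything to the limit via closedness of $X$, boundedness of $\int$, and continuity of $\psi_\infty$. The only cosmetic difference is in the final consequence: the paper splits $\int\overline\partial(fg)$ via the Leibniz rule and takes the limit through each summand, whereas you approximate $fg$ directly by $fg_m$ and pass \eqref{integral} to the limit; both are equally valid and amount to the same computation.
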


\begin{proof}
Pick $(g,\, \overline{\partial}g)\,\in\, \mathrm{Graph}(\overline\partial)$ such that $g\,\in\,\mathscr{D}$. There exists a sequence $(g_n,\, \overline{\partial} g_n) \,\in\,\mathrm{Graph}(\overline{\partial})$ such that $\{g_n\}\, \subseteq\,\mathcal{A}(\mathbb{C}\mathrm{P}_q^1)$ with
$g_n \,\to\, g$ in the $C(\mathbb{C}\mathrm{P}_q^1)$-norm,
and $\overline{\partial} g_n \,\to\, \overline{\partial} g$ in the operator norm. Since multiplication is
continuous with respect to the operator norm, we have $f \overline{\partial} g_n \,\to\, f \overline{\partial} g$
for any $f \,\in\, \mathcal{A}(\mathbb{C}\mathrm{P}_q^1)$. Since $\overline\partial\,:\,\mathcal{A}(\mathbb{C}\mathrm{P}_q^1)\,
\longrightarrow\, \Omega^{0,1}$ is surjective and $\Omega^{0,1}$ is a bimodule over $\mathcal{A}(\mathbb{C}\mathrm{P}_q^1)$,
we have $f \overline{\partial} g_n \,\in\, X$ because $\overline\partial(\mathcal{A}(\mathbb{C}\mathrm{P}_q^1))
\,\subseteq\, X$. Since $X \,\subseteq\, \mathcal{B}(H_+,\,H_-)$ is closed, we obtain $f \overline{\partial} g \,\in\, X$.

Now, for the second part, we have
\[
\int f \overline{\partial} g\ =\ \int \lim_n f \overline{\partial} g_n 
\ =\ \lim_n \int f \overline{\partial} g_n,
\]
because $\int$ is a bounded operator. We again use the surjectivity of $\overline\partial\,:\,\mathcal{A}(\mathbb{C}\mathrm{P}_q^1)\,
\longrightarrow\, \Omega^{0,1}$.
For each $n$, we have $f \overline{\partial} g_n\, =\, \overline{\partial} b_n$ for some $b_n
\,\in\,\mathcal{A}(\mathbb{C}\mathrm{P}_q^1)$. Therefore,
\begin{align*}
\psiinf \left( \int f \overline{\partial} g \right) 
  &= \psiinf \Bigl( \lim_n \int f\overline{\partial} g_n \Bigr) \\
  &= \psiinf \Bigl( \lim_n \int \overline{\partial} b_n \Bigr)\\
  &= \psiinf \Bigl( \lim_n (b_n - \psiinf(b_n) \cdot 1) \Bigr) \\
  &= \lim_n \psiinf\bigl(b_n - \psiinf(b_n) \cdot 1\bigr) \\
  &= 0\,,
\end{align*}
as $\psiinf$ is continuous with respect to $\norm{\cdot}_{C(\mathbb{C}\mathrm{P}_q^1)}$.

To prove the statement for $\overline\partial f\,.\,g$, start with the same sequence $\{g_n\}\,\subseteq\,\mathcal{A}(\mathbb{C}\mathrm{P}_q^1)$ such that $g_n\,\to\, g$ in the $\norm{.}_{\mathrm{gr}}$-norm on $\mathscr{D}$. Then, $\overline\partial f\,.\,g_n\,\to\,\overline\partial f\,.\,g$ in the operator norm as the multiplication
is continuous. Using the same reasoning as above, $\overline\partial f\,.\,g\,\in\, X$ and $\int\overline\partial f\,.\,g_n\,\to\,\int\overline\partial f\,.\,g$. We have $\psiinf(\int \overline\partial f.g)\,=\,\lim\psiinf(\int \overline\partial f.g_n)\,=\,0$.

For the final part, we have that $\mathscr{D}$ is an algebra and the Leibniz rule
\[
\overline{\partial}(fg)\ =\ \overline{\partial} f \cdot g + f \overline{\partial} g
\]
holds for $f,\,g\,\in\,\mathscr{D}$. Since $X$ is a closed subspace of
$\mathcal{B}(H_+,\,H_-)$, it follows that $\overline{\partial}(fg)
\,\in \,X$. Finally, using the linearity and continuity of $\int$ the following is deduced:
\begin{eqnarray*}
\int \overline{\partial}(fg)=\int\overline\partial f\,.\,g+\int f\overline\partial g &=& \lim\left(\int\overline\partial f\,.\,g_n+f\overline\partial g_n\right)\\
&=& \lim\int\overline\partial(fg_n)\\
&=& \lim(fg_n-\psiinf(fg_n))\\
&=& fg-\psiinf(fg),
\end{eqnarray*}
which completes the proof.
\end{proof}

\begin{remark}
It is not known {\em a priori} whether $\overline\partial(fg)\,\in\, X$ for $f\,\in\,\mathcal{A}(\mathbb{C}\mathrm{P}_q^1)$ and
$g\,\in\,\mathscr{D}$. This is because the quantum integral is designed to work for elements in the Lipschitz algebra associated with the D\polhk{a}browski-Sitarz spectral triple. For $g\,\in\,\mathscr{D}$, we have that
$\overline\partial g$ is a well-defined bounded operator; however, we do not know whether $\partial g$ is also so. Thus, $fg\,\in\,\mathscr{D}$ need not lie in the Lipschitz algebra {\em a priori}, so we cannot write $\int\overline\partial(fg)$. The above proof is designed accordingly to overcome this difficulty.
\end{remark}

\begin{corollary}\label{001}
For any $g\in\mathscr{D},\,\bar\partial g\in X$ and $$\int\overline\partial g\ =\ g-\psiinf(g).$$
\end{corollary}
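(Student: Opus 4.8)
The plan is to reduce the statement to the case $g\,\in\,\mathbb{C}\mathrm{P}_q^1$, where it is exactly the defining identity \eqref{integral} for the quantum integral, and then pass to the limit using the closedness of $\overline\partial$ together with the continuity of the two maps involved, $\int$ and $\psiinf$.

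First I would note that the corollary is in fact already contained in Proposition~\ref{main 1}. Taking $f\,=\,1\,\in\,\mathbb{C}\mathrm{P}_q^1$ (recall that $\mathbb{C}\mathrm{P}_q^1$ is unital), the ``consequence'' stated there reads $\overline\partial(1\cdot g)\,=\,\overline\partial g\,\in\, X\,=\,\mathrm{Dom}(\int)$ and $\int\overline\partial g\,=\,g-\psiinf(g)$, which is precisely the claim. So in principle nothing further is needed.

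For the sake of a self-contained argument I would also spell out the limiting procedure directly. Given $g\,\in\,\mathscr{D}$, choose a sequence $\{g_n\}\,\subseteq\,\mathbb{C}\mathrm{P}_q^1$ with $g_n\,\to\,g$ in the graph norm $\norm{\cdot}_{\mathrm{Gr}}$; then $g_n\,\to\,g$ in $\norm{\cdot}_{S_q^2}$ and $\overline\partial g_n\,\to\,\overline\partial g$ in $\norm{\cdot}_{\mathrm{op}}$. Since each $\overline\partial g_n$ lies in $\overline\partial(\mathbb{C}\mathrm{P}_q^1)\,\subseteq\, X$ and $X$ is closed in the operator norm, $\overline\partial g\,\in\, X$, so $\int\overline\partial g$ is defined; and since $\int$ is bounded, $\int\overline\partial g_n\,\to\,\int\overline\partial g$ in $\norm{\cdot}_{S_q^2}$. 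On the other hand, by \eqref{integral} we have $\int\overline\partial g_n\,=\,g_n-\psiinf(g_n)\cdot 1$ for every $n$; as $g_n\,\to\,g$ in the $C^*$-norm and $\psiinf$ is a bounded linear functional, $\psiinf(g_n)\,\to\,\psiinf(g)$, hence $g_n-\psiinf(g_n)\cdot 1\,\to\,g-\psiinf(g)\cdot 1$. Matching the two limits yields $\int\overline\partial g\,=\,g-\psiinf(g)$.

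There is essentially no obstacle here; the only points deserving a word of justification are that $\overline\partial g$ genuinely lies in the domain $X$ of the quantum integral (which follows from the op-norm closedness of $X$, or directly from Proposition~\ref{main 1}) and that $\psiinf$ is norm-continuous (immediate, being a positive linear functional on a $C^*$-algebra). Everything else is the standard mechanism of extending an identity from a dense subspace by continuity, legitimate precisely because $\mathscr{D}$ was constructed as the graph-norm completion of $\mathbb{C}\mathrm{P}_q^1$.
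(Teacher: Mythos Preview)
Your proposal is correct and matches the paper's approach exactly: the corollary is stated in the paper without a separate proof precisely because it is the special case $f=1$ of the ``consequence'' in Proposition~\ref{main 1}, which you identify immediately. Your spelled-out limiting argument is also essentially the same one that underlies the proof of Proposition~\ref{main 1} itself, specialized to $f=1$.
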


\begin{lemma}\label{main 2}
For any $f\,\in\,\mathcal{A}(\mathbb{C}\mathrm{P}_q^1)$ and $g \,\in\,\mathscr{D}$,
$$\int f\overline\partial g\,,\,\int\overline{\partial} f\,.\,g\,\in\,\mathscr{D}.$$
Moreover, $(\overline\partial\circ\int)( f\overline\partial g)\,=\, f\overline\partial g$ and
$(\overline\partial\circ\int)( \overline\partial f\,.\,g)\,=\,\overline\partial f\,.\,g$.
\end{lemma}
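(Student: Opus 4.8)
The plan is to derive everything from the closedness of $\overline\partial$ on the domain $\mathscr{D}$, using the approximation by polynomial elements already exploited in Proposition~\ref{main 1}. Fix $f\in\mathbb{C}\mathrm{P}_q^1$ and $g\in\mathscr{D}$, and choose a sequence $\{g_n\}\subseteq\mathbb{C}\mathrm{P}_q^1$ with $g_n\to g$ in the $S_q^2$-norm and $\overline\partial g_n\to\overline\partial g$ in the operator norm, which exists by the very definition of $\mathscr{D}$ as the domain of the closure of $\overline\partial$.

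First I would treat $f\overline\partial g$. By Proposition~\ref{main 1} this element lies in $X=\mathrm{Dom}(\int)$, and since multiplication is continuous in the operator norm we have $f\overline\partial g_n\to f\overline\partial g$ there; boundedness of $\int$ then gives $\int f\overline\partial g_n\to\int f\overline\partial g$ in the $S_q^2$-norm. The point of passing through the $g_n$ is that, by surjectivity of $\overline\partial\colon\mathbb{C}\mathrm{P}_q^1\to\Omega^{0,1}$, each $f\overline\partial g_n$ equals $\overline\partial b_n$ for some $b_n\in\mathbb{C}\mathrm{P}_q^1$, so that by \eqref{integral},
\[
\int f\overline\partial g_n\ =\ \int\overline\partial b_n\ =\ b_n-\psiinf(b_n)\cdot 1\ \in\ \mathbb{C}\mathrm{P}_q^1\ \subseteq\ \mathscr{D}.
\]
Applying $\overline\partial$ and using $\overline\partial 1=0$ gives $\overline\partial\bigl(\int f\overline\partial g_n\bigr)=\overline\partial b_n=f\overline\partial g_n\to f\overline\partial g$ in the operator norm. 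So $\{\int f\overline\partial g_n\}$ is a sequence in $\mathscr{D}$ converging in $S_q^2$-norm to $\int f\overline\partial g$, whose images under $\overline\partial$ converge in operator norm to $f\overline\partial g$; closedness of $\overline\partial$ with domain $\mathscr{D}$ forces $\int f\overline\partial g\in\mathscr{D}$ and $\overline\partial\bigl(\int f\overline\partial g\bigr)=f\overline\partial g$, which is the first assertion. The term $\overline\partial f\cdot g$ is handled identically: $\overline\partial f\cdot g_n\to\overline\partial f\cdot g$ in operator norm since $\Omega^{0,1}$ is a $\mathbb{C}\mathrm{P}_q^1$-bimodule and right multiplication is continuous, each $\overline\partial f\cdot g_n$ lies in $\Omega^{0,1}$ and hence equals $\overline\partial c_n$ for some $c_n\in\mathbb{C}\mathrm{P}_q^1$ by surjectivity, so $\int\overline\partial f\cdot g_n=c_n-\psiinf(c_n)\cdot 1\in\mathscr{D}$ with $\overline\partial\bigl(\int\overline\partial f\cdot g_n\bigr)=\overline\partial f\cdot g_n$; one more application of closedness finishes the argument.

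The only genuinely delicate point is conceptual rather than computational: a priori $\int$ takes values only in $C(\mathbb{C}\mathrm{P}_q^1)$, which is far larger than $\mathscr{D}$, so there is no direct reason for $\int f\overline\partial g$ to have a bounded $\overline\partial$. The whole argument hinges on first exhibiting the approximants $\int f\overline\partial g_n$ inside the polynomial algebra $\mathbb{C}\mathrm{P}_q^1$ --- which uses surjectivity of $\overline\partial$ on $\mathbb{C}\mathrm{P}_q^1$ in an essential way --- and then transporting membership in $\mathscr{D}$ across the limit via the closedness of $\overline\partial$. One should also keep careful track of which norm each convergence takes place in (operator norm on $\mathcal{B}(H_+,H_-)$ versus $S_q^2$-norm on $C(\mathbb{C}\mathrm{P}_q^1)$), since boundedness of $\int$ is precisely what lets one pass from the former to the latter.
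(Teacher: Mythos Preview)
Your argument is correct and follows essentially the same route as the paper: approximate $g$ by polynomial $g_n$, use surjectivity of $\overline\partial$ on $\mathbb{C}\mathrm{P}_q^1$ to write $f\overline\partial g_n=\overline\partial b_n$, compute $\int$ via \eqref{integral}, and invoke closedness of $\overline\partial$ to pass to the limit. The only cosmetic difference is that for the second claim the paper takes a shortcut through the Leibniz rule and the identity $\int\overline\partial(fg)=fg-\psiinf(fg)$ from Proposition~\ref{main 1}, whereas you rerun the direct approximation argument; both are fine.
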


\begin{proof}
For $g\in\mathscr{D}$, consider $\{g_n\}\,\subseteq\,\mathcal{A}(\mathbb{C}\mathrm{P}_q^1)$ such that $g_n\,\to\, g$ in the $C^*$-norm of $C(\mathbb{C}\mathrm{P}_q^1)$ and 
$\overline\partial g_n\,\to\,\overline\partial g$ in the $\norm{.}_{\mathrm{op}}$-norm. Then, $f\overline\partial 
g_n\,\to\, f\overline\partial g$ in the $\norm{.}_{\mathrm{op}}$-norm for any $f\,\in\,\mathcal{A}(\mathbb{C}\mathrm{P}_q^1)$. 
Hence, $\int f\overline\partial g_n\,\to\,\int f\overline\partial g$ in $\norm{\cdot}_{C(\mathbb{C}\mathrm{P}_q^1)}$. Since 
$\overline\partial\,:\,\mathcal{A}(\mathbb{C}\mathrm{P}_q^1)\,\longrightarrow\,\Omega^{0,1}$ is surjective, for each $n$ we have 
$f\overline\partial g_n\,=\,\overline\partial y_n$ for $y_n\,\in\,\mathcal{A}(\mathbb{C}\mathrm{P}_q^1)$, and hence $$\int 
f\overline\partial g_n\ =\ \int\overline\partial y_n\ =\ y_n-\psiinf(y_n)\ \in\ \mathcal{A}(\mathbb{C}\mathrm{P}_q^1).$$ Moreover, 
$(\overline\partial\circ\int)(f\overline\partial g_n)\,=\,\overline\partial y_n\,=\,f\overline\partial g_n$.
Consequently, we have 
$\int f\overline\partial g_n\,\in\,\mathcal{A}(\mathbb{C}\mathrm{P}_q^1)$, and $\int f\overline\partial g_n\,\to\, 
\int f\overline\partial g$ in $C(\mathbb{C}\mathrm{P}_q^1)$-norm with $$\overline\partial\circ\int (f\overline\partial 
g_n)\ =\ f\overline\partial g_n\ \to\ f\overline\partial g$$ in $\norm{\cdot}_{\mathrm{op}}$. Using the closedness of 
$\overline\partial$ it follows that $\int f\overline\partial g\,\in\,\mathscr{D}$ and $(\overline\partial\circ\int)(f\overline\partial g)\,=\,f\overline\partial g$.

The other part follows by the Leibniz rule and Corollary \ref{001}.
\end{proof}

It is now clear that for our purpose, in the context of the mixed gauge equation, we need the quantum integral restricted to 
$\overline{\overline\partial(\mathcal{A}(\mathbb{C}\mathrm{P}_q^1))}^{\norm{\cdot}_{\mathrm{op}}}\ \subseteq\ X$ only.

\noindent\textbf{Notation:} We denote the operator norm of the quantum integral restricted to $\overline{\overline\partial(\mathcal{A}(\mathbb{C}\mathrm{P}_q^1))}\subseteq X$ by $M$ throughout the article. Hence, $M\ \le\ 2\sqrt{1-q^2}\,(1-q)^{-2}$.

\begin{lemma}
If $x\,\in\, X$ satisfies conditions $\int x\,\in\,\mathscr{D}$ and $\overline\partial\circ\int x\,=\,x$, then
\begin{enumerate}
\item[$(i)$] $\norm{\int x}_{C(\mathbb{C}\mathrm{P}_q^1)}\, \le\, M\norm{x}_{\mathrm{op}}$, and
\item[$(ii)$] $\norm{\int x}_{\mathrm{Gr}}\,\le\, (M+1)\norm{x}_{\mathrm{op}}$.
\end{enumerate}
\end{lemma}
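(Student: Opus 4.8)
The proof is essentially immediate from the definitions and the hypotheses, so the plan is short. The point of item (1) is simply that $\int$ has operator norm $M$ when restricted to the relevant subspace, and item (2) is then a one-line consequence of the graph-norm definition together with the hypothesis $\overline\partial\circ\int x=x$.

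\medskip

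Here is the plan in detail. For item (1), since $x\in X$ and, more importantly for the bound, $x$ lies in (the closure of) $\overline\partial(\mathbb{C}\mathrm{P}_q^1)$ — which is where the constant $M$ is defined — I would apply the definition $M=\norm{\int|_{\overline{\overline\partial(\mathbb{C}\mathrm{P}_q^1)}}}_{\mathrm{op}}$ directly. One subtlety: the hypothesis only says $\int x\in\mathscr{D}$ and $\overline\partial\circ\int x=x$; I should note that these conditions force $x=\overline\partial(\int x)$ with $\int x\in\mathscr{D}$, hence $x$ is a $\norm{.}_{\mathrm{op}}$-limit of elements of $\overline\partial(\mathbb{C}\mathrm{P}_q^1)$ (approximate $\int x$ in graph norm by elements of $\mathbb{C}\mathrm{P}_q^1$), so $x$ genuinely lies in $\overline{\overline\partial(\mathbb{C}\mathrm{P}_q^1)}^{\norm{.}_{\mathrm{op}}}$, and the bound $M$ applies: $\norm{\int x}_{S_q^2}\le M\norm{x}_{\mathrm{op}}$.

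\medskip

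For item (2), I would just unwind the graph norm: by definition $\norm{\int x}_{\mathrm{Gr}}=\norm{\int x}_{S_q^2}+\norm{\overline\partial(\int x)}_{\mathrm{op}}$. The second term equals $\norm{\overline\partial\circ\int x}_{\mathrm{op}}=\norm{x}_{\mathrm{op}}$ by the hypothesis $\overline\partial\circ\int x=x$. Combining with item (1) gives $\norm{\int x}_{\mathrm{Gr}}\le M\norm{x}_{\mathrm{op}}+\norm{x}_{\mathrm{op}}=(M+1)\norm{x}_{\mathrm{op}}$, as desired.

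\medskip

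I do not anticipate any real obstacle here; the only thing requiring a moment of care is the justification that the operator-norm bound $M$ (defined on $\overline{\overline\partial(\mathbb{C}\mathrm{P}_q^1)}$) genuinely applies to the given $x\in X$, which is handled by the approximation remark above — and in fact in all the applications in this paper $x$ will be of the form $f\overline\partial g$ or $\overline\partial f\cdot g$, for which Proposition~\ref{main 1} and Lemma~\ref{main 2} already establish membership in $\overline{\overline\partial(\mathbb{C}\mathrm{P}_q^1)}$ and the relation $\overline\partial\circ\int x=x$, so this is not a concern in practice.
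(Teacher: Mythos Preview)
Your proposal is correct and follows essentially the same route as the paper's proof, which is simply the one-line chain $\norm{\int x}_{\mathrm{Gr}}=\norm{\int x}_{S_q^2}+\norm{\overline\partial(\int x)}_{\mathrm{op}}\le M\norm{x}_{\mathrm{op}}+\norm{x}_{\mathrm{op}}$. In fact you are slightly more careful than the paper: your approximation remark (that $\int x\in\mathscr{D}$ forces $x=\overline\partial(\int x)\in\overline{\overline\partial(\mathbb{C}\mathrm{P}_q^1)}^{\norm{\cdot}_{\mathrm{op}}}$, so the restricted-norm constant $M$ genuinely applies) fills a small gap that the paper leaves implicit.
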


\begin{proof}
We have the following:
\begin{center}
$\norm{\int x}_{\mathrm{Gr}}=\norm{\int x}_{C(\mathbb{C}\mathrm{P}_q^1)}+\norm{\overline\partial(\int x)}_{\mathrm{op}}\le M\norm{x}_{\mathrm{op}}+\norm{x}_{\mathrm{op}}=(M+1)\norm{x}_{\mathrm{op}}.$
\end{center}
\end{proof}

\begin{lemma}
For any $u \,\in\, \mathscr{D}$,
\[
\norm{u}_{C(\mathbb{C}\mathrm{P}_q^1)}\ \le\ M \norm{\overline\partial u}_{\mathrm{op}} + |\psiinf(u)|\,,
\]
and
\[
\norm{u}_{\mathrm{Gr}}\ \le\ (M+1)\norm{\overline\partial u}_{\mathrm{op}}+ |\psiinf(u)|\,.
\]
\end{lemma}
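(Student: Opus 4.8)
The plan is to read off both inequalities from the reconstruction identity $u = \int\overline\partial u + \psiinf(u)\cdot 1$ together with the operator-norm bound for $\int$ restricted to $\overline{\overline\partial(\mathbb{C}\mathrm{P}_q^1)}$. First I would record that for $u\in\mathscr{D}$ the element $\overline\partial u$ lies in the closed subspace $\overline{\overline\partial(\mathbb{C}\mathrm{P}_q^1)}\subseteq X$: indeed, by definition of $\mathscr{D}$ there is a sequence $\{u_n\}\subseteq\mathbb{C}\mathrm{P}_q^1$ with $\overline\partial u_n\to\overline\partial u$ in $\norm{\cdot}_{\mathrm{op}}$, and each $\overline\partial u_n$ lies in $\overline\partial(\mathbb{C}\mathrm{P}_q^1)$. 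Hence the relevant norm is exactly $M=\norm{\int|_{\overline{\overline\partial(\mathbb{C}\mathrm{P}_q^1)}}}_{\mathrm{op}}$.

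Next I would invoke the Corollary (``$\int\overline\partial g = g-\psiinf(g)$ for $g\in\mathscr{D}$'') in the form
\[
u \;=\; \int\overline\partial u \;+\; \psiinf(u)\cdot 1 .
\]
Setting $x:=\overline\partial u$, I would check that $x$ satisfies the hypotheses of the preceding lemma: $\int x = u-\psiinf(u)\in\mathscr{D}$, and applying $\overline\partial$ to the displayed identity gives $\overline\partial\circ\int x = \overline\partial u = x$. Therefore part~(1) of that lemma yields $\norm{\int\overline\partial u}_{S_q^2}\le M\norm{\overline\partial u}_{\mathrm{op}}$. Combining this with the triangle inequality applied to the reconstruction identity, and using $\norm{1}_{S_q^2}=1$, gives
\[
\norm{u}_{S_q^2}\;\le\;\norm{\textstyle\int\overline\partial u}_{S_q^2}+|\psiinf(u)|\;\le\; M\norm{\overline\partial u}_{\mathrm{op}}+|\psiinf(u)|,
\]
which is the first claimed estimate.

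Finally, for the graph norm I would simply add $\norm{\overline\partial u}_{\mathrm{op}}$ to both sides:
\[
\norm{u}_{\mathrm{Gr}} \;=\; \norm{u}_{S_q^2}+\norm{\overline\partial u}_{\mathrm{op}} \;\le\; (M+1)\norm{\overline\partial u}_{\mathrm{op}}+|\psiinf(u)|.
\]
There is no genuine obstacle here; the only point requiring a line of justification is that $\overline\partial u\in X$ with $\int(\overline\partial u)$ behaving as in the preceding lemma, and this is already supplied by the Corollary, so the argument is essentially a two-line consequence of the results established above.
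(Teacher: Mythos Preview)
Your argument is correct and is essentially the same as the paper's: both use the identity $u=\int\overline\partial u+\psiinf(u)\cdot 1$ from the Corollary, bound $\norm{\int\overline\partial u}_{S_q^2}$ by $M\norm{\overline\partial u}_{\mathrm{op}}$, and then add $\norm{\overline\partial u}_{\mathrm{op}}$ for the graph-norm estimate. Your extra remarks (that $\overline\partial u\in\overline{\overline\partial(\mathbb{C}\mathrm{P}_q^1)}$ and that the preceding lemma applies) are slight elaborations but do not change the route.
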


\begin{proof}
Since $u \,\in\, \mathscr{D}$, it follows that $\int \overline{\partial} u\, =\,
u - \psiinf(u)$ by Corollary \ref{001}. Thus,
\[
  \norm{u}_{C(\mathbb{C}\mathrm{P}_q^1)}\ \le\ \norm{\int \overline{\partial} u}_{C(\mathbb{C}\mathrm{P}_q^1)}
  + \norm{\psiinf(u) \cdot 1}_{C(\mathbb{C}\mathrm{P}_q^1)}\ \le\ M \norm{\overline{\partial} u}_{\mathrm{op}} + |\psiinf(u)|.
\]
Moreover,
\[
\norm{u}_{\mathrm{Gr}}\ =\ \norm{u}_{C(\mathbb{C}\mathrm{P}_q^1)}+\norm{\overline\partial(u)}_{\mathrm{op}}\
\le\ (M+1)\norm{\overline\partial(u)}_{\mathrm{op}}+ |\psiinf(u)|.
\]
\end{proof}

\begin{lemma}\label{Banach subspace}
Define $H\ :=\ \mathscr{D} \cap \ker \psiinf$. Then $H$ is a Banach subspace of
$(\mathscr{D}, \norm{\cdot}_{\mathrm{Gr}})$.
\end{lemma}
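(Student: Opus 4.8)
The plan is to observe that, by definition, a Banach subspace of $(\mathscr{D},\norm{\cdot}_{\mathrm{Gr}})$ is simply a linear subspace that is closed in the graph norm, and then to verify these two properties for $H=\mathscr{D}\cap\ker\psiinf$ directly. Linearity is immediate: $\mathscr{D}$ is a vector space (in fact a unital Banach algebra by the earlier lemma), and $\ker\psiinf$ is the kernel of the linear functional $\psiinf$, so $H$, being the intersection of two linear subspaces of $C(\mathbb{C}\mathrm{P}_q^1)$, is again a linear subspace. So the whole content is the closedness statement, and once that is established, completeness of $H$ in the restricted norm follows from the standard fact that a closed subspace of a Banach space is itself a Banach space (together with the fact that $(\mathscr{D},\norm{\cdot}_{\mathrm{Gr}})$ is complete, which is already recorded above).

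For closedness I would take a sequence $\{u_n\}\subseteq H$ with $u_n\to u$ in $\norm{\cdot}_{\mathrm{Gr}}$. Since $(\mathscr{D},\norm{\cdot}_{\mathrm{Gr}})$ is a Banach space, the limit $u$ lies in $\mathscr{D}$, so it only remains to check $\psiinf(u)=0$. The key point is that $\psiinf$ is $\norm{\cdot}_{S_q^2}$-continuous on $C(\mathbb{C}\mathrm{P}_q^1)$: under the isomorphism $C(\mathbb{C}\mathrm{P}_q^1)\cong\mathcal{K}(\ell^2(\mathbb{N}))\oplus\mathbb{C}$ it is the state picking out the scalar summand, hence a bounded positive functional with $|\psiinf(x)|\le\norm{x}_{S_q^2}$. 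Combining this with the trivial norm domination $\norm{v}_{S_q^2}\le\norm{v}_{\mathrm{Gr}}$ (immediate from $\norm{v}_{\mathrm{Gr}}=\norm{v}_{S_q^2}+\norm{\overline\partial v}_{\mathrm{op}}$), we get $|\psiinf(u_n)-\psiinf(u)|\le\norm{u_n-u}_{S_q^2}\le\norm{u_n-u}_{\mathrm{Gr}}\to 0$. Since $\psiinf(u_n)=0$ for every $n$, this forces $\psiinf(u)=0$, so $u\in H$ and $H$ is closed.

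I do not anticipate a genuine obstacle here; the statement is essentially a bookkeeping lemma. The only step worth stating explicitly is the $\norm{\cdot}_{S_q^2}$-continuity of $\psiinf$ and the fact that the graph norm dominates the $S_q^2$-norm, which together guarantee that $\norm{\cdot}_{\mathrm{Gr}}$-convergence is strong enough to pass the constraint $\psiinf(\,\cdot\,)=0$ to the limit; everything else is formal.
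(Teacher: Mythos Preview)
Your proposal is correct and follows essentially the same argument as the paper: both take a graph-norm convergent sequence in $H$, use that graph-norm convergence implies $S_q^2$-norm convergence, and then invoke continuity of $\psiinf$ on $C(S_q^2)$ to pass the vanishing condition to the limit. Your version is slightly more explicit about the bound $|\psiinf(x)|\le\norm{x}_{S_q^2}$ and the linearity of $H$, but the substance is identical.
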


\begin{proof}
Let $\{\xi_n\}\, \subseteq\, H$, and suppose that $\xi_n \,\to\, \xi$ in $\mathscr{D}$ with respect to the graph norm. Then, $\xi_n\, \to\, \xi$ in the $C(\mathbb{C}\mathrm{P}_q^1)$-norm. Since $\psiinf$ is a continuous linear functional on $C(\mathbb{C}\mathrm{P}_q^1)$, it follows that $\psiinf(\xi_n) \,\to\, \psiinf(\xi)$. But $\psiinf(\xi_n)\, =\, 0$ for all $n$, so
$\psiinf(\xi)\, =\, 0$, i.e.,\ $\xi \,\in\, H$.
\end{proof}

Notice that no element in $H$ can be invertible as $\psiinf$ is a character of $C(\mathbb{C}\mathrm{P}_q^1)$.


\subsection{Fixed-point formulation for $\overline{\partial}u = -\overline{\partial}f\cdot u$}

Recall that $\overline{\partial}u =u\overline{\partial}f$ is identical to $\overline{\partial}u^{-1}=-\overline{\partial}f\cdot u^{-1}$ if $u$ is ivertible. We relate solutions to the gauge equation $\overline{\partial}u = -\overline{\partial}f\,.\,u$ with fixed points of maps in a Banach space. Recall that $\mathscr{D}$ is a Banach space equipped with the graph norm and $H$ is a Banach subspace of it (Lemma \ref{Banach subspace}).

For $f\in\mathcal{A}(\mathbb{C}\mathrm{P}_q^1)$ and $u\in\mathscr{D}$, by Proposition \ref{main 1} and Lemma \ref{main 2} the following linear map
\begin{align*}
\phi:\mathscr{D} &\longrightarrow\mathscr{D}\\
u &\longmapsto -\int\overline\partial f\,.\,u
\end{align*}
is well-defined with $\mathrm{Ran}(\phi)\subseteq H$. Moreover, by Lemma \ref{main 2} we have fixed points of $\phi$ giving solutions of the gauge equation $\overline\partial u=-\overline\partial f\,.\,u$, but not conversely.

We first show that under certain condition on $f\in\mathcal{A}(\mathbb{C}\mathrm{P}_q^1)$, there does not exist any non-zero solution of the gauge equation in the Banach subspace $H$.

\begin{lemma}\label{no solution}
For any $f\in\mathcal{A}(\mathbb{C}\mathrm{P}_q^1)$ with $\norm{\overline\partial f}_{\mathrm{op}}<\frac{1}{M}$, the restriction of the map $\phi$ to the Banach subspace $H$ of $\mathscr{D}$ is a contraction map. Consequently, the gauge equation $\overline\partial u=-\overline\partial f\,.\,u$ admits no nonzero solution in $H$.
\end{lemma}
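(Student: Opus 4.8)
The plan is to exhibit $\phi$ restricted to $H$ as a contraction for the graph norm and then read off the non-existence statement essentially from the Banach fixed point theorem. Two preliminary observations are needed. First, $\phi$ does map $H$ into $H$: for $u\in\mathscr{D}$ one has $\int\overline\partial f\cdot u\in\mathscr{D}$ by Lemma~\ref{main 2} and $\psiinf\bigl(\int\overline\partial f\cdot u\bigr)=0$ by Proposition~\ref{main 1}, so $\phi(u)=-\int\overline\partial f\cdot u\in H$. Second — and this is the quantitative point — on $H$ the graph norm is controlled by the seminorm $\norm{\overline\partial(\cdot)}_{\mathrm{op}}$ in a sharp way: since $\psiinf$ vanishes on $H$, the estimate $\norm{w}_{S_q^2}\le M\norm{\overline\partial w}_{\mathrm{op}}+|\psiinf(w)|$ reduces to $\norm{w}_{S_q^2}\le M\norm{\overline\partial w}_{\mathrm{op}}$ for $w\in H$, and feeding this into $\norm{w}_{\mathrm{Gr}}=\norm{w}_{S_q^2}+\norm{\overline\partial w}_{\mathrm{op}}$ yields, after rearranging,
\[
\norm{w}_{S_q^2}\ \le\ \frac{M}{M+1}\,\norm{w}_{\mathrm{Gr}}\qquad(w\in H).
\]

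With these in hand, I would estimate directly. Fix $u,v\in H$ and set $w:=u-v\in H$; by linearity of $\int$, $\phi(u)-\phi(v)=-\int\overline\partial f\cdot w$. Now $\overline\partial f\cdot w\in X$ (Proposition~\ref{main 1}), $\int\overline\partial f\cdot w\in\mathscr{D}$, and $\overline\partial\bigl(\int\overline\partial f\cdot w\bigr)=\overline\partial f\cdot w$ (Lemma~\ref{main 2}), so the graph-norm estimate for $\int$ established above applies (with $x=\overline\partial f\cdot w$) and gives $\norm{\phi(u)-\phi(v)}_{\mathrm{Gr}}\le (M+1)\norm{\overline\partial f\cdot w}_{\mathrm{op}}$. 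Using submultiplicativity of the operator norm, $\norm{\overline\partial f\cdot w}_{\mathrm{op}}\le\norm{\overline\partial f}_{\mathrm{op}}\,\norm{w}_{S_q^2}$, and then the displayed inequality on $H$, one obtains
\[
\norm{\phi(u)-\phi(v)}_{\mathrm{Gr}}\ \le\ (M+1)\,\norm{\overline\partial f}_{\mathrm{op}}\,\norm{w}_{S_q^2}\ \le\ M\,\norm{\overline\partial f}_{\mathrm{op}}\,\norm{u-v}_{\mathrm{Gr}}.
\]
Since $M\norm{\overline\partial f}_{\mathrm{op}}<1$ by hypothesis, $\phi$ is a contraction on the Banach subspace $H$.

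For the consequence, the Banach fixed point theorem gives that $\phi$ has a unique fixed point in $H$; as $\phi(0)=0$, that fixed point is $0$. If $g\in H$ solved $\overline\partial g=-\overline\partial f\cdot g$, then applying $\int$ and using $\int\overline\partial g=g-\psiinf(g)=g$ (valid since $g\in H$) we would get $g=-\int\overline\partial f\cdot g=\phi(g)$, i.e.\ $g$ is a fixed point of $\phi$ in $H$, forcing $g=0$. Hence the gauge equation $\overline\partial g=-\overline\partial f\cdot g$ has no nonzero solution in $H$.

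I do not expect a genuine obstacle here: all the analytic content has already been packaged into Proposition~\ref{main 1}, Lemma~\ref{main 2}, and the norm estimates preceding this lemma. The one place to take care is the sharp inequality $\norm{w}_{S_q^2}\le\frac{M}{M+1}\norm{w}_{\mathrm{Gr}}$ on $H$: it is exactly this that upgrades the naive contraction threshold $\norm{\overline\partial f}_{\mathrm{op}}<\frac{1}{M+1}$ (which is all one would get on $\mathscr{D}$) to the stated $\norm{\overline\partial f}_{\mathrm{op}}<\frac{1}{M}$, and it relies essentially on membership in $\ker\psiinf$. Equivalently, one could replace the graph norm on $H$ by the equivalent norm $\norm{\overline\partial(\cdot)}_{\mathrm{op}}$ (a genuine norm on $H$ because $\ker\overline\partial=\C$ and $\psiinf(1)=1$, so the only constant surviving in $H$ is $0$) and run the identical contraction estimate there.
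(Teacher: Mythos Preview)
Your proof is correct and follows essentially the same route as the paper's: both use that $\phi$ maps $H$ to $H$, bound $\norm{\phi(u)}_{\mathrm{Gr}}\le (M+1)\norm{\overline\partial f}_{\mathrm{op}}\norm{u}_{S_q^2}$, and then exploit $\norm{u}_{S_q^2}\le M\norm{\overline\partial u}_{\mathrm{op}}$ on $H$ to reach the contraction constant $M\norm{\overline\partial f}_{\mathrm{op}}$, concluding via the Banach fixed point theorem. The only cosmetic difference is that you package the key step as the single inequality $\norm{w}_{S_q^2}\le\frac{M}{M+1}\norm{w}_{\mathrm{Gr}}$, whereas the paper performs the equivalent rearrangement inline by replacing one copy of $\norm{u}_{S_q^2}$ with $M\norm{\overline\partial u}_{\mathrm{op}}$ to assemble $\norm{u}_{\mathrm{Gr}}$.
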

\begin{proof}
For $u\in H$,
\begin{align*}
\norm{\phi(u)}_{\mathrm{Gr}}=\norm{\int \overline{\partial} f \cdot u}_{\mathrm{Gr}} 
&= \norm{\int \overline{\partial} f \cdot u}_{C(\mathbb{C}\mathrm{P}_q^1)}+\norm{\overline{\partial} \left( \int \overline{\partial} f \cdot u \right)}_{\mathrm{op}}\\
&\le M \norm{\overline{\partial} f \cdot u}_{\mathrm{op}}+\norm{\overline{\partial} f \cdot u}_{\mathrm{op}}\\
&\le M \norm{\overline{\partial} f}_{\mathrm{op}}\norm{u}_{C(\mathbb{C}\mathrm{P}_q^1)}+\norm{\overline{\partial} f}_{\mathrm{op}} \norm{u}_{C(\mathbb{C}\mathrm{P}_q^1)}.
\end{align*}
Now $\int\overline\partial u=u$ since $u\in H$. This implies that $\norm{u}_{C(\mathbb{C}\mathrm{P}_q^1)}\le M\norm{\overline\partial u}_{\mathrm{op}}$. Therefore,
\begin{align*}
\norm{\phi(u)}_{\mathrm{Gr}}  &\le M \norm{\overline{\partial} f}_{\mathrm{op}} \norm{u}_{C(\mathbb{C}\mathrm{P}_q^1)}+M\norm{\overline{\partial} f}_{\mathrm{op}}\norm{\overline\partial u}_{\mathrm{op}}\\
&\le M \norm{\overline{\partial} f}_{\mathrm{op}}\norm{u}_{\mathrm{Gr}}\\
&< \norm{u}_{\mathrm{Gr}}
\end{align*}
by our hypothesis on $f$. Since $\phi$ is a contraction, it has a unique fixed point by the Banach fixed point theorem. Because $\phi$ is linear, $0$ is always a fixed point, and this proves that no nonzero solution to the gauge equation is possible in $H$ for such an $f$.
\end{proof}

Recall that if any nonzero solution of the gauge equation exists in $H$, it cannot be invertible. Consequently, we can immediately conclude that the gauge equation has no invertible solution in $\mathscr{D}$ due to Lemma \ref{their}.


\subsection{Gauge equivalence with the standard holomorphic structure}

We further exploit the fixed--point formulation, break the linearity of $\phi$ considered in the previous subsection, and show that an invertible solution of the gauge equation exists in $\mathscr{D}$ under certain condition on $f$.
\begin{lemma}
For any $f \in \mathcal{A}(\mathbb{C}\mathrm{P}_q^1)$ satisfying $\norm{\overline{\partial} f}_{\mathrm{op}}<\frac{1}{2M}$, the gauge equation $\overline\partial g=-\overline\partial f\,.\,g$ has a solution $g_0\in\mathscr{D}$ such that $g_0$ is an invertible element when viewed in the $C^*$-algebra $C(\mathbb{C}\mathrm{P}_q^1)$.
\end{lemma}
\begin{proof}
The case of $f$ being a scalar is trivial, and hence we assume that $f$ is non-scalar. Consider the following equation
\[
  \overline{\partial} v = \overline{\partial} f(1-v)
\]
in $v \in \mathscr{D}$ for given $f\in\mathcal{A}(\mathbb{C}\mathrm{P}_q^1)$. If $1\neq v_0\in\mathscr{D}$ is a solution with $\norm{v_0}_{C(\mathbb{C}\mathrm{P}_q^1)} < 1$, then $g_0 := 1 - v_0$ is an invertible element in $\mathscr{D}$ with $g_0^{-1}\in C(\mathbb{C}\mathrm{P}_q^1)$, and $g_0$ satisfies
\[
  \overline{\partial} g_0 = -\overline{\partial} f \cdot g_0
\]
because
\[
  \overline{\partial} g_0 = -\overline{\partial} v_0 
  = -\overline{\partial} f(1-v_0) = -\overline{\partial} f \cdot g_0.
\]
Thus, it is enough to find a solution $v_0\in\mathscr{D}$ with $\norm{v_0}_{C(\mathbb{C}\mathrm{P}_q^1)}<1$ for the equation
\[
  \overline{\partial} v = \overline{\partial} f(1-v).
\]
Motivated by the map $\phi$ considered earlier, we now consider $\Phi: H \to H$ defined by
\[
  \Phi(v) := \int \overline{\partial} f(1-v).
\]
That $\Phi$ is well-defined is clear by Proposition \ref{main 1} and Lemma \ref{main 2}. Note that $\Phi$ is not linear since $f$ is non-scalar.

Now for $v_1,v_2\in H$, we have
\begin{align*}
d_{\mathrm{Gr}}(\Phi(v_1),\Phi(v_2)) &=\norm{\Phi(v_1) - \Phi(v_2)}_{\mathrm{Gr}}\\
  &= \norm{\int \overline{\partial} f (1-v_1) - \int \overline{\partial} f (1-v_2)}_{\mathrm{Gr}} \\
  &= \norm{\int \overline{\partial} f (v_2 - v_1)}_{\mathrm{Gr}} \\
  &< \norm{v_2 - v_1}_{\mathrm{Gr}},
\end{align*}
where the last line follows along the same line as in the proof of Lemma \ref{no solution} using our hypothesis on $f$. Thus, $\Phi$ is a contraction map, and hence by the Banach fixed point theorem, there is a unique fixed point $v_0\in H$ of $\Phi$, i.e., we have $v_0=\Phi(v_0)$. Therefore, $\overline{\partial} v_0 = \overline\partial\circ\int\left(\overline{\partial} f(1-v_0)\right)=\overline{\partial} f(1-v_0)$ by Lemma \ref{main 2}. Moreover, it is clear that $v_0\neq 1$ by the definition of $H$.

Finally, we show that $\norm{v_0}_{C(\mathbb{C}\mathrm{P}_q^1)}<1$. We have
\[
  v_0 = \Phi(v_0) = \int \overline{\partial} f(1-v_0).
\]
Therefore,
\[
\norm{v_0}_{C(\mathbb{C}\mathrm{P}_q^1)} \le M \norm{\overline{\partial} f}_{\mathrm{op}} (1 + \norm{v_0}_{C(\mathbb{C}\mathrm{P}_q^1)}).
\]
This says that
\[
 \big(1 - M \norm{\overline{\partial} f}_{\mathrm{op}}\big) \norm{v_0}_{C(\mathbb{C}\mathrm{P}_q^1)}\le M \norm{\overline{\partial} f}_{\mathrm{op}}
 \]
and consequently,
\begin{eqnarray}\label{90}
 \norm{v_0}_{C(\mathbb{C}\mathrm{P}_q^1)}\le \frac{M \norm{\overline{\partial} f}_{\mathrm{op}}}{1 - M \norm{\overline{\partial} f}_{\mathrm{op}}}.
\end{eqnarray}
This implies that $\norm{v_0}_{C(\mathbb{C}\mathrm{P}_q^1)} < 1$ because $\norm{\overline{\partial} f}_{\mathrm{op}} < \frac{1}{2M}$ by the assumption.
\end{proof}

Note that $\norm{v_0}_{C(\mathbb{C}\mathrm{P}_q^1)}<1$ implies that $1-v_0$ is an invertible element in $\mathscr{D}\subseteq C(\mathbb{C}\mathrm{P}_q^1)$ with the inverse in the $C^*$-algebra $C(\mathbb{C}\mathrm{P}_q^1)$. If we need $(1-v_0)^{-1}\in\mathscr{D}$, then it suffices to show that $\norm{v_0}_{\mathrm{Gr}}<1$, as $\mathscr{D}$ is a Banach algebra equipped with the graph norm.

Recall the expression of any arbitrary connection $\overline{\nabla}^{(n)}_{\overline{\partial} f}$ on the line bundle $\mathcal{L}_n$ from \eqref{all connection}. The following gives a criterion for many gauge equivalent holomorphic structures.

\begin{proposition}
For any $f \in \mathcal{A}(\mathbb{C}\mathrm{P}_q^1)$ satisfying $\norm{\overline\partial f}_{\mathrm{op}}<\frac{1}{3M}$, the holomorphic structure $\overline{\nabla}^{(n)}_{\overline{\partial} f}$ is gauge equivalent
to the standard holomorphic structure $\overline{\nabla}_0^{(n)}$ on $\mathcal{L}_n$.
\end{proposition}
\begin{proof}
The case of $f$ being a scalar is trivial, and hence we assume that $f$ is non-scalar. The proof of the previous lemma shows that it is enough to prove $\norm{v_0}_{C(\mathbb{C}\mathrm{P}_q^1)}<\frac{1}{2}$ and $\norm{\overline\partial v_0}_{\mathrm{op}}<\frac{1}{2}$, as this would ensure that $\norm{v_0}_{\mathrm{Gr}}<1$, and consequently $g_0^{-1}\in\mathscr{D}$ for $g_0:=1-v_0$. The present stronger hypothesis ensures that $\norm{v_0}_{C(\mathbb{C}\mathrm{P}_q^1)}<\frac{1}{2}$ from \eqref{90}. We only need to establish $\norm{\overline\partial v_0}_{\mathrm{op}}<\frac{1}{2}$. We have the fixed point equation
\[
 v_0 = \Phi(v_0) = \int \overline{\partial} f(1-v_0).
\]
But, then
\begin{eqnarray*}
\norm{\overline\partial v_0}_{\mathrm{op}}=\norm{\int\,\overline\partial f(1-v_0)}_{\mathrm{op}} &\le& M\norm{\overline\partial f}_{\mathrm{op}}(1+\norm{v_0}_{C(\mathbb{C}\mathrm{P}_q^1)})\\
&<& M\times \frac{1}{3M}\times\frac{3}{2}=\frac{1}{2}
\end{eqnarray*}
using our hypothesis on $f$ and the fact that $\norm{v_0}_{C(\mathbb{C}\mathrm{P}_q^1)}<\frac{1}{2}$. Since $\norm{v_0}_{\mathrm{Gr}}<1$, the element $g_0$ is invertible in the Banach algebra $\mathscr{D}$, and consequently, we have an invertible solution in $\mathscr{D}$ for the gauge equation $\overline\partial g=g\,\overline\partial f$. The result now follows from Definition \ref{main definition}.
\end{proof}

The above is an improvement of \cite[Corollary 3.28]{GravemanLaRueMacArthurPesinWei2025NonStandardHolomorphic} which works only for elements $f\in\mathrm{Pol}(B_0)\subseteq\mathcal{A}(\mathbb{C}\mathrm{P}_q^1)$.


\subsection{Two holomorphic structures and the mixed gauge equation}

We now consider two holomorphic structures $\overline{\nabla}^{(n)}_{\overline{\partial} f}$ and
$\overline{\nabla}^{(n)}_{\overline{\partial} h}$ on $\mathcal{L}_n$ (see \eqref{all connection}), where $f,h\in\mathcal{A}(\mathbb{C}\mathrm{P}_q^1)$, and the associated mixed gauge equation $\overline{\partial} g = g \overline{\partial} f - \overline{\partial} h \cdot g$ for $g\in\mathscr{D}$. For any given $f,\,h\,\in\,\mathcal{A}(\mathbb{C}\mathrm{P}_q^1)$ when we ask whether $\overline{\nabla}^{(n)}_{\overline\partial f}$ and $\overline{\nabla}^{(n)}_{\overline\partial h}$ are gauge equivalent or not, it is understood that $h-f$ is not a scalar, as otherwise the connections would coincide, and hence be trivially gauge equivalent.

Given $g\in\mathscr{D}$, consider $v = 1+g\in\mathscr{D}$. Then $\overline{\partial} v = \overline{\partial} g$, and the equation $\overline{\partial} g = g \overline{\partial} f - \overline{\partial} h \cdot g$ transforms to the following equation~:
\begin{eqnarray}\label{lem:mixed-fixed-point}
\overline{\partial} v &=& (v-1) \overline{\partial} f - \overline{\partial} h (v-1)\nonumber\\
&=& v \overline{\partial} f - \overline{\partial} h \cdot v + \overline{\partial}(h-f).
\end{eqnarray}

Define $\Psi: H \to H$ by
\begin{eqnarray}\label{the main map}
v \longmapsto \int \left(v \overline{\partial} f - \overline{\partial} h \cdot v\right) + \int\overline{\partial}(h-f),
\end{eqnarray}
where $H=\mathscr{D}\cap\ker\psi_\infty$. This map is well-defined by Proposition \ref{main 1} and Lemma \ref{main 2}. Note that $\Psi$ is not a linear map unless $f$ and $h$ differ by a scalar. Observe that if $v_0$ is a fixed point of $\Psi$, then $v_0$ is a solution of the equation \eqref{lem:mixed-fixed-point}, in view of Lemma \ref{main 2}.

\begin{lemma}\label{used next}
Let $f, h \in \mathcal{A}(\mathbb{C}\mathrm{P}_q^1)$ be such that
\[
\norm{\overline\partial f}_{\mathrm{op}}+\norm{\overline\partial h}_{\mathrm{op}}<\frac{1}{M}
\]
Then, the mixed gauge equation
\[
  \overline{\partial} g = g \overline{\partial} f - \overline{\partial} h \cdot g
\]
has a solution $g_0\in\mathscr{D}$. Moreover, if $f-h$ is non-scalar, then $g_0$ is also non-scalar.
\end{lemma}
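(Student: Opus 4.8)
The plan is to reduce the solvability of the mixed gauge equation to a fixed-point statement for the map $\Psi:H\to H$ introduced above, and then verify that $\Psi$ is a contraction under the stated hypothesis. By Lemma \ref{lem:mixed-fixed-point}, it suffices to produce a fixed point $v_0\in H$ of $\Psi$; the associated element $g_0=v_0-1\in\mathscr{D}$ will then solve $\overline\partial g=g\overline\partial f-\overline\partial h\cdot g$. So the real content is a contraction estimate for $\Psi$ on the Banach space $(H,\norm{\cdot}_{\mathrm{Gr}})$.

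First I would estimate, for $v_1,v_2\in H$, the difference
\[
\Psi(v_1)-\Psi(v_2)\ =\ \int\bigl((v_1-v_2)\overline\partial f-\overline\partial h\cdot(v_1-v_2)\bigr),
\]
since the inhomogeneous term $\int\overline\partial(h-f)$ cancels. Setting $w=v_1-v_2\in H$ and applying Lemma 5.1 together with the norm bound $\norm{\int x}_{\mathrm{Gr}}\le(M+1)\norm{x}_{\mathrm{op}}$ from Lemma 5.4 (valid once we know $\int x\in\mathscr{D}$ and $\overline\partial\circ\int x=x$, which holds here by Proposition \ref{main 1} and Lemma \ref{main 2}), one gets
\[
\norm{\Psi(v_1)-\Psi(v_2)}_{\mathrm{Gr}}\ \le\ (M+1)\bigl(\norm{\overline\partial f}_{\mathrm{op}}+\norm{\overline\partial h}_{\mathrm{op}}\bigr)\norm{w}_{S_q^2}.
\]
Here I would use, exactly as in the proof of Lemma 5.2, that $w\in H=\mathscr{D}\cap\ker\psiinf$ forces $\int\overline\partial w=w$, hence $\norm{w}_{S_q^2}\le M\norm{\overline\partial w}_{\mathrm{op}}$; combining the two pieces of the graph norm then yields a bound of the shape $\norm{\Psi(v_1)-\Psi(v_2)}_{\mathrm{Gr}}\le M\bigl(\norm{\overline\partial f}_{\mathrm{op}}+\norm{\overline\partial h}_{\mathrm{op}}\bigr)\norm{w}_{\mathrm{Gr}}$, which is strictly less than $\norm{w}_{\mathrm{Gr}}$ precisely when $\norm{\overline\partial f}_{\mathrm{op}}+\norm{\overline\partial h}_{\mathrm{op}}<\tfrac{1}{M}$. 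Thus $\Psi$ is a contraction on $H$, and the Banach fixed point theorem provides a unique $v_0\in H$ with $\Psi(v_0)=v_0$.

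Finally I would invoke Lemma \ref{lem:mixed-fixed-point}: the fixed point $v_0$ solves $\overline\partial v=v\overline\partial f-\overline\partial h\cdot v+\overline\partial(h-f)$, so $g_0:=v_0-1\in\mathscr{D}$ is a (nonzero, since $\psiinf(g_0)=-1$) solution of the mixed gauge equation. For the last assertion, suppose $f-h$ is non-scalar but $g_0$ were a scalar $\lambda$; then $\overline\partial g_0=0$, and the mixed gauge equation would give $0=\lambda\,\overline\partial f-\overline\partial h\cdot\lambda=\lambda\,\overline\partial(f-h)$, forcing $\lambda=0$ (contradicting $\psiinf(g_0)=-1$) or $\overline\partial(f-h)=0$, i.e. $f-h\in\ker\overline\partial=\mathbb{C}$, contrary to hypothesis. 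Hence $g_0$ is non-scalar. The only step requiring care is the verification that $v\overline\partial f-\overline\partial h\cdot v\in X=\mathrm{Dom}(\int)$ with the correct $\psiinf$-vanishing and $\overline\partial\circ\int$-inversion properties for $v\in\mathscr{D}$ (not merely $v\in\mathbb{C}\mathrm{P}_q^1$); this is precisely what Proposition \ref{main 1} and Lemma \ref{main 2} were established for, applied to each summand separately, so no genuine obstacle remains.
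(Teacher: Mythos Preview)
Your proposal is correct and follows essentially the same route as the paper: reduce to a fixed point of $\Psi$ on $H$, show $\Psi$ is a contraction via the estimate $(M+1)\norm{w}_{S_q^2}\le M\norm{w}_{S_q^2}+M\norm{\overline\partial w}_{\mathrm{op}}=M\norm{w}_{\mathrm{Gr}}$ for $w\in H$, apply Banach's fixed point theorem, and deduce non-scalarity of $g_0=v_0-1$ from $\psiinf(g_0)=-1$ together with $\ker\overline\partial=\mathbb{C}$. The paper organizes the contraction estimate slightly differently (keeping the $M\norm{w}_{S_q^2}$ and $\norm{w}_{S_q^2}$ terms separate before replacing only the latter by $M\norm{\overline\partial w}_{\mathrm{op}}$), but this is the same computation as yours.
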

\begin{proof}
First observe that for $v_1, v_2 \in H$, we have
\[
\norm{v_1-v_2}_{C(\mathbb{C}\mathrm{P}_q^1)}=\norm{\int\overline\partial(v_1-v_2)}_{C(\mathbb{C}\mathrm{P}_q^1)}\le M\norm{\overline\partial(v_1-v_2)}_{\mathrm{op}}.
\]
This gives us the following estimate (applying Lemma \ref{main 2}):
\begin{eqnarray*}
& & d_{\mathrm{Gr}}(\Psi(v_1),\Psi(v_2))\\
&=& \norm{\Psi(v_1) - \Psi(v_2)}_{\mathrm{Gr}}\\
&=& \norm{\int (v_1 - v_2)\overline\partial f - \int \overline\partial h(v_1 - v_2)}_{\mathrm{Gr}} \\
&\le& \norm{\int (v_1 - v_2)\overline\partial f}_{\mathrm{Gr}}+\norm{\int \overline\partial h(v_1 - v_2)}_{\mathrm{Gr}}\\
&\le& \norm{\int (v_1 - v_2)\overline\partial f}_{C(\mathbb{C}\mathrm{P}_q^1)}+\norm{\int \overline\partial h(v_1-v_2)}_{C(\mathbb{C}\mathrm{P}_q^1)}+\norm{(v_1 - v_2)\overline\partial f}_{\mathrm{op}}+\norm{\overline\partial h(v_1 - v_2)}_{\mathrm{op}}\\
&\le& M\norm{v_1 - v_2}_{C(\mathbb{C}\mathrm{P}_q^1)}\left(\norm{\overline\partial f}_{\mathrm{op}}+\norm{\overline\partial h}_{\mathrm{op}}\right)+\norm{v_1 - v_2}_{C(\mathbb{C}\mathrm{P}_q^1)}\left(\norm{\overline\partial f}_{\mathrm{op}}+\norm{\overline\partial h}_{\mathrm{op}}\right)\\
&\le& M\norm{v_1 - v_2}_{C(\mathbb{C}\mathrm{P}_q^1)}\left(\norm{\overline\partial f}_{\mathrm{op}}+\norm{\overline\partial h}_{\mathrm{op}}\right)+M\norm{\overline\partial(v_1 - v_2)}_{\mathrm{op}}\left(\norm{\overline\partial f}_{\mathrm{op}}+\norm{\overline\partial h}_{\mathrm{op}}\right)\\
&=& M\left(\norm{\overline\partial f}_{\mathrm{op}}+\norm{\overline\partial h}_{\mathrm{op}}\right)\norm{v_1-v_2}_{\mathrm{Gr}}\\
&<& \norm{v_1 - v_2}_{\mathrm{Gr}}
\end{eqnarray*}
by our assumption on $f\mbox{ and }h$. So $\Psi$ defined in \eqref{the main map} is a contraction map on the Banach space $H$. By the Banach fixed point theorem, there exists a unique fixed point $v_0$ of $\Psi$. Thus, we have \eqref{lem:mixed-fixed-point} for $v=v_0$, and consequently $g_0 := v_0 - 1$ is a solution of the mixed gauge equation $\overline\partial g=g\overline\partial f-\overline{\partial}h\,.\,g$.

Finally, if $f-h$ is non-scalar, then $v_0\neq 0$ as it satisfies \eqref{lem:mixed-fixed-point}. Since $v_0\in H$ it cannot be equal to any nonzero scalar. Consequently, $g_0=v_0-1$ is non-scalar.
\end{proof}

\begin{lemma}
Let $f,h \in \mathcal{A}(\mathbb{C}\mathrm{P}_q^1)$ be such that $f-h$ is non-scalar and
\[
\norm{\overline{\partial} h}_{\mathrm{op}} + \norm{\overline{\partial} f}_{\mathrm{op}}<\frac{1}{M}.
\]
Then,
\[
\norm{h - f - \psiinf(h-f)}_{C(\mathbb{C}\mathrm{P}_q^1)} < 1.
\]
\end{lemma}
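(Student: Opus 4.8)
The plan is to observe that the element $h-f-\psiinf(h-f)$ is nothing but the natural ``$\ker\psiinf$-part'' of $h-f$, and then to feed it directly into the norm estimate already established for elements of $\mathscr{D}$ (the inequality $\norm{u}_{S_q^2}\le M\norm{\overline\partial u}_{\mathrm{op}}+|\psiinf(u)|$ for $u\in\mathscr{D}$). No genuinely new argument is needed; the point is simply that the hypothesis is calibrated so that the estimate closes with a strict inequality.

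First I would set $u:=h-f-\psiinf(h-f)$. Since $f,h\in\mathbb{C}\mathrm{P}_q^1\subseteq\mathscr{D}$ and $\psiinf(h-f)\cdot 1\in\mathbb{C}\mathrm{P}_q^1$, we have $u\in\mathscr{D}$; moreover, using that $\psiinf$ is linear and unital, $\psiinf(u)=\psiinf(h-f)-\psiinf(h-f)\psiinf(1)=0$, so in fact $u\in H=\mathscr{D}\cap\ker\psiinf$. Next, since $\overline\partial$ annihilates scalars, $\overline\partial u=\overline\partial(h-f)$.

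Now I would apply the estimate $\norm{u}_{S_q^2}\le M\norm{\overline\partial u}_{\mathrm{op}}+|\psiinf(u)|$ recorded earlier. Because $\psiinf(u)=0$, this reduces to $\norm{u}_{S_q^2}\le M\,\norm{\overline\partial(h-f)}_{\mathrm{op}}$. Finally, the triangle inequality in $\mathcal{B}(H_+,H_-)$ gives $\norm{\overline\partial(h-f)}_{\mathrm{op}}\le\norm{\overline\partial h}_{\mathrm{op}}+\norm{\overline\partial f}_{\mathrm{op}}<\tfrac{1}{M}$ by hypothesis, whence $\norm{u}_{S_q^2}<M\cdot\tfrac{1}{M}=1$, which is precisely the claim.

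There is no real obstacle here: the result is an immediate corollary of the already-proven norm estimate, and the only subtlety is bookkeeping—namely that the assumed bound $\norm{\overline\partial h}_{\mathrm{op}}+\norm{\overline\partial f}_{\mathrm{op}}<1/M$ is strict, which is exactly what is needed to deduce a strict inequality at the end. I would also note in passing that the non-scalar hypothesis on $f-h$ plays no role in this particular estimate (it is included because the lemma is invoked later, where that hypothesis is needed).
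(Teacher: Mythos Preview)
Your proof is correct and essentially identical to the paper's argument: the paper writes $h-f-\psiinf(h-f)=\int\overline\partial(h-f)$ and then bounds $\norm{\int\overline\partial(h-f)}_{S_q^2}\le M\norm{\overline\partial(h-f)}_{\mathrm{op}}<1$, which is exactly the special case $\psiinf(u)=0$ of the norm estimate you invoke. Your closing remark that the non-scalar hypothesis is not actually used in this estimate is also accurate.
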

\begin{proof}
We have for $f,h\in\mathcal{A}(\mathbb{C}\mathrm{P}_q^1)$
\[
\int \overline{\partial}(h-f) = h-f - \psiinf(h-f).
\]
Therefore,
\begin{align*}
\norm{\int \overline{\partial}(h-f)}_{C(\mathbb{C}\mathrm{P}_q^1)} 
&\le M \norm{\overline{\partial} h - \overline{\partial} f}_{\mathrm{op}} \\
&\le M \left(\norm{\overline{\partial} h}_{\mathrm{op}} + \norm{\overline{\partial} f}_{\mathrm{op}}\right)\\
&< 1.
\end{align*}
\end{proof}

\begin{lemma}
Suppose $f, h \in\mathcal{A}(\mathbb{C}\mathrm{P}_q^1)$ be such that $f-h$ is non-scalar and
\[
\norm{\overline{\partial} h}_{\mathrm{op}} + \norm{\overline{\partial} f}_{\mathrm{op}}
< \frac{1 - \norm{h - f - \psiinf(h-f)}_{C(\mathbb{C}\mathrm{P}_q^1)}}{M}.
\]
Then, we have a solution $g_0\in\mathscr{D}$ to the mixed gauge equation $\overline\partial g=g\overline\partial f-\overline\partial h\,.\,g$ such that $g_0$ is invertible when viewed as an element in the $C^*$-algebra $C(\mathbb{C}\mathrm{P}_q^1)$.
\end{lemma}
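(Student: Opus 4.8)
The plan is to recycle the contraction map $\Psi\colon H\to H$, $v\mapsto \int\bigl(v\,\overline\partial f-\overline\partial h\cdot v\bigr)+\int\overline\partial(h-f)$ introduced above, and to extract from its fixed point a sharper $\norm{\cdot}_{S_q^2}$-bound. First I would note that the standing hypothesis forces $\norm{\overline\partial f}_{\mathrm{op}}+\norm{\overline\partial h}_{\mathrm{op}}<\tfrac1M$, because the numerator $1-\norm{h-f-\psiinf(h-f)}_{S_q^2}$ is at most $1$; in particular the hypothesis also guarantees that $\norm{h-f-\psiinf(h-f)}_{S_q^2}<1$, so the right-hand side is positive and the statement is non-vacuous. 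Applying the preceding lemma that establishes $\Psi$ is a contraction on the Banach space $H$ when $\norm{\overline\partial f}_{\mathrm{op}}+\norm{\overline\partial h}_{\mathrm{op}}<\tfrac1M$, the Banach fixed point theorem yields a unique fixed point $v_0\in H$, and by Lemma~\ref{lem:mixed-fixed-point} the element $g_0:=v_0-1\in\mathscr{D}$ solves $\overline\partial g=g\,\overline\partial f-\overline\partial h\cdot g$. Since $f-h$ is non-scalar, $v_0\neq 1$ and $g_0\neq 0$. It now suffices to prove $\norm{v_0}_{S_q^2}<1$: then $-g_0=1-v_0$ is invertible in the unital $C^*$-algebra $C(S_q^2)$ via the Neumann series, whence $g_0^{-1}=-(1-v_0)^{-1}\in C(S_q^2)$.

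Next I would estimate $\norm{v_0}_{S_q^2}$ straight from the fixed-point identity
\[
  v_0=\Psi(v_0)=\int\bigl(v_0\,\overline\partial f-\overline\partial h\cdot v_0\bigr)+\int\overline\partial(h-f).
\]
For the first summand, the bound $\norm{\int x}_{S_q^2}\le M\norm{x}_{\mathrm{op}}$ established earlier, together with sub-multiplicativity of the operator norm on $\mathcal{B}(H_+,H_-)$ (Proposition~\ref{main 1} guarantees $v_0\overline\partial f$ and $\overline\partial h\cdot v_0$ lie in $X=\mathrm{Dom}(\int)$), gives $\bigl\|\int(v_0\overline\partial f-\overline\partial h\cdot v_0)\bigr\|_{S_q^2}\le M\bigl(\norm{\overline\partial f}_{\mathrm{op}}+\norm{\overline\partial h}_{\mathrm{op}}\bigr)\norm{v_0}_{S_q^2}$. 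For the second summand, \eqref{integral} applied to $h-f\in\mathbb{C}\mathrm{P}_q^1$ shows $\int\overline\partial(h-f)=h-f-\psiinf(h-f)$, so its $S_q^2$-norm is exactly $\norm{h-f-\psiinf(h-f)}_{S_q^2}$. Writing $K:=M\bigl(\norm{\overline\partial f}_{\mathrm{op}}+\norm{\overline\partial h}_{\mathrm{op}}\bigr)$ and $c:=\norm{h-f-\psiinf(h-f)}_{S_q^2}$, the triangle inequality yields $\norm{v_0}_{S_q^2}\le K\norm{v_0}_{S_q^2}+c$. The hypothesis is precisely $K<1-c$; in particular $K<1$, so rearranging gives $\norm{v_0}_{S_q^2}\le c/(1-K)<1$ since $c<1-K$. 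Combining with the first paragraph finishes the proof.

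There is no genuinely hard step here: the content is the self-consistency of the hypothesis (its right-hand side is automatically positive, and forces $1-v_0$ to be a small perturbation of the identity) plus the one-line rearrangement of the fixed-point estimate. The only thing worth flagging is that, exactly as in the passage from the $\Phi$-lemma to Theorem~4.10, one cannot upgrade $g_0^{-1}\in C(S_q^2)$ to $g_0^{-1}\in\mathscr{D}$ under this hypothesis alone: that would require $\norm{v_0}_{\mathrm{Gr}}<1$, hence an additional estimate controlling $\norm{\overline\partial v_0}_{\mathrm{op}}=\norm{\int\overline\partial f(1-v_0)-\cdots}_{\mathrm{op}}$ and therefore a correspondingly stronger bound on $\norm{\overline\partial f}_{\mathrm{op}}+\norm{\overline\partial h}_{\mathrm{op}}$.
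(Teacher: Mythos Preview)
Your proof is correct and follows essentially the same route as the paper's: reduce to $\norm{\overline\partial f}_{\mathrm{op}}+\norm{\overline\partial h}_{\mathrm{op}}<\tfrac1M$, invoke the contraction $\Psi$ on $H$ to produce the fixed point $v_0$ and hence $g_0=v_0-1$, then feed the fixed-point identity back into the $\norm{\cdot}_{S_q^2}$-bound to get $\norm{v_0}_{S_q^2}\le c/(1-K)<1$ from $K<1-c$. Your closing remark that only $g_0^{-1}\in C(S_q^2)$ (not $g_0^{-1}\in\mathscr{D}$) is obtained here is exactly the point the paper makes before strengthening the hypothesis in the subsequent theorem.
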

\begin{proof}
We have
\[
0<\frac{1 - \norm{h - f - \psiinf(h-f)}_{C(\mathbb{C}\mathrm{P}_q^1)}}{M} < \frac{1}{M},
\]
and hence
\[
\norm{\overline{\partial} h}_{\mathrm{op}} + \norm{\overline{\partial} f}_{\mathrm{op}}<\frac{1}{M}
\]
by our hypothesis. Therefore, by Lemma \ref{used next}, the mixed gauge equation
\[
\overline\partial g = g \overline\partial f - \overline\partial h g
\]
has a non-scalar solution $g_0$ in $\mathscr{D}$. Moreover, $g_0=v_0-1$ with $v_0\in H$ and $v_0$ is a non-scalar solution of the equation
\[
\overline\partial v = v \overline\partial f - \overline\partial h\,.\,v + \overline\partial(h-f)
\]
in \eqref{lem:mixed-fixed-point}. Since $v_0\in H$,
\[
v_0 = \int (v_0 \overline\partial f - \overline\partial h\,.\,v_0) + \int \overline\partial(h-f),
\]
and hence,
\[
\norm{v_0}_{C(\mathbb{C}\mathrm{P}_q^1)} \le M \norm{v_0}_{C(\mathbb{C}\mathrm{P}_q^1)}\left(\norm{\overline\partial f}_{\mathrm{op}}+\norm{\overline\partial h}_{\mathrm{op}}\right)+\norm{\int \overline\partial(h-f)}_{C(\mathbb{C}\mathrm{P}_q^1)}.
\]
Therefore,
\[
0<\norm{v_0}_{C(\mathbb{C}\mathrm{P}_q^1)}\left( 1 - M\left(\norm{\overline\partial f}_{\mathrm{op}}+\norm{\overline\partial h}_{\mathrm{op}}\right)\right)\le\norm{\int \overline\partial(h-f)}_{C(\mathbb{C}\mathrm{P}_q^1)}=\norm{h - f - \psiinf(h-f)}_{C(\mathbb{C}\mathrm{P}_q^1)}.
\]
By our assumption,
\[
\norm{\overline\partial f}_{\mathrm{op}}+\norm{\overline\partial h}_{\mathrm{op}} 
< \frac{1 - \norm{h - f - \psiinf(h-f)}_{C(\mathbb{C}\mathrm{P}_q^1)}}{M},
\]
and hence
\[
1 - M\left(\norm{\overline\partial f}_{\mathrm{op}}+\norm{\overline\partial h}_{\mathrm{op}}\right)
> \norm{h - f - \psiinf(h-f)}_{C(\mathbb{C}\mathrm{P}_q^1)}.
\]
Therefore,
\begin{eqnarray}\label{gui}
\frac{\norm{h - f - \psiinf(h-f)}_{C(\mathbb{C}\mathrm{P}_q^1)}}
{1 - M\norm{\overline\partial f}_{\mathrm{op}}-M\norm{\overline\partial h}_{\mathrm{op}} } 
< 1,
\end{eqnarray}
which implies that $\norm{v_0}_{C(\mathbb{C}\mathrm{P}_q^1)} < 1$. Thus, $g_0 = v_0 - 1\in\mathscr{D}$ is an invertible element in the $C^*$-algebra $C(\mathbb{C}\mathrm{P}_q^1)$ and a solution of the mixed gauge equation $\overline\partial g = g \overline\partial f - \overline\partial h\,.\,g$.
\end{proof}

Notice that if $f-h\in\mathbb{C}$ for $f,h\in\mathcal{A}(\mathbb{C}\mathrm{P}_q^1)$, then the $\bar\partial$-connections $\overline{\nabla}^{(n)}_{\overline\partial f}\mbox{ and }\overline{\nabla}^{(n)}_{\overline\partial h}$ are automatically gauge equivalent, being the same.

\begin{proposition}\label{many gauge}
Suppose $f, h \in \mathcal{A}(\mathbb{C}\mathrm{P}_q^1)$ be such that $f-h$ is non-scalar and
\[
\norm{\overline\partial f}_{\mathrm{op}}+\norm{\overline\partial h}_{\mathrm{op}} 
< \min\left\{\frac{1}{3}\,,\,\frac{1 - 2\norm{h - f - \psiinf(h-f)}_{C(\mathbb{C}\mathrm{P}_q^1)}}{M}\right\}.
\]
Then, $\overline{\nabla}^{(n)}_{\overline\partial f}$ and $\overline{\nabla}^{(n)}_{\overline\partial h}$ are gauge equivalent holomorphic structures on $\mathcal{L}_n$.
\end{proposition}
\begin{proof}
In view of the previous lemma, it suffices to show that $\norm{v_0}_{C(\mathbb{C}\mathrm{P}_q^1)}<1/2$ and $\norm{\overline\partial v_0}_{\mathrm{op}}<1/2$. From \eqref{gui}, we immediately get $\norm{v_0}_{C(\mathbb{C}\mathrm{P}_q^1)}<1/2$ under the stronger assumption of the present. Since $v_0$ satisfies the equation in \eqref{lem:mixed-fixed-point}, we get
\begin{eqnarray*}
\norm{\overline\partial v_0}_{\mathrm{op}} &\leq& \norm{v_0}_{C(\mathbb{C}\mathrm{P}_q^1)}\left(\norm{\overline\partial f}_{\mathrm{op}}+\norm{\overline\partial h}_{\mathrm{op}}\right)+\norm{\overline\partial(h-f)}_{\mathrm{op}}\\
&\le& (\norm{v_0}_{C(\mathbb{C}\mathrm{P}_q^1)}+1)\left(\norm{\overline\partial f}_{\mathrm{op}}+\norm{\overline\partial h}_{\mathrm{op}}\right)\\
&<& \frac{1}{2}.
\end{eqnarray*}
So $\norm{v_0}_{\mathrm{Gr}}<1$, and we conclude that $g_0^{-1}=-(1-v_0)^{-1}\in\mathscr{D}$. Thus, we have an invertible element $g_0\in\mathscr{D}$ (namely, $g_0:=v_0-1$) that solves the mixed gauge equation $\overline\partial g=g\overline\partial f-\overline\partial h\,.\,g$, and hence $\overline{\nabla}^{(n)}_{\overline\partial f}$ and $\overline{\nabla}^{(n)}_{\overline\partial h}$ are gauge equivalent holomorphic structures by Definition \ref{main definition}.
\end{proof}

In the following, we indicate by example how Proposition \ref{many gauge} can be applied to produce many gauge equivalent $\bar\partial$-connections. These examples are beyond the scope of \cite{GravemanLaRueMacArthurPesinWei2025NonStandardHolomorphic}.

\begin{example}\label{example}
Fix any $m_0\in\mathbb{N}\setminus\{0\}$ and consider $x\in\mathcal{A}(\mathbb{C}\mathrm{P}_q^1)$ such that $\psiinf(x)=0$ (i.e. $x$ is non-scalar) and $\norm{x}_{C(\mathbb{C}\mathrm{P}_q^1)}<(2q^{m_0}(1-q))^{-1}$. Observe that $\norm{x}_{C(\mathbb{C}\mathrm{P}_q^1)}<(2q^{m}(1-q))^{-1}$ for all $m\ge m_0$ as $0<q<1$. Now, consider the sequences $\{f_m\},\,\{h_m\}\subseteq\mathcal{A}(\mathbb{C}\mathrm{P}_q^1)$ given by $$f_m:=\begin{cases}
q^mx\,,\quad m\ge m_0\\
x\,,\qquad 0\le m< m_0,
\end{cases}\quad \mbox{and}\quad h_m:=\begin{cases}
q^{m+1}x\,,\quad m\ge m_0\\
x\,,\quad\qquad 0\le m< m_0.
\end{cases}$$ Then $\norm{\overline\partial f_m}_{\mathrm{op}}+\norm{\overline\partial h_m}_{\mathrm{op}}=(q^m+q^{m+1})\norm{\overline\partial x}_{\mathrm{op}}$ for all $m\ge m_0$.
\smallskip

\noindent\textbf{Case~1:} If $M\ge 3$. In this case, for all $m\ge m_0$
\[
\frac{1 - 2\norm{h_m - f_m - \psiinf(h_m-f_m)}_{C(\mathbb{C}\mathrm{P}_q^1)}}{M}=\frac{1-2q^m(1-q)\norm{x}_{C(\mathbb{C}\mathrm{P}_q^1)}}{M}
\]
because $\psiinf(x)=0$. Then, for all $m\ge m_0$
\begin{eqnarray}\label{hh}
\hspace*{1cm}\min\left\{\frac{1}{3}\,,\,\frac{1 - 2\norm{h_m - f_m - \psiinf(h_m-f_m)}_{C(\mathbb{C}\mathrm{P}_q^1)}}{M}\right\}=\frac{1-2q^m(1-q)\norm{x}_{C(\mathbb{C}\mathrm{P}_q^1)}}{M}
\end{eqnarray}
as $M\ge 3$ and $0<q<1$. Now, observe that
\[
\frac{1-2q^m(1-q)\norm{x}_{C(\mathbb{C}\mathrm{P}_q^1)}}{q^m(1+q)}=\frac{1}{q^m(1+q)}-\frac{2(1-q)\norm{x}_{C(\mathbb{C}\mathrm{P}_q^1)}}{1+q}\to\infty
\]
as $\,m\to\infty$ because $0<q<1$. Therefore, there are infinitely many $m\in\mathbb{N}$ with $m\ge m_0$ for which $\frac{1-2q^m(1-q)\norm{x}_{C(\mathbb{C}\mathrm{P}_q^1)}}{q^m(1+q)}\gg M\norm{\bar\partial x}_{\mathrm{op}}$, since $M$ and $x$ are fixed. Choose one such $m^\prime\in\mathbb{N}$. This says that
\[
\norm{\overline\partial f_{m^\prime}}_{\mathrm{op}}+\norm{\overline\partial h_{m^\prime}}_{\mathrm{op}}=(q^{m^\prime}+q^{m^\prime+1})\norm{\bar\partial x}_{\mathrm{op}}<\frac{1-2q^{m^\prime}(1-q)\norm{x}_{C(\mathbb{C}\mathrm{P}_q^1)}}{M}\,.
\]
By \eqref{hh}, we see that the hypothesis of Proposition \ref{many gauge} is satisfied, and hence we have a pair $(f_{m^\prime},h_{m^\prime})$ such that $f_{m^\prime}-h_{m^\prime}$ is non-scalar (as $x$ is such), and $\overline{\nabla}^{(n)}_{\overline\partial f_{m^\prime}}\equiv_g\overline{\nabla}^{(n)}_{\overline\partial h_{m^\prime}}$. Also, the same conclusion holds for any $m>m^\prime$.
\smallskip

\noindent\textbf{Case~2:} If $M<3$. In this case, there exists $N\in\mathbb{N}$ such that for all $k\ge\max\{N,m_0\}$, we have
\begin{eqnarray}\label{hhhh}
0<\frac{2q^k(1-q)\norm{x}_{C(\mathbb{C}\mathrm{P}_q^1)}}{M}<\frac{1}{M}-\frac{1}{3}\,,
\end{eqnarray}
because $0<q<1$ and $M<3$. Using $\psiinf(x)=0$, we have from \eqref{hhhh}
\begin{eqnarray}\label{hhh}
& & \min\left\{\frac{1}{3}\,,\,\frac{1 - 2\norm{h_m - f_m - \psiinf(h_m-f_m)}_{C(\mathbb{C}\mathrm{P}_q^1)}}{M}\right\}\nonumber\\
&=& \min\left\{\frac{1}{3}\,,\,\frac{1-2q^m(1-q)\norm{x}_{C(\mathbb{C}\mathrm{P}_q^1)}}{M}\right\}\,=\,\frac{1}{3}
\end{eqnarray}
for all $m\ge\max\{N,m_0\}$. For all $m\ge\max\{N,m_0\}$, we already know that
\[
\norm{\overline\partial f_m}_{\mathrm{op}}+\norm{\overline\partial h_m}_{\mathrm{op}}=q^m(1+q)\norm{\bar\partial x}_{\mathrm{op}}\,.
\]
Since $q^m\to 0$ as $m\to\infty$, for large enough $m^\prime\in\mathbb{N}$ satisfying $m^\prime\ge\max\{N,m_0\}$, we obtain $\norm{\overline\partial f_{m^\prime}}_{\mathrm{op}}+\norm{\overline\partial h_{m^\prime}}_{\mathrm{op}}<\frac{1}{3}$. By \eqref{hhh}, we see that the hypothesis of Proposition \ref{many gauge} is satisfied. Then, $(f_{m^\prime},h_{m^\prime})$ is a pair of elements in $\mathcal{A}(\mathbb{C}\mathrm{P}_q^1)$ for which $f_{m^\prime}-h_{m^\prime}$ is non-scalar (as $x$ is such), and $\overline{\nabla}^{(n)}_{\overline\partial f_{m^\prime}}\equiv_g\overline{\nabla}^{(n)}_{\overline\partial h_{m^\prime}}$. Also, the same conclusion holds for any $m>m^\prime$.
\smallskip

In either case, we obtain pair $(f,h)$ of elements in $\mathcal{A}(\mathbb{C}\mathrm{P}_q^1)$, independent of the actual value of $M$, such that $f-h$ is non-scalar and $\overline{\nabla}^{(n)}_{\overline\partial f}\equiv_g\overline{\nabla}^{(n)}_{\overline\partial h}$.\qed
\end{example}

The above example indicates that the exact value of $M$ is not needed if one wishes to produce gauge equivalent $\bar\partial$-connections. Applying similar ideas as above, one can construct many other examples of gauge equivalent $\bar\partial$-connections. Another point is that the pair $(f,h)$ in the above example is constructed starting with an arbitrary element $x\in\mathcal{A}(\mathbb{C}\mathrm{P}_q^1)$, and not just from $\mathrm{Pol}(B_0)\subseteq\mathcal{A}(\mathbb{C}\mathrm{P}_q^1)$ (compare with Lemma \ref{complicacy}), and this is beyond the realm of \cite{GravemanLaRueMacArthurPesinWei2025NonStandardHolomorphic}.
\medskip


\section{A necessary and sufficient criterion for gauge equivalence}

In this section, we obtain a necessary and sufficient criterion in terms of fixed points of certain maps for the $\bar\partial$-connections $\overline{\nabla}_{\overline\partial f}\mbox{ and }\overline{\nabla}_{\overline\partial h}$ to be gauge equivalent, given $f,h\in\mathcal{A}(\mathbb{C}\mathrm{P}_q^1)$. Thus, the existence of invertible solutions of the mixed gauge equation is intimately related to the existence of fixed points of certain maps acting on an appropriate Banach space.

\subsection{A perturbative criterion}

Recall that $H=\mathscr{D}\cap\mathrm{ker}(\psi_\infty)$ is a Banach subspace of $\mathscr{D}$ equipped with the graph norm. For $f,h\in\mathcal{A}(\mathbb{C}\mathrm{P}_q^1)$ we have a map
\[ \Phi_{f,h}:H\longrightarrow H,\qquad
  \Phi_{f,h}(v) \;:=\;
  \int\bigl(v\,\overline\partial f \;-\; \overline\partial h\cdot v \;+\; \overline\partial(h-f)\bigr).
\]
The map is linear if and only if $h-f$ is a scalar. Note that $\mathrm{Ran}(\Phi_{f,h})\subseteq H$ follows from Proposition \ref{main 1} and Lemma \ref{main 2}.

Now, consider the linear operator
\[
  L_{f,h}:\mathscr{D}\longrightarrow \mathcal{B}(H_+,H_-),
  \qquad
  L_{f,h}(v) \;:=\; \overline\partial v \;-\; v\,\overline\partial f \;+\; \overline\partial h\cdot v,
\]
and denote $\beta_{f,h} \;:=\; \overline\partial(h-f) \;\in\;\bar\partial(\mathcal{A}(\mathbb{C}\mathrm{P}_q^1))\subseteq\mathcal{B}(H_+\,,\,H_-)$.

\begin{lemma}
The linear operator $L_{f,h}:\mathscr{D}\longrightarrow\mathcal{B}(H_+\,,\,H_-)$ is a bounded linear operator with respect to the graph norm on $\mathscr{D}$, and we have
\[
\norm{L_{f,h}}_{\mathrm{op}}\leq 1+\norm{\overline\partial f}_{\mathrm{op}}+\norm{\overline\partial h}_{\mathrm{op}}.
\]
\end{lemma}
\begin{proof}
This follows from the triangle inequality and the fact that $\norm{\overline\partial v}_{\mathrm{op}}\leq\norm{v}_{\mathrm{Gr}}$.
\end{proof}

Observe the following equivalent statements\,:
\begin{eqnarray*}
& & \Phi_{f,h}\mbox{ has a fixed point } v_0\in H\\
&\iff& \mbox{ the nonhomogeneous equation }L_{f,h}(v)=\beta_{f,h}\mbox{ has a solution } v_0\in H.\\
&\iff& \mbox{ the homogeneous equation }L_{f,h}(v)=0\mbox{ has a solution }\widetilde{v}_0:=v_0-1\in\mathscr{D}\\
& & \mbox{ with }\psiinf(\widetilde v_0)=-1.\\
&\iff& \mbox{ the mixed gauge equation has a solution }\widetilde{v}_0\in\mathscr{D}\mbox{ with }\psiinf(\widetilde v_0)=-1.
\end{eqnarray*}
Thus, for $f,h\in\mathcal{A}(\mathbb{C}\mathrm{P}_q^1)$, the map $\Phi_{f,h}:H\to H$ has a fixed point if and only if
\[
\beta_{f,h} \;\in\; \mathrm{Ran}\,(L_{f,h}|_H) \;\subseteq\,\mathcal{B}(H_+,H_-).
\]

\begin{lemma}
Given $f,h\in\mathcal{A}(\mathbb{C}\mathrm{P}_q^1)$, one has the following `iff' criteria\,:
\[
\begin{matrix}
g_0\in H\mbox{ solves the mixed gauge equation} &\iff& v_0\in\mathscr{D}\mbox{ solves }L_{f,h}(v)=\beta_{f,h}\\
 & & \mbox{ and }\psi_\infty(v_0)=1\\
  & & \\
g_0\in\mathscr{D}\setminus H\mbox{ solves the mixed gauge equation} &\iff& v_0\in H\mbox{ solves }L_{f,h}(v)=\beta_{f,h}\\
 & & \Big\Updownarrow\\
 & & \Phi_{f,h}\mbox{ has a fixed point } v_0\in H.
\end{matrix}
\]
\end{lemma}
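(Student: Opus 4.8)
The plan is to deduce every arrow of the diagram from the single affine substitution $v = 1+g$ together with the mapping properties of $\int$ and $\overline\partial$ recorded in Proposition~\ref{main 1}, Lemma~\ref{main 2}, and the corollary $\int\overline\partial u = u-\psiinf(u)$ for $u\in\mathscr{D}$. First I would record the algebraic identity already used in Section~4.4: since $\overline\partial 1 = 0$ we have $\overline\partial v = \overline\partial g$, and inserting $g = v-1$ into the mixed gauge equation $\overline\partial g = g\overline\partial f - \overline\partial h\cdot g$ yields, after the obvious cancellations, $\overline\partial v = v\overline\partial f - \overline\partial h\cdot v + \overline\partial(h-f)$, i.e.\ $L_{f,h}(v)=\beta_{f,h}$. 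Hence $g\mapsto 1+g$ is a bijection from the solution set of the mixed gauge equation in $\mathscr{D}$ onto the solution set of $L_{f,h}(v)=\beta_{f,h}$ in $\mathscr{D}$. This bijection immediately gives the two horizontal equivalences once the side conditions on $\psiinf$ are matched up.

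The $\psiinf$-bookkeeping is the next step. Under the isomorphism $C(S_q^2)\cong\mathcal{K}(\ell^2(\mathbb{N}))\oplus\mathbb{C}$ the identity corresponds to $0\oplus 1$, so $\psiinf(1)=1$ and therefore $\psiinf(v)=1+\psiinf(g)$ for $v=1+g$. Consequently $g\in H=\mathscr{D}\cap\ker\psiinf$ is equivalent to $\psiinf(v)=1$ --- this is the top horizontal line --- while $v\in H$ is equivalent to $\psiinf(g)=-1$, which is the $g$-side constraint in the bottom horizontal line (in agreement with the remark after Lemma~\ref{lem:mixed-fixed-point} that any such $g_0$ automatically satisfies $\psiinf(g_0)=-1$).

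It then remains to prove the vertical equivalence: $v_0\in H$ solves $L_{f,h}(v)=\beta_{f,h}$ exactly when $v_0$ is a fixed point of $\Phi_{f,h}$. For the forward direction I would apply $\overline\partial$ to the fixed-point identity $v_0 = \int\bigl(v_0\overline\partial f - \overline\partial h\cdot v_0 + \overline\partial(h-f)\bigr)$ and use Lemma~\ref{main 2} together with the Leibniz rule and Proposition~\ref{main 1} --- which license applying $\overline\partial\circ\int$ termwise --- to recover $L_{f,h}(v_0)=\beta_{f,h}$; membership $v_0\in H$ is automatic because $\Phi_{f,h}$ maps $H$ into $H$. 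For the converse I would apply $\int$ to $L_{f,h}(v_0)=\beta_{f,h}$: Proposition~\ref{main 1} shows each summand of the right-hand side lies in $\mathrm{Dom}(\int)$ with $\int$-images summing to $\Phi_{f,h}(v_0)$, whereas $\int\overline\partial v_0 = v_0-\psiinf(v_0)=v_0$ since $v_0\in H$, giving $v_0=\Phi_{f,h}(v_0)$. Splicing the substitution, the $\psiinf$ count, and this $\int/\overline\partial$ round trip produces the whole diagram. Everything here is mechanical; the one place to be careful --- rather than the site of a real difficulty --- is ensuring that each use of $\int$ and of $\overline\partial\circ\int$ is sanctioned by Proposition~\ref{main 1} and Lemma~\ref{main 2} (recall that $\overline\partial(fg)$ for generic $g\in\mathscr{D}$ need not a priori lie in $\mathrm{Dom}(\int)$, which is exactly the gap those results close), and in keeping the shift by $\psiinf(1)=1$ straight when passing between $H$ and the affine hyperplanes $\{\psiinf=1\}$ and $\{\psiinf=-1\}$.
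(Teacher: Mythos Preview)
Your argument for the top horizontal equivalence and for the vertical one is correct and matches the paper's. The gap is in the forward direction of the bottom horizontal line. There, starting from an arbitrary solution $g_0\in\mathscr{D}$ of the mixed gauge equation, your substitution $v_0=1+g_0$ lands in $H$ only when $\psiinf(g_0)=-1$. You attempt to close this by invoking the remark after Lemma~\ref{lem:mixed-fixed-point}, but that remark does \emph{not} assert that every solution of the mixed gauge equation has $\psiinf=-1$: it concerns the particular $g_0=v_0-1$ arising from a fixed point $v_0\in H$ of $\Psi$, for which $\psiinf(g_0)=\psiinf(v_0)-1=-1$ is a tautology. An arbitrary $g_0\in\mathscr{D}$ solving the mixed gauge equation need not have $\psiinf(g_0)=-1$ (any nonzero scalar multiple of a solution is again a solution), so the bare affine substitution does not produce the required element of $H$.

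The paper handles this direction by a rescaling that exploits the linearity of the mixed gauge equation together with the identity $L_{f,h}(1)=\beta_{f,h}$. Given $g_0$ with $\psiinf(g_0)\neq 0$, replace $g_0$ by $g_0/\psiinf(g_0)$; then $v_0=1+g_0/\psiinf(g_0)$ solves $L_{f,h}(v)=\beta_{f,h}$ with $\psiinf(v_0)=2$, and the affine renormalization $v_1=\dfrac{v_0-\psiinf(v_0)}{1-\psiinf(v_0)}$ yields a solution with $\psiinf(v_1)=0$, i.e.\ $v_1\in H$. Note that even the paper's argument tacitly assumes $\psiinf(g_0)\neq 0$; this is harmless in the intended application, where $g_0$ is invertible and hence cannot be a compact operator.
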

\begin{proof}
The vertical `iff' statement is already observed previously. To prove the first horizontal `iff' statement, if $g_0\in H$ solves the mixed gauge equation, then observe that $v_0=g_0+1$ is the required solution of $L_{f,h}(v)=\beta_{f,h}$, and vice versa. 

To prove the second horizontal `iff' statement, if $g_0\in\mathscr{D}$ solves the mixed gauge equation, then $v_0=1+\frac{g_0}{\psi_\infty(g_0)}$ is a solution of $L_{f,h}(v)=\beta_{f,h}$ with $\psiinf(v_0)=2$, i.e., $v_0\notin H$. Now, observe that if $v_0\in\mathscr{D}$ solves $L_{f,h}(v)=\beta_{f,h}$ and $\psi_\infty(v_0)\neq 1$, then $v_1=\frac{v_0-\psi_\infty(v_0)}{1-\psi_\infty(v_0)}\in H$ and $L_{f,h}(v_1)=\beta_{f,h}$. For the converse direction, given such $v_0$, choose $g_0=v_0-1\in\mathscr{D}\setminus H$.
\end{proof}

Therefore, if $\overline{\nabla}_{\overline\partial f}\equiv_g\overline{\nabla}_{\overline\partial h}$, then $\Phi_{f,h}$ has a fixed point; or in other words, if $\Phi_{f,h}$ has no fixed point, then $\overline{\nabla}_{\overline\partial f}\not\equiv_g\overline{\nabla}_{\overline\partial h}$. However, note that this is not an `if and only if' criterion because to guarantee the gauge equivalence, we also need to assert the \emph{invertibility} condition of the solution to the mixed gauge equation. We shall obtain a necessary and sufficient criterion in the sequel.

The following result shows that fixed points of $\Phi_{f,h}$ cannot be arbitrarily small. Denote $\,\omega_{f,h}:=\int\beta_{f,h}=h-f-\psiinf(h-f)\in\mathcal{A}(\mathbb{C}\mathrm{P}_q^1)$.

\begin{lemma}\label{prop:norm-lower-bound}
For every $v\in H$ we have the estimate
\[
  \bigl\|\Phi_{f,h}(v)-v\bigr\|_{\mathrm{Gr}}
  \;\ge\;
  \|w_{f,h}\|_{\mathrm{Gr}}
  \;-\;
  (M+1)\,\|L_{f,h}\|_{\mathrm{op}}\,\|v\|_{\mathrm{Gr}}.
\]
In particular, if $v\in H$ is a fixed point of $\Phi_{f,h}$, then
\[
  \|v\|_{\mathrm{Gr}}
  \;\ge\;
  \frac{\norm{w_{f,h}}_{\mathrm{Gr}}}{(M+1)\norm{L_{f,h}}_{\mathrm{op}}}.
\]
\end{lemma}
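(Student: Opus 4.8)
The plan is to reduce the whole statement to a single algebraic identity expressing $\Phi_{f,h}(v)-v$ in terms of the bounded linear operator $L_{f,h}$ and the fixed element $\omega_{f,h}:=\int\beta_{f,h}=h-f-\psiinf(h-f)$ (the element written $w_{f,h}$ in the statement), and then to feed this into the norm bound $\norm{\int x}_{\mathrm{Gr}}\le(M+1)\norm{x}_{\mathrm{op}}$ already available for $x\in X$ with $\int x\in\mathscr{D}$ and $\overline\partial\circ\int x=x$.

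First I would prove that for every $v\in H$,
\[
  \Phi_{f,h}(v)-v\;=\;\omega_{f,h}-\int L_{f,h}(v).
\]
To obtain this, split $\Phi_{f,h}(v)=\int\bigl(v\,\overline\partial f-\overline\partial h\cdot v\bigr)+\int\beta_{f,h}=\int\bigl(v\,\overline\partial f-\overline\partial h\cdot v\bigr)+\omega_{f,h}$. Since $v\in H=\mathscr{D}\cap\ker\psiinf$, the corollary stating $\int\overline\partial g=g-\psiinf(g)$ for $g\in\mathscr{D}$ gives $\int\overline\partial v=v$. Applying the linear operator $\int$ to $L_{f,h}(v)=\overline\partial v-v\,\overline\partial f+\overline\partial h\cdot v$ then yields $\int L_{f,h}(v)=v-\int\bigl(v\,\overline\partial f-\overline\partial h\cdot v\bigr)$, and substituting this into the displayed expression for $\Phi_{f,h}(v)$ gives the identity.

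Next I would check that $x:=L_{f,h}(v)$ satisfies the hypotheses of the $(M+1)$-bound lemma. Its three summands $\overline\partial v$, $v\,\overline\partial f$, $\overline\partial h\cdot v$ all lie in the closed subspace $X$: the first because the corollary above applies to $v$, hence $\overline\partial v\in\mathrm{Dom}(\int)=X$; the other two by Proposition~\ref{main 1}; as $X$ is a linear subspace, $x\in X$. Term by term (the corollary for $\overline\partial v$, Lemma~\ref{main 2} for the other two), one gets $\int x\in\mathscr{D}$ with $\overline\partial\circ\int x=x$. Therefore the $(M+1)$-bound applies and $\norm{\int L_{f,h}(v)}_{\mathrm{Gr}}\le(M+1)\norm{L_{f,h}(v)}_{\mathrm{op}}\le(M+1)\norm{L_{f,h}}_{\mathrm{op}}\norm{v}_{\mathrm{Gr}}$.

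The conclusion is then immediate: the reverse triangle inequality applied to the identity gives $\norm{\Phi_{f,h}(v)-v}_{\mathrm{Gr}}\ge\norm{\omega_{f,h}}_{\mathrm{Gr}}-(M+1)\norm{L_{f,h}}_{\mathrm{op}}\norm{v}_{\mathrm{Gr}}$, which is the first inequality, and if $v$ is a fixed point the left-hand side vanishes, so rearranging yields $\norm{v}_{\mathrm{Gr}}\ge\norm{\omega_{f,h}}_{\mathrm{Gr}}/\bigl((M+1)\norm{L_{f,h}}_{\mathrm{op}}\bigr)$; the division is legitimate whenever $\omega_{f,h}\neq0$, since $L_{f,h}=0$ would force $L_{f,h}(1)=\overline\partial(h-f)=\beta_{f,h}=0$ and hence $\omega_{f,h}=0$ (and if $\omega_{f,h}=0$ the bound is vacuous). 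I do not expect a real obstacle here; the only point requiring care is the bookkeeping in the algebraic identity, i.e.\ applying $\overline\partial\circ\int=\mathrm{id}$ only to objects for which Proposition~\ref{main 1} and Lemma~\ref{main 2} actually justify it, together with treating the degenerate case $\omega_{f,h}=0$ separately so the final division is valid.
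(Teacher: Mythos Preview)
Your proposal is correct and follows essentially the same approach as the paper: both derive the identity $\Phi_{f,h}(v)-v=\omega_{f,h}-\int L_{f,h}(v)$, then apply the reverse triangle inequality together with the bound $\norm{\int L_{f,h}(v)}_{\mathrm{Gr}}\le(M+1)\norm{L_{f,h}}_{\mathrm{op}}\norm{v}_{\mathrm{Gr}}$. Your write-up is in fact a bit more careful than the paper's, since you explicitly verify that $L_{f,h}(v)$ meets the hypotheses of the $(M+1)$-bound and you address the degenerate case $\omega_{f,h}=0$ in the final division, neither of which the paper spells out.
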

\begin{proof}
Define the translation
\[
  \Phi_T:H\to H, \qquad \Phi_T(v):=v+w_{f,h}\,.
\]
We have for all $v\in H$,
\[
  \Phi_{f,h}(v)-\Phi_T(v)
  \;=\;
  -\,\int L_{f,h}(v).
\]
Equivalently,
\[
  \Phi_{f,h}(v)-v
  \;=\;
  -\,\int L_{f,h}(v) + w_{f,h}.
\]
Now,
\[
  \norm{\Phi_{f,h}(v)-v}_{\mathrm{Gr}}
  \;=\;
  \norm{-\,\int L_{f,h}(v) + w_{f,h}}_{\mathrm{Gr}}
  \;\ge\;
  \norm{w_{f,h}}_{\mathrm{Gr}} - \norm{\int(L_{f,h}(v))}_{\mathrm{Gr}}.
\]
By boundedness of $L_{f,h}$ and applying Lemma \ref{main 2}, we obtain
\[
  \norm{\int L_{f,h}(v)}_{\mathrm{Gr}}
  \;\le\;
  M\norm{L_{f,h}(v)}_{\mathrm{op}}+\norm{L_{f,h}(v)}_{\mathrm{op}}
  \;\le\;
  (M+1)\,\|L_{f,h}\|_{\mathrm{op}}\,\|v\|_{\mathrm{Gr}}.
\]
Substituting this into the previous inequality yields the result.
\end{proof}

\begin{corollary}
Given $f,h\in\mathcal{A}(\mathbb{C}\mathrm{P}_q^1)$ such that $h-f$ is non-scalar, set
\[
  \rho_{f,h}
  :=
  \frac{\|w_{f,h}\|_{\mathrm{Gr}}}{(M+1)\,\|L_{f,h}\|_{\mathrm{op}}}.
\]
Then, for every $0<\rho<\rho_{f,h}$ the map $\Phi_{f,h}$ has no fixed point in the closed ball
\[
  B_\rho
  :=
  \bigl\{\,v\in H \;:\; \|v\|_{\mathrm{Gr}}\le\rho\,\bigr\}.
\]
\end{corollary}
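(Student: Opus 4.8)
The plan is to deduce this corollary immediately from Lemma \ref{prop:norm-lower-bound}, which already contains the essential estimate; the only genuine task is to check that the radius $\rho_{f,h}$ is positive, so that the asserted interval $(0,\rho_{f,h})$ is nonempty. First I would verify that $\|w_{f,h}\|_{\mathrm{Gr}}>0$. Recall $w_{f,h}=\int\beta_{f,h}=h-f-\psiinf(h-f)\in\mathbb{C}\mathrm{P}_q^1\subseteq\mathscr{D}$, so this quantity is finite; if it vanished, then $h-f$ would equal the scalar $\psiinf(h-f)\cdot 1$, contradicting the hypothesis that $h-f$ is non-scalar. Hence $\|w_{f,h}\|_{\mathrm{Gr}}>0$. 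Next I would check $0<\|L_{f,h}\|_{\mathrm{op}}<\infty$: finiteness is the content of the earlier norm estimate for $L_{f,h}$, while on the other hand, since $1\in\mathscr{D}$ with $\|1\|_{\mathrm{Gr}}=1$, we get $\|L_{f,h}\|_{\mathrm{op}}\ge\|L_{f,h}(1)\|_{\mathrm{op}}=\|\overline\partial(h-f)\|_{\mathrm{op}}>0$, because $\ker\overline\partial=\mathbb{C}$ and $h-f\notin\mathbb{C}$. Thus $\rho_{f,h}\in(0,\infty)$.

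With positivity of $\rho_{f,h}$ in hand, the rest is a one-line contradiction argument. Fix $\rho$ with $0<\rho<\rho_{f,h}$ and suppose, toward a contradiction, that some $v\in B_\rho$ is a fixed point of $\Phi_{f,h}$. Since $v$ is a fixed point of $\Phi_{f,h}$ lying in $H$, the second conclusion of Lemma \ref{prop:norm-lower-bound} forces $\|v\|_{\mathrm{Gr}}\ge\rho_{f,h}$. But $v\in B_\rho$ means $\|v\|_{\mathrm{Gr}}\le\rho<\rho_{f,h}$, which is impossible. Therefore $\Phi_{f,h}$ has no fixed point in $B_\rho$, which is exactly the assertion of the corollary. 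Equivalently stated in contrapositive form: every fixed point of $\Phi_{f,h}$ has graph norm at least $\rho_{f,h}$, hence escapes each ball $B_\rho$ with $\rho<\rho_{f,h}$.

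The only step requiring any care at all is the verification $\rho_{f,h}>0$, and even that reduces painlessly to the non-scalar hypothesis together with $\ker\overline\partial=\mathbb{C}$; there is no real obstacle here, as the substantive work was already carried out in Lemma \ref{prop:norm-lower-bound}. If one wished to make the statement slightly sharper, the same argument applies verbatim to the open ball $\{v\in H:\|v\|_{\mathrm{Gr}}<\rho_{f,h}\}$, since the strict inequality $\|v\|_{\mathrm{Gr}}<\rho_{f,h}$ already contradicts the lower bound $\|v\|_{\mathrm{Gr}}\ge\rho_{f,h}$.
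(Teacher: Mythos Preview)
Your proof is correct and follows exactly the approach the paper intends: the corollary is stated without proof immediately after Lemma~\ref{prop:norm-lower-bound}, signaling that it is a direct consequence of the fixed-point lower bound established there. Your added verification that $\rho_{f,h}>0$ (via $w_{f,h}\neq 0$ from the non-scalar hypothesis and $\|L_{f,h}\|_{\mathrm{op}}\ge\|L_{f,h}(1)\|_{\mathrm{op}}=\|\overline\partial(h-f)\|_{\mathrm{op}}>0$) is a useful detail the paper leaves implicit.
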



\subsection{A {\em necessary and sufficient} criterion for gauge equivalence}

Let $f,h\in\mathcal{A}(\mathbb{C}\mathrm{P}_q^1)$ be such that $h-f$ is non-scalar. Consider the following one-parameter family of maps:
\begin{align*}
\Phi_{f,h,\lambda}:H &\to H\\
v &\mapsto \int(v\overline\partial f-\overline\partial h\,.\,v+\lambda\,\overline\partial(h-f)),
\end{align*}
for $\lambda\in\mathbb{C}^\times:=\mathbb{C}\setminus\{0\}$. These maps are well-defined due to Proposition \ref{main 1} and Lemma \ref{main 2}. Note that our earlier focus in this section was confined to $\lambda=1$. Recall once again
\begin{align*}
L_{f,h}(v) &:= \overline\partial v-v\overline\partial f+\overline\partial h\,.\,v,\\
\beta_{f,h} &:= \overline\partial (h-f).
\end{align*}
It is easy to verify that $\Phi_{f,h,\lambda}$ has a fixed point in $H$ if and only if the nonhomogeneous equation $L_{f,h}(v)=\lambda\beta_{f,h}$ has a solution in $H$, using Lemma \ref{main 2}. Moreover, $L_{f,h}(v)=\lambda\beta_{f,h}$ if and only if $\lambda-v$ solves the homogeneous equation $L_{f,h}(x)=0$, which is precisely the mixed gauge equation.

Denote the open unit ball in $H$ by the following:
\[
B_1(0;H):=\{\xi\in H:\norm{\xi}_{\mathrm{Gr}}<1\}.
\]
Our final result is the following.

\begin{theorem}\label{main}
Given any $f,h\in\mathcal{A}(\mathbb{C}\mathrm{P}_q^1)$, the holomorphic structures $\overline{\nabla}^{(n)}_{\overline\partial f}$ and $\overline{\nabla}^{(n)}_{\overline\partial h}$ on $\mathcal{L}_n$ are gauge equivalent (i.e., $\overline{\nabla}^{(n)}_{\overline\partial f}\equiv_g\overline{\nabla}^{(n)}_{\overline\partial h}$) if and only if there exists $\lambda_0\in\mathbb{C}^\times$ such that the map $\Phi_{f,h,\lambda_0}$ has a fixed point $\xi\in B_1(0;H)$ with $\lambda_0-\xi$ invertible in $\mathscr{D}$. Moreover, such a $\lambda_0$ can be chosen to satisfy $|\lambda_0|<1$.
\end{theorem}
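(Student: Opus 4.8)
The plan is to prove both implications of the equivalence by passing through the dictionary, recorded just before the statement, relating fixed points of $\Phi_{f,h,\lambda}$ in $H$, solutions in $H$ of the inhomogeneous equation $L_{f,h}(v)=\lambda\beta_{f,h}$, and invertible solutions in $\mathscr{D}$ of the mixed gauge equation $L_{f,h}(g)=0$. The role of the extra parameter $\lambda$ is exactly to create enough freedom: an invertible solution $g_0$ of the mixed gauge equation cannot in general be normalized so that $\psiinf(g_0)$ takes a prescribed value while keeping the associated $\xi$ small, but after rescaling by a small $\lambda_0$ we can match the $\lambda_0$-version of the equation and simultaneously land inside the open unit ball.

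For \emph{sufficiency}, suppose $\Phi_{f,h,\lambda_0}$ has a fixed point $\xi\in B_1(0;H)$ with $\lambda_0-\xi$ invertible in $\mathscr{D}$. Applying $\overline\partial$ to the fixed-point identity and using Lemma~\ref{main 2} (as in the proof of Lemma~\ref{lem:mixed-fixed-point}) gives $L_{f,h}(\xi)=\lambda_0\beta_{f,h}$. Since $L_{f,h}$ is linear with $L_{f,h}(1)=\overline\partial(h-f)=\beta_{f,h}$, the element $g_0:=\lambda_0-\xi\in\mathscr{D}$ satisfies $L_{f,h}(g_0)=\lambda_0\beta_{f,h}-\lambda_0\beta_{f,h}=0$, i.e. $\overline\partial g_0=g_0\,\overline\partial f-\overline\partial h\cdot g_0$. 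As $g_0$ is invertible in $\mathscr{D}$ by hypothesis, the gauge-equivalence lemma of Section~3 yields $\overline{\nabla}^n_{\overline\partial f}\equiv_g\overline{\nabla}^n_{\overline\partial h}$. (Neither $\xi\in B_1(0;H)$ nor $|\lambda_0|<1$ is needed for this direction.)

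For \emph{necessity}, start from an invertible $g_0\in\mathscr{D}$ with $\overline\partial g_0=g_0\,\overline\partial f-\overline\partial h\cdot g_0$, i.e. $L_{f,h}(g_0)=0$, and first record two structural facts. First, $\mu:=\psiinf(g_0)\neq0$: since $g_0$ is invertible in $\mathscr{D}\subseteq C(\mathbb{C}\mathrm{P}_q^1)$ and $\psiinf$ is a unital character, $\psiinf(g_0)\psiinf(g_0^{-1})=1$ (equivalently, $\ker\psiinf\cong\mathcal{K}(\ell^2(\mathbb{N}))$ contains no invertible element). Second, $g_0$ is non-scalar: if $g_0=\mu$ then $0=L_{f,h}(g_0)=\mu\beta_{f,h}$ would force $\overline\partial(h-f)=0$, contradicting that $f-h$ is non-scalar (the same argument used in the proof of Lemma~4.12). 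Put $\tilde g:=g_0/\mu\in\mathscr{D}$, so that $\psiinf(\tilde g)=1$, $\tilde g$ is invertible in $\mathscr{D}$, $L_{f,h}(\tilde g)=0$, and $1-\tilde g\in H$ with $1-\tilde g\neq 0$; set $C:=\|1-\tilde g\|_{\mathrm{Gr}}>0$. Now choose any $\lambda_0\in\mathbb{C}$ with $0<|\lambda_0|<\min\{1,C^{-1}\}$ and put $\xi:=\lambda_0(1-\tilde g)\in H$. Then $\|\xi\|_{\mathrm{Gr}}=|\lambda_0|\,C<1$, so $\xi\in B_1(0;H)$; by linearity $L_{f,h}(\xi)=\lambda_0\beta_{f,h}-\lambda_0\cdot 0=\lambda_0\beta_{f,h}$, so $\xi$ is a fixed point of $\Phi_{f,h,\lambda_0}$; and $\lambda_0-\xi=\lambda_0\tilde g=(\lambda_0/\mu)\,g_0$ is invertible in $\mathscr{D}$ because $\lambda_0\neq 0$ and $g_0$ is. Finally $|\lambda_0|<\min\{1,C^{-1}\}\le 1$, which gives the ``moreover'' clause.

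The only points requiring genuine care are the two structural observations in the necessity direction—that an invertible $g_0\in\mathscr{D}$ has $\psiinf(g_0)\neq 0$ and that $g_0$ cannot be a scalar when $f-h$ is non-scalar. The second is essential: if $1-\tilde g$ vanished, the scaling trick (which is precisely what produces a fixed point lying in $B_1(0;H)$, with $\lambda_0-\xi$ invertible, and with $|\lambda_0|<1$, all at once) would collapse. Both facts are, however, already available from earlier in the paper—the identification $\ker\psiinf\cong\mathcal{K}(\ell^2(\mathbb{N}))$ and the non-scalar reasoning from the proof of Lemma~4.12—so once the dictionary between fixed points and the mixed gauge equation is invoked, the theorem follows by a short assembly.
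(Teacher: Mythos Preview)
Your proof is correct and, in fact, cleaner than the paper's in both directions.

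For sufficiency, you go straight to $g_0:=\lambda_0-\xi$ and verify $L_{f,h}(g_0)=\lambda_0\,L_{f,h}(1)-L_{f,h}(\xi)=\lambda_0\beta_{f,h}-\lambda_0\beta_{f,h}=0$, using $L_{f,h}(1)=\beta_{f,h}$. The paper instead splits into the cases $\lambda_0=1$ and $\lambda_0\neq1$; in the second case it introduces auxiliary scalars $\alpha,\gamma$ with $\gamma/\alpha=1-\lambda_0$, passes through $x':=\alpha(1-\xi)$, and finally defines $x$ by $x'=\alpha x+\gamma$. If one unwinds that construction, one finds $x=(1-\xi)-(1-\lambda_0)=\lambda_0-\xi$, i.e.\ exactly your $g_0$. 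So the paper's detour is not doing extra work; your one-line observation is the content.

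For necessity, the paper manufactures $v'=\alpha v+\gamma$ and argues spectrally (choosing $\gamma$ with $-\gamma/\alpha\notin\sigma(v)$) to force both invertibility of $v'$ and the norm inequality $\|v'-\psi_\infty(v')\|<|\psi_\infty(v')|$; a second spectral argument, pushing $|\gamma|$ large, is then needed to arrange $|\lambda_0|<1$. Your route normalizes once, $\tilde g:=g_0/\psi_\infty(g_0)$, and then a single scaling $\xi:=\lambda_0(1-\tilde g)$ with $0<|\lambda_0|<\min\{1,\|1-\tilde g\|_{\mathrm{Gr}}^{-1}\}$ delivers all three requirements simultaneously: $\|\xi\|_{\mathrm{Gr}}<1$, $|\lambda_0|<1$, and invertibility of $\lambda_0-\xi=\lambda_0\tilde g$ without touching the spectrum at all. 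The two structural facts you isolate ($\psi_\infty(g_0)\neq0$ from $\ker\psi_\infty\cong\mathcal K(\ell^2(\mathbb N))$, and $g_0$ non-scalar from $f-h$ non-scalar) are exactly what is needed to make this scaling work, and both are already established earlier in the paper. The paper's approach does make the range of admissible $\lambda_0$ more visibly tied to spectral data of the solution, but that information is not used later, so your more elementary argument loses nothing.
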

\begin{proof}
The case of $f-h$ being a scalar is trivial. Therefore, assume that $f-h$ is non-scalar in $\mathcal{A}(\mathbb{C}\mathrm{P}_q^1)$.

First, assume that $\overline{\nabla}^{(n)}_{\overline\partial f}\equiv_g\overline{\nabla}^{(n)}_{\overline\partial h}$. Then we have an invertible solution $v\in\mathscr{D}$ for the homogeneous equation $L_{f,h}(v)=0$. Moreover, $\psiinf(v)\neq 0$ as $v$ cannot lie in $H$. Observe that $v$ is not a scalar as $f-h$ is non-scalar, and hence $v\neq\psiinf(v)$. In the following, we simply write $\norm{\cdot}$ in place of $\norm{\cdot}_{\mathrm{Gr}}$, as no confusion arises.
\smallskip

\noindent\textit{Claim:} There exist nonzero scalars $\alpha,\gamma$ such that $v^\prime=\alpha v+\gamma$ is also invertible and $\norm{v^\prime-\psiinf(v^\prime)}<|\psiinf(v^\prime)|$.
\smallskip

Observe that $v^\prime-\psiinf(v^\prime)=\alpha(v-\psiinf(v))$. So the requirement $\norm{v^\prime-\psiinf(v^\prime)}<|\psiinf(v^\prime)|$ is satisfied if and only if $|\alpha|\norm{v-\psiinf(v)}<|\alpha\psiinf(v)+\gamma|$. Also, $v^\prime=\alpha(v+\gamma/\alpha)$ is invertible if and only if $-\gamma/\alpha\notin\sigma(v)$. Choose any nonzero scalar $\alpha\in\mathbb{C}$ and denote $C:=|\alpha|\norm{v-\psiinf(v)}$. Since $v\neq\psiinf(v)$, we have $C>0$. Since the map $z\mapsto \alpha\psiinf(v)+z$ is bijective from $\mathbb{C}$ to $\mathbb{C}$, choose a nonzero scalar $\gamma$ such that $|\alpha\psiinf(v)+\gamma|>C=|\alpha|\norm{v-\psiinf(v)}$. Observe that for fixed $z_0=\alpha\psiinf(v)$, the set $\{z\in\mathbb{C}:|z+z_0|>C\}$ is unbounded since it contains the complement of the closed  ball $\{z\in\mathbb{C}:|z|\le C+|z_0|\}$. Since $\sigma(v)\subseteq\mathbb{C}$ is a bounded set, there exists $\gamma\in\mathbb{C}\setminus\{0\}$ such that the conditions $-\gamma/\alpha\notin\sigma(v)$ and $|\alpha\psiinf(v)+\gamma|>C$ simultaneously hold. This proves that $v^\prime$ is invertible in $\mathscr{D}$ and $\norm{v^\prime-\psiinf(v^\prime)}=C<|\alpha\psiinf(v)+\gamma|=|\psiinf(v^\prime)|$, which completes the proof of the claim.

Notice that $v^\prime$ is invertible and hence $\psiinf(v^\prime)\neq 0$. Consider $\xi=1-\frac{v^\prime}{\psiinf(v^\prime)}$. Then, clearly $\xi\in H$ as $\psiinf$ is linear, and by the above claim we have $\norm{\xi}<1$. Thus, $\xi$ lies in the open unit ball in $H$. Now,
\begin{eqnarray*}
L_{f,h}(\xi) &=& \overline\partial \xi-\xi\overline\partial f+\overline\partial h\,.\,\xi\\
&=& -\frac{1}{\psiinf(v^\prime)}(\overline\partial v^\prime-v^\prime\overline\partial f+\overline\partial h\,.\,v^\prime)+\beta_{f,h}.
\end{eqnarray*}
Since $v$ satisfies $L_{f,h}(v)=0$, we have $L_{f,h}(v^\prime)=\gamma\beta_{f,h}$. Then
\[
L_{f,h}(\xi)=\left(1-\frac{\gamma}{\psiinf(v^\prime)}\right)\beta_{f,h}.
\]
Observe that $\gamma\neq\psiinf(v^\prime)$, since $\alpha\neq 0$ and $\psiinf(v)\neq 0$. Therefore, $\xi$ is a fixed point of $\Phi_{f,h,\lambda_0}$ in the open unit ball of $H$, where $\lambda_0=1-\frac{\gamma}{\psiinf(v^\prime)}\in\mathbb{C}\setminus\{0\}$. Observe that $\gamma\notin\sigma(v^\prime)=\alpha\sigma(v)+\gamma$. This is because $v$ was invertible and therefore $0\notin\sigma(v)$. Therefore, $\lambda_0\notin\sigma(\xi)$, i.e., $\lambda_0-\xi$ is invertible.

Moreover, we can also choose $\gamma$ in such a way that $|\lambda_0|<1$. For this, write $\alpha\psiinf(v)=z_0$ fixed. We have $-\gamma/\alpha\notin\sigma(v)$ and $|z_0+\gamma|>C=|\alpha|\norm{v-\psiinf(v)}$. Observe that $|\lambda_0|<1$ if and only if $|z_0+\gamma|>|z_0|$. Let $M=\max\{|z_0|,C\}$ and choose $\gamma^{\,\prime}$ that satisfies $|\gamma^{\,\prime}|>|\alpha|R+M+|z_0|$, where $R>0$ is such that $\sigma(v)\subseteq B_R(0)$. This is always possible, as $\sigma(v)$ is a bounded subset of $\mathbb{C}$. Then, $-\gamma^{\,\prime}/\alpha\notin\sigma(v)$ as $|\gamma^{\,\prime}|>|\alpha|R$, and
\begin{eqnarray}\label{final inequality}
|z_0+\gamma^{\,\prime}|\ge|\gamma^{\,\prime}|-|z_0|>|\alpha|R+M\ge\max\{|z_0|,C\}.
\end{eqnarray}
Therefore, if we choose $v^{\prime\prime}=\alpha v+\gamma^{\,\prime}$, then $v^{\prime\prime}$ is invertible as $-\gamma^{\,\prime}/\alpha\notin\sigma(v)$ and $\norm{v^{\prime\prime}-\psiinf(v^{\prime\prime})}<|\psiinf(v^{\prime\prime})|$ using \eqref{final inequality}. Thus, all the relevant properties satisfied by $v^\prime$ are also satisfied by $v^{\prime\prime}$, and we additionally have $|\lambda_0|<1$.
\medskip

For the converse direction, suppose that there exists $\lambda_0$ such that $\Phi_{f,h,\lambda_0}$ has a fixed point $\xi\in B_1(0;H)$ with $\lambda_0-\xi$ invertible in $\mathscr{D}$. Then $L_{f,h}(\xi)=\lambda_0\beta_{f,h}$. Using Lemma \ref{main 2}, it follows that $x:=\lambda_0-\xi$ is an invertible solution of the mixed gauge equation
\[
\overline\partial x=x\overline\partial f-\overline\partial h\,.\,x\,,
\]
and consequently, $\overline{\nabla}^{(n)}_{\overline\partial f}\equiv_g\overline{\nabla}^{(n)}_{\overline\partial h}$.
\end{proof}

\bigskip

\bibliography{biblio.bib}

@article{KhalkhaliLandiVanSuijlekom2011,
    AUTHOR = {Khalkhali, M. and Landi, G. and van Suijlekom,
              W.},
     TITLE = {Holomorphic structures on the quantum projective line},
   JOURNAL = {Int. Math. Res. Not. IMRN},
  FJOURNAL = {International Mathematics Research Notices. IMRN},
      YEAR = {2011},
    NUMBER = {4},
     PAGES = {851--884},
      ISSN = {1073-7928,1687-0247},
   MRCLASS = {58B34 (16E40 58B32)},
  MRNUMBER = {2773332},
MRREVIEWER = {Snigdhayan\ Mahanta},
       DOI = {10.1093/imrn/rnq097},
       URL = {https://doi.org/10.1093/imrn/rnq097},
}

@article{GravemanLaRueMacArthurPesinWei2025NonStandardHolomorphic,
  title         = {Non-standard {H}olomorphic {S}tructures on {L}ine {B}undles over the {Q}uantum {P}rojective {L}ine},
  author        = {Graveman, M. and La Rue, L. and MacArthur, L. and Pesin, H. and Wei, Z.},
  year          = {2025},
  eprint        = {2510.14263},
  archivePrefix = {arXiv},
  primaryClass  = {math.QA},
  note          = {arXiv:2510.14263v2},
}

@article{AguilarKaad2018PodlesSpectralMetric,
    AUTHOR = {Aguilar, K. and Kaad, J.},
     TITLE = {The {P}odle\'s{} sphere as a spectral metric space},
   JOURNAL = {J. Geom. Phys.},
  FJOURNAL = {Journal of Geometry and Physics},
    VOLUME = {133},
      YEAR = {2018},
     PAGES = {260--278},
      ISSN = {0393-0440,1879-1662},
   MRCLASS = {58B34 (46L30 46L89 58B32)},
  MRNUMBER = {3850270},
MRREVIEWER = {Ferdinand\ Ngakeu},
       DOI = {10.1016/j.geomphys.2018.07.015},
       URL = {https://doi.org/10.1016/j.geomphys.2018.07.015},
}

@book {Connes,
    AUTHOR = {Connes, A.},
     TITLE = {Noncommutative geometry},
 PUBLISHER = {Academic Press, Inc., San Diego, CA},
      YEAR = {1994},
     PAGES = {xiv+661},
      ISBN = {0-12-185860-X},
   MRCLASS = {46Lxx (19K56 22D25 58B30 58G12 81T13 81V22 81V70)},
  MRNUMBER = {1303779},
MRREVIEWER = {John\ Roe},
}

@incollection {DAndreaLandi2013GeometryQuantumProjectiveSpaces,
    AUTHOR = {D'Andrea, F. and Landi, G.},
     TITLE = {Geometry of quantum projective spaces},
 BOOKTITLE = {Noncommutative geometry and physics. 3},
    SERIES = {Keio COE Lect. Ser. Math. Sci.},
    VOLUME = {1},
     PAGES = {373--416},
 PUBLISHER = {World Sci. Publ., Hackensack, NJ},
      YEAR = {2013},
      ISBN = {978-981-4425-00-1},
   MRCLASS = {58B34 (19K99 58J50 81T75)},
  MRNUMBER = {3098599},
       DOI = {10.1142/9789814425018\_0014},
       URL = {https://doi.org/10.1142/9789814425018_0014},
}

@article {DAndreya-Dabrowski,
    AUTHOR = {D'Andrea, F. and D\polhk{a}browski, L.},
     TITLE = {Dirac operators on quantum projective spaces},
   JOURNAL = {Comm. Math. Phys.},
  FJOURNAL = {Communications in Mathematical Physics},
    VOLUME = {295},
      YEAR = {2010},
    NUMBER = {3},
     PAGES = {731--790},
      ISSN = {0010-3616,1432-0916},
   MRCLASS = {58J42 (58B34)},
  MRNUMBER = {2600033},
MRREVIEWER = {Chiara\ Pagani},
       DOI = {10.1007/s00220-010-0989-8},
       URL = {https://doi.org/10.1007/s00220-010-0989-8},
}

@article{KhalkhaliMoatadelro2011NCComplexGeometryQPS,
    AUTHOR = {Khalkhali, M. and Moatadelro, A.},
     TITLE = {Noncommutative complex geometry of the quantum projective
              space},
   JOURNAL = {J. Geom. Phys.},
  FJOURNAL = {Journal of Geometry and Physics},
    VOLUME = {61},
      YEAR = {2011},
    NUMBER = {12},
     PAGES = {2436--2452},
      ISSN = {0393-0440,1879-1662},
   MRCLASS = {58B34 (16E40 16T20 58B32)},
  MRNUMBER = {2838520},
MRREVIEWER = {Chiara\ Pagani},
       DOI = {10.1016/j.geomphys.2011.08.004},
       URL = {https://doi.org/10.1016/j.geomphys.2011.08.004},
}

@article {Mesland-Rennie,
    AUTHOR = {Mesland, B. and Rennie, A.},
     TITLE = {Curvature and {W}eitzenb\"ock formula for the {P}odle\'s{}
              quantum sphere},
   JOURNAL = {Comm. Math. Phys.},
  FJOURNAL = {Communications in Mathematical Physics},
    VOLUME = {406},
      YEAR = {2025},
    NUMBER = {8},
     PAGES = {Paper No. 179, 36},
      ISSN = {0010-3616,1432-0916},
   MRCLASS = {58B34 (46L87)},
  MRNUMBER = {4927808},
       DOI = {10.1007/s00220-025-05348-9},
       URL = {https://doi.org/10.1007/s00220-025-05348-9},
}

@article{Podles1987QuantumSpheres,
    AUTHOR = {Podle\'s, P.},
     TITLE = {Quantum spheres},
   JOURNAL = {Lett. Math. Phys.},
  FJOURNAL = {Letters in Mathematical Physics. A Journal for the Rapid
              Dissemination of Short Contributions in the Field of
              Mathematical Physics},
    VOLUME = {14},
      YEAR = {1987},
    NUMBER = {3},
     PAGES = {193--202},
      ISSN = {0377-9017},
   MRCLASS = {46L55 (22D25 22E65 81D99)},
  MRNUMBER = {919322},
MRREVIEWER = {Ola\ Bratteli},
       DOI = {10.1007/BF00416848},
       URL = {https://doi.org/10.1007/BF00416848},
}

@article {Polishchuk2006AnaloguesExponentialMap,
    AUTHOR = {Polishchuk, A.},
     TITLE = {Analogues of the exponential map associated with complex
              structures on noncommutative two-tori},
   JOURNAL = {Pacific J. Math.},
  FJOURNAL = {Pacific Journal of Mathematics},
    VOLUME = {226},
      YEAR = {2006},
    NUMBER = {1},
     PAGES = {153--178},
      ISSN = {0030-8730,1945-5844},
   MRCLASS = {58B34 (46L87)},
  MRNUMBER = {2247860},
MRREVIEWER = {Matilde\ Marcolli},
       DOI = {10.2140/pjm.2006.226.153},
       URL = {https://doi.org/10.2140/pjm.2006.226.153},
}

@article {MR1695097,
    AUTHOR = {Fr\"ohlich, J. and Grandjean, O. and Recknagel, A.},
     TITLE = {Supersymmetric quantum theory and non-commutative geometry},
   JOURNAL = {Comm. Math. Phys.},
  FJOURNAL = {Communications in Mathematical Physics},
    VOLUME = {203},
      YEAR = {1999},
    NUMBER = {1},
     PAGES = {119--184},
      ISSN = {0010-3616,1432-0916},
   MRCLASS = {58J42 (46L87 46N50 58B34 81R60 81T60 81T75)},
  MRNUMBER = {1695097},
MRREVIEWER = {Alan\ L.\ Carey},
       DOI = {10.1007/s002200050608},
       URL = {https://doi.org/10.1007/s002200050608},
}

@article {MR683171,
    AUTHOR = {Witten, E.},
     TITLE = {Supersymmetry and {M}orse theory},
   JOURNAL = {J. Differential Geom.},
  FJOURNAL = {Journal of Differential Geometry},
    VOLUME = {17},
      YEAR = {1982},
    NUMBER = {4},
     PAGES = {661--692},
      ISSN = {0022-040X,1945-743X},
   MRCLASS = {58G99 (53C99 81G20)},
  MRNUMBER = {683171},
MRREVIEWER = {I.\ Vaisman},
       URL = {http://projecteuclid.org/euclid.jdg/1214437492},
}

@article {Polishchuk-Schwarz,
    AUTHOR = {Polishchuk, A. and Schwarz, A.},
     TITLE = {Categories of holomorphic vector bundles on noncommutative
              two-tori},
   JOURNAL = {Comm. Math. Phys.},
  FJOURNAL = {Communications in Mathematical Physics},
    VOLUME = {236},
      YEAR = {2003},
    NUMBER = {1},
     PAGES = {135--159},
      ISSN = {0010-3616,1432-0916},
   MRCLASS = {58B34 (18E30 32L05)},
  MRNUMBER = {1977884},
MRREVIEWER = {Matilde\ Marcolli},
       DOI = {10.1007/s00220-003-0813-9},
       URL = {https://doi.org/10.1007/s00220-003-0813-9},
}

@article {Polishchuk,
    AUTHOR = {Polishchuk, A.},
     TITLE = {Classification of holomorphic vector bundles on noncommutative
              two-tori},
   JOURNAL = {Doc. Math.},
  FJOURNAL = {Documenta Mathematica},
    VOLUME = {9},
      YEAR = {2004},
     PAGES = {163--181},
      ISSN = {1431-0635,1431-0643},
   MRCLASS = {58B34 (16E30 46L87 46M18)},
  MRNUMBER = {2054986},
MRREVIEWER = {Mauro\ Spera},
}

@article {Buachalla,
    AUTHOR = {\'O{} Buachalla, R.},
     TITLE = {Noncommutative {K}\"ahler structures on quantum homogeneous
              spaces},
   JOURNAL = {Adv. Math.},
  FJOURNAL = {Advances in Mathematics},
    VOLUME = {322},
      YEAR = {2017},
     PAGES = {892--939},
      ISSN = {0001-8708,1090-2082},
   MRCLASS = {58B34 (16T05 81R50 81R60)},
  MRNUMBER = {3720811},
MRREVIEWER = {Chiara\ Pagani},
       DOI = {10.1016/j.aim.2017.09.031},
       URL = {https://doi.org/10.1016/j.aim.2017.09.031},
}

@article {satyajit,
    AUTHOR = {Guin, S.},
     TITLE = {Noncommutative {K}\"ahler structure on {$C^*$}-dynamical
              systems},
   JOURNAL = {J. Geom. Phys.},
  FJOURNAL = {Journal of Geometry and Physics},
    VOLUME = {146},
      YEAR = {2019},
     PAGES = {103492, 24},
      ISSN = {0393-0440,1879-1662},
   MRCLASS = {58B34 (37A55 46L87)},
  MRNUMBER = {3995285},
       DOI = {10.1016/j.geomphys.2019.103492},
       URL = {https://doi.org/10.1016/j.geomphys.2019.103492},
}

@article {Schmudgen-Wagner,
    AUTHOR = {Schm\"udgen, K. and Wagner, E.},
     TITLE = {Dirac operator and a twisted cyclic cocycle on the standard
              {P}odle\'s{} quantum sphere},
   JOURNAL = {J. Reine Angew. Math.},
  FJOURNAL = {Journal f\"ur die Reine und Angewandte Mathematik. [Crelle's
              Journal]},
    VOLUME = {574},
      YEAR = {2004},
     PAGES = {219--235},
      ISSN = {0075-4102,1435-5345},
   MRCLASS = {58B34 (58B32 58J42)},
  MRNUMBER = {2099116},
MRREVIEWER = {Partha\ Sarathi\ Chakraborty},
       DOI = {10.1515/crll.2004.072},
       URL = {https://doi.org/10.1515/crll.2004.072},
}

@article {Neshveyev,
    AUTHOR = {Neshveyev, S. and Tuset, L.},
     TITLE = {A local index formula for the quantum sphere},
   JOURNAL = {Comm. Math. Phys.},
  FJOURNAL = {Communications in Mathematical Physics},
    VOLUME = {254},
      YEAR = {2005},
    NUMBER = {2},
     PAGES = {323--341},
      ISSN = {0010-3616,1432-0916},
   MRCLASS = {58J42 (46L87 58B34 58J22)},
  MRNUMBER = {2117628},
MRREVIEWER = {Yuri\ A.\ Kordyukov},
       DOI = {10.1007/s00220-004-1154-z},
       URL = {https://doi.org/10.1007/s00220-004-1154-z},
}

@incollection {Dabrowski-Sitarz,
    AUTHOR = {D\polhk{a}browski, L. and Sitarz, A.},
     TITLE = {Dirac operator on the standard {P}odle\'s{} quantum sphere},
 BOOKTITLE = {Noncommutative geometry and quantum groups ({W}arsaw, 2001)},
    SERIES = {Banach Center Publ.},
    VOLUME = {61},
     PAGES = {49--58},
 PUBLISHER = {Polish Acad. Sci. Inst. Math., Warsaw},
      YEAR = {2003},
   MRCLASS = {58J42 (58B34 81R60)},
  MRNUMBER = {2024421},
MRREVIEWER = {Alexander\ Gorokhovsky},
       DOI = {10.4064/bc61-0-4},
       URL = {https://doi.org/10.4064/bc61-0-4},
}

@article {Rieffel,
    AUTHOR = {Rieffel, M. A.},
     TITLE = {Gromov-{H}ausdorff distance for quantum metric spaces},
   JOURNAL = {Mem. Amer. Math. Soc.},
  FJOURNAL = {Memoirs of the American Mathematical Society},
    VOLUME = {168},
      YEAR = {2004},
    NUMBER = {796},
     PAGES = {1--65},
      ISSN = {0065-9266,1947-6221},
   MRCLASS = {46L87 (53C23 58B34 60B10)},
  MRNUMBER = {2055927},
       DOI = {10.1090/memo/0796},
       URL = {https://doi.org/10.1090/memo/0796},
}

@book {Klimyk,
    AUTHOR = {Klimyk, A. and Schm\"udgen, K.},
     TITLE = {Quantum groups and their representations},
    SERIES = {Texts and Monographs in Physics},
 PUBLISHER = {Springer-Verlag, Berlin},
      YEAR = {1997},
     PAGES = {xx+552},
      ISBN = {3-540-63452-5},
   MRCLASS = {17B37 (16W30 17B81 46L87 58B30 81R50)},
  MRNUMBER = {1492989},
MRREVIEWER = {Ian\ M.\ Musson},
       DOI = {10.1007/978-3-642-60896-4},
       URL = {https://doi.org/10.1007/978-3-642-60896-4},
}

@article {Woronowicz,
    AUTHOR = {Woronowicz, S. L.},
     TITLE = {Twisted {${\rm SU}(2)$} group. {A}n example of a
              noncommutative differential calculus},
   JOURNAL = {Publ. Res. Inst. Math. Sci.},
  FJOURNAL = {Kyoto University. Research Institute for Mathematical
              Sciences. Publications},
    VOLUME = {23},
      YEAR = {1987},
    NUMBER = {1},
     PAGES = {117--181},
      ISSN = {0034-5318,1663-4926},
   MRCLASS = {46L99 (22D25 22E45 46L80 58H05)},
  MRNUMBER = {890482},
MRREVIEWER = {Marc\ Rosso},
       DOI = {10.2977/prims/1195176848},
       URL = {https://doi.org/10.2977/prims/1195176848},
}

@article {Connes-Cuntz,
    AUTHOR = {Connes, A. and Cuntz, J.},
     TITLE = {Quasi homomorphismes, cohomologie cyclique et positivit\'e},
   JOURNAL = {Comm. Math. Phys.},
  FJOURNAL = {Communications in Mathematical Physics},
    VOLUME = {114},
      YEAR = {1988},
    NUMBER = {3},
     PAGES = {515--526},
      ISSN = {0010-3616,1432-0916},
   MRCLASS = {46L80 (19K99 58B15 58G12)},
  MRNUMBER = {929142},
MRREVIEWER = {Jonathan\ M.\ Rosenberg},
       URL = {http://projecteuclid.org/euclid.cmp/1104160692},
}

@article {Majid,
    AUTHOR = {Majid, S.},
     TITLE = {Noncommutative {R}iemannian and spin geometry of the standard
              {$q$}-sphere},
   JOURNAL = {Comm. Math. Phys.},
  FJOURNAL = {Communications in Mathematical Physics},
    VOLUME = {256},
      YEAR = {2005},
    NUMBER = {2},
     PAGES = {255--285},
      ISSN = {0010-3616,1432-0916},
   MRCLASS = {58B32 (58B34 81R50)},
  MRNUMBER = {2160795},
MRREVIEWER = {Ferdinand\ Ngakeu},
       DOI = {10.1007/s00220-005-1295-8},
       URL = {https://doi.org/10.1007/s00220-005-1295-8},
}

@article {Podles2,
    AUTHOR = {Podle\'s, P.},
     TITLE = {Differential calculus on quantum spheres},
   JOURNAL = {Lett. Math. Phys.},
  FJOURNAL = {Letters in Mathematical Physics},
    VOLUME = {18},
      YEAR = {1989},
    NUMBER = {2},
     PAGES = {107--119},
      ISSN = {0377-9017,1573-0530},
   MRCLASS = {58B30 (17B37 46L89 58G25)},
  MRNUMBER = {1010990},
       DOI = {10.1007/BF00401865},
       URL = {https://doi.org/10.1007/BF00401865},
}

@article {BM1,
    AUTHOR = {Brzezi\'nski, T. and Majid, S.},
     TITLE = {Quantum group gauge theory on quantum spaces},
   JOURNAL = {Comm. Math. Phys.},
  FJOURNAL = {Communications in Mathematical Physics},
    VOLUME = {157},
      YEAR = {1993},
    NUMBER = {3},
     PAGES = {591--638},
      ISSN = {0010-3616,1432-0916},
   MRCLASS = {58B30 (16W30 17B37 81R50)},
  MRNUMBER = {1243712},
MRREVIEWER = {Pavel\ \v S\v tov\'i\v cek},
       URL = {http://projecteuclid.org/euclid.cmp/1104254023},
}

@article {BM2,
    AUTHOR = {Brzezi\'nski, T. and Majid, S.},
     TITLE = {Quantum differentials and the {$q$}-monopole revisited},
   JOURNAL = {Acta Appl. Math.},
  FJOURNAL = {Acta Applicandae Mathematicae},
    VOLUME = {54},
      YEAR = {1998},
    NUMBER = {2},
     PAGES = {185--232},
      ISSN = {0167-8019,1572-9036},
   MRCLASS = {58B32 (20G42 81R50)},
  MRNUMBER = {1660923},
MRREVIEWER = {Albert\ Jeu-Liang\ Sheu},
       DOI = {10.1023/A:1006053806824},
       URL = {https://doi.org/10.1023/A:1006053806824},
}

@book {Sakai,
    AUTHOR = {Sakai, Sh\^oichir\^o},
     TITLE = {Operator algebras in dynamical systems},
    SERIES = {Encyclopedia of Mathematics and its Applications},
    VOLUME = {41},
      NOTE = {The theory of unbounded derivations in $C^*$-algebras},
 PUBLISHER = {Cambridge University Press, Cambridge},
      YEAR = {1991},
     PAGES = {xii+219},
      ISBN = {0-521-40096-1},
   MRCLASS = {46L57 (46-02 46L60 47D45 82B10 82B26)},
  MRNUMBER = {1136257},
MRREVIEWER = {Palle\ E. T. Jorgensen},
       DOI = {10.1017/CBO9780511662218},
       URL = {https://doi.org/10.1017/CBO9780511662218},
}

@article {Landi,
    AUTHOR = {Landi, G.},
     TITLE = {Equivariant and holomorphic bundles on the quantum projective
              line},
   JOURNAL = {Int. J. Geom. Methods Mod. Phys.},
  FJOURNAL = {International Journal of Geometric Methods in Modern Physics},
    VOLUME = {9},
      YEAR = {2012},
    NUMBER = {2},
     PAGES = {1260001, 8},
      ISSN = {0219-8878,1793-6977},
   MRCLASS = {58B34 (46L52 58B12)},
  MRNUMBER = {2913139},
MRREVIEWER = {Alan\ L.\ Carey},
       DOI = {10.1142/S0219887812600018},
       URL = {https://doi.org/10.1142/S0219887812600018},
}

@article {Kaad-Kyed,
    AUTHOR = {Aguilar, K. and Kaad, J. and Kyed, D.},
     TITLE = {The {P}odle\'s{} spheres converge to the sphere},
   JOURNAL = {Comm. Math. Phys.},
  FJOURNAL = {Communications in Mathematical Physics},
    VOLUME = {392},
      YEAR = {2022},
    NUMBER = {3},
     PAGES = {1029--1061},
      ISSN = {0010-3616,1432-0916},
   MRCLASS = {46L67 (58B32)},
  MRNUMBER = {4426737},
       DOI = {10.1007/s00220-022-04363-4},
       URL = {https://doi.org/10.1007/s00220-022-04363-4},
}

@article {Beggs-Smith,
    AUTHOR = {Beggs, E. and Paul Smith, S.},
     TITLE = {Non-commutative complex differential geometry},
   JOURNAL = {J. Geom. Phys.},
  FJOURNAL = {Journal of Geometry and Physics},
    VOLUME = {72},
      YEAR = {2013},
     PAGES = {7--33},
      ISSN = {0393-0440,1879-1662},
   MRCLASS = {17D99 (14A22 32C35 32Q60 53C56 58B34)},
  MRNUMBER = {3073899},
MRREVIEWER = {Andrey\ Yu.\ Lazarev},
       DOI = {10.1016/j.geomphys.2013.03.018},
       URL = {https://doi.org/10.1016/j.geomphys.2013.03.018},
}

\end{document}